\documentclass[a4paper,10pt]{article}
\usepackage{amssymb}
\usepackage{amsthm}
\usepackage{amsmath}
\usepackage{anysize}
\usepackage{url}
\usepackage{color}
\usepackage{subfigure,graphicx}
\usepackage{overpic}


\newcommand{\bitem}{\begin{itemize}}
\newcommand{\eitem}{\end{itemize}}
\newcommand{\beq}{\begin{equation}}
\newcommand{\eeq}{\end{equation}}
\newcommand{\beqn}{\begin{eqnarray*}}
\newcommand{\eeqn}{\end{eqnarray*}}

\newcommand{\cE}{{\cal E}}

\newcommand{\argmin}{\mbox{argmin}}

\newcommand{\norm}[1]{\|#1\|}

\def\RR{\mathbb{R}}

\def\epsilon{\varepsilon}

\newtheorem{theorem}{Theorem}[section]
\newtheorem{lemma}[theorem]{Lemma}

\newtheorem{definition}[theorem]{Definition}
\newtheorem{remark}[theorem]{Remark}

\newtheorem{prop}[theorem]{Proposition}
\newtheorem*{mtheorem}{Meta-Theorem}

\hyphenation{wave-let}

\author{Philipp Grohs and Gitta Kutyniok}
\title{Parabolic Molecules\\
}
\begin{document}
\maketitle
\begin{abstract}
Anisotropic decompositions using representation systems based on parabolic scaling such as curve\-lets or shearlets
have recently attracted significantly increased attention due to the fact that they were shown to provide
optimally sparse approximations of functions exhibiting singularities on lower dimensional embedded manifolds.
The literature now contains various direct proofs of this fact and of related sparse approximation results.
However, it seems quite cumbersome to prove such a canon of results for each system separately, while many
of the systems exhibit certain similarities.

In this paper, with the introduction of the notion of {\em parabolic molecules}, we aim to provide
a comprehensive framework which includes customarily employed representation systems based on parabolic scaling
such as curvelets and shearlets. It is shown that
pairs of parabolic molecules have the fundamental property to be almost orthogonal in a particular sense.
This result is then applied to analyze parabolic molecules with respect to their ability to sparsely
approximate data governed by anisotropic features. For this, the concept of {\em sparsity equivalence}
is introduced which is shown to allow the identification of a large class of parabolic molecules
providing the same sparse approximation results as curvelets and shearlets. Finally, as another
application, smoothness spaces associated with parabolic molecules are introduced providing a general
theoretical approach which even leads to novel results for, for instance, compactly supported shearlets.
\end{abstract}
\begin{center}{\it Keywords: }{Curvelets, Nonlinear Approximation, Parabolic Scaling, Shearlets, Smoothness
Spaces, Sparsity Equivalence}
\end{center}
\begin{center}{\bf 2000 Mathematics Subject Classification.} Primary 41AXX, Secondary 41A25, 53B, 22E.
\end{center}


\section{Introduction}

Recently, a paradigm shift could be observed in applied mathematics, computer science, and electrical engineering.
The novel paradigm of sparse approximations now enables not only highly efficient encoding of functions
and signals, but also provides intriguing new methodologies, for instance, for recovery of missing data or
separation of morphologically distinct components. At about the same time, scientists began to question whether
wavelets are indeed perfectly suited for image processing tasks, the main reason being that images are governed
by edges while wavelets are isotropic objects. This mismatch becomes also evident when recalling that Besov
spaces can be characterized by the decay of wavelet coefficient sequences however Besov models are clearly deficient
to adequate capturing of edges.

\subsection{Geometric Multiscale Analysis}

These two fundamental observations have led to the research area of geometric multiscale analysis whose
main goal is to develop representation systems, preferably containing different scales, which are sensitive
to anisotropic features in functions/signals and provide sparse approximations of those. Such representation
systems are typically based on {\em parabolic scaling}, and we exemplarily mention (first and second generation)
curvelets  \cite{CD04}, contourlets \cite{DV05}, and shearlets \cite{KL12}. Browsing through
the literature, it becomes evident that some properties such as sparse approximation of so-called cartoon
images are quite similar for some systems such as curvelets and shearlets, whereas other systems such as
contourlets show a somehow different behavior. Delving more into the literature we observe that for those systems
exhibiting similar sparsity behavior many results were proven with quite resembling proofs. One might ask:
Is this cumbersome close repetition of proofs really necessary? We believe that the answer is {\em no} and
that a general framework for representation systems based on parabolic scaling does not only solve this problem
but even more provide a fundamental understanding of such systems and allow for a categorization of these.

\subsection{Parabolic molecules}

The main goal of this paper is to proclaim the framework of parabolic molecules as a general concept
encompassing in particular curvelets and both band-limited and compactly supported shearlets. The
idea of {\em molecules} in geometric multiscale analysis dates back to the seminal work by Cand\`{e}s and
Demanet \cite{CD02}, in which they studied the curvelet representation of wave propagators by using
what they called {\em curvelet molecules}. Later, Guo and Labate adopted this idea and introduced
{\em shearlet molecules} in \cite{GL08}.

Both such generalization approaches however suffer from the fact that they are solely designed to
weaken the conditions of the particular respective systems, namely curvelets and shearlets. In contrast
to this, our philosophy is to introduce molecules, which encompass a wide class of directional
representation systems by using parabolic scaling as a unifying concept. This is justified by
the fact that all known systems providing optimally sparse approximation of cartoon images follow
a parabolic scaling law; and it is strongly believed that this is necessary. In fact, our
framework is general enough to, for instance, include all known curvelet-type as well as shearlet-type
constructions to date.

Our main result (Theorem \ref{thm:almostorth}) will show that the Gramian matrix between any two systems
of parabolic molecules satisfies a strong off-diagonal decay property and is in that sense very close to
a diagonal matrix. This will become key to transfer the celebrated properties of curvelet systems to
other systems based on parabolic scaling; a fact which we can summarize in the following meta-result:

\begin{mtheorem}
    All frame systems based on parabolic scaling (specifically curvelets and shearlets) posses the exact same approximation
    properties, whenever the generating functions are sufficiently smooth, as well as localized in space and frequency.
\end{mtheorem}

This meta-theorem has been verified on a case-by-case basis for a number of different systems. In this paper,
for the first time, a rigorous framework is provided which applies to, for instance, all known curvelet
or shearlet constructions at once. This will be exemplarily demonstrated by the results on sparse approximation
(Theorem \ref{theo:mainsparsity}) and anisotropic smoothness spaces (Theorem \ref{thm:normequiv}) which are
indeed universally applicable to all parabolic molecules.

\subsection{Sparsity Equivalence}

Focussing on the property of sparse approximation of images governed by anisotropic features, it might
be even more beneficial to derive a categorization of parabolic molecules according to their approximation
behavior. We accommodate this request by introducing the notion of {\em sparsity equivalence} in Subsection
\ref{subsec:sparse}, which leads to equivalence classes and further to the sought classification.
As a byproduct, our framework yields a simple derivation of the results in \cite{GL07,Kutyniok2010}
from \cite{CD04}. In fact, our results provide a systematic way to analyze the sparse approximation
of cartoon images of systems by elements of the class of parabolic molecules categorized by equivalence
classes of sparsity equivalence.

\subsection{Contribution and Expected Impact}

Summarizing, the significance of the notion of parabolic molecules as a higher level viewpoint lies in the fact that it
not only provides a general framework which contains various directional representation systems as special
cases and enables a quantitative comparison of such, but it moreover allows the transfer of results concerning
properties of such systems without repeating quite similar proofs. A few examples, for which this conceptually
new approach is fruitful, will be presented in Section \ref{sec:appl} including optimally sparse
approximations of cartoon-like images.

We therefore anticipate this novel framework to have the following impacts:
\bitem
\item A thorough understanding of the ingredients of representation systems based on parabolic scaling which
are crucial for an observed behavior such as sparse approximations of cartoon images, thereby also
categorizing different (sparsity) behaviors.
\item A framework within which results can be directly transferred from one system to others without
repetition of similar proofs. This will allow to establish a desired result for a system based on parabolic
scaling by choosing, for instance, a shearlet or curvelet system best suited for the proof, and transfer the
result subsequently to any other systems by utilizing the results in this paper.
\item An approach to design new representation systems based on parabolic scaling depending on several
parameters whose impact on, for instance, sparse approximation behavior is then known in advance.
\eitem

\subsection{Extensions}

The framework introduced in this paper and the derived results are amenable to generalizations and extensions.
We briefly discuss a few examples.

\bitem
\item {\em Other Systems}. This general framework supports the introduction of novel systems based on
parabolic scaling fulfilling a list of desiderata designed according to a particular application. Such systems
can now be objectively compared with other systems according to, for instance, sparse approximation properties.
\item {\em Systems with Continuous Parameters}. One might also ask whether a similar general framework
for systems based on parabolic scaling with continuous parameters can be introduced. In light of Subsection \ref{subsec:sparse},
this however requires a different sparsity model; one conceivable path would be to compare their ability to
resolve wavefront sets.
\item {\em Further Properties}. In this paper, we studied the impact of our general framework on the
problems of sparse approximation and anisotropic function spaces. This strategy can certainly be also
used for other applications such as efficient decomposition of the Radon transform, which has been
studied both for shearlets \cite{CBGL10} and curvelets \cite{CD00b}, as well as the analysis
of geometric separation as studied in \cite{Donoho2010c}.
\item {\em Weighted Norms}. When aiming at transferring results such as sparse decompositions of curvilinear
integrals \cite{CD00a} or sparse decompositions of the Radon transform \cite{CD00b}, sometimes weighted
$\ell_p$ norms might need to be analyzed.
\item {\em Higher Dimensions}. We have formulated our results in the bivariate setting. However, an
extension to arbitrary dimensions is possible using essentially the same arguments. This is especially
relevant since by now several different curvelet and shearlet constructions exist for three-dimensional
data \cite{Ying2005,Kutyniok2012,Guo2010}.
\eitem

\subsection{Outline}

This paper is organized as follows. In Section \ref{sec:paramol}, the notion of parabolic molecules
is introduced. It is then shown in Section \ref{sec:exam} that curvelets and both band-limited and
compactly supported shearlets are special cases of this framework. Almost orthogonality of pairs
of parabolic molecules is proven in Section \ref{sec:proof}. Finally, in Section \ref{sec:appl},
this result is utilized for two applications. First, in Subsection \ref{subsec:sparse}, using
the novel concept of sparsity equivalence a large class of parabolic molecules providing the same
sparse approximation results as curvelets and shearlets is identified. Second, in Subsection
\ref{sec:functionspaces}, smoothness spaces associated with parabolic molecules are studied.

\subsection{Notation}
We comment on the notation which we shall use in the present work. Denote by
$L_p(\mathbb{R}^d)$ the usual Lebesgue spaces with associated norm $\|\cdot \|_p$.
For a discrete set $\Lambda$ equipped with the counting measure we denote the corresponding
Lebesgue space by $\ell_p(\Lambda)$ or $\ell_p$ if $\Lambda$ is known from the context. The associated
norm will again be denoted $\|\cdot \|_p$.
We use the symbol $\langle \cdot , \cdot \rangle$ indiscriminately for the inner product on the
Hilbert space $L_2(\mathbb{R}^d)$ as well as for the Euclidean inner product on $\mathbb{R}^d$.
The Euclidean norm $\langle x , x\rangle^{1/2}$ of a vector $x\in \mathbb{R}^d$ will be denoted
by $|x|$.
For a function $f\in L_1(\mathbb{R}^d)$ we can define
the Fourier transform $\hat f(\omega):=\int_{\mathbb{R}^d}f(x)\exp(-2\pi i \langle x,\omega\rangle)\mathrm{d}x$.
By density this definition can be extended to tempered distributions $f$.
We shall also use the notation $\mathbb{T}$ to denote the one-dimensional torus which can be identified
with the half-open interval $[0,2\pi)$. Sometimes we will use the notations $(x)_+:=\max(x,0)$
$\lfloor x \rfloor :=\max\{l\in \mathbb{Z}: l\le x\}$, and
$\langle x \rangle := (1 + x^2 ) ^{1/2}$. Finally, we use the symbol $A\lesssim B$ to indicate
that $A\le CB$ with a uniform constant $C$.

\section{Parabolic Molecules}
\label{sec:paramol}
All anisotropic transforms based on parabolic scaling which have appeared in the literature
are indexed by a scale parameter describing the amount of anisotropic scaling,
an angular parameter describing the orientation and a spacial parameter describing the location of an element.
Nevertheless, these specific constructions are based on different principles: For curvelets the
scaling is done by a dilation with respect to polar coordinates and the orientation is enforced by rotations.
Shearlets on the other hand are based on affine scaling of a single generator and the directionality
is generated by the action of shear matrices.

It is the purpose of this section to introduce the concept of parabolic molecules which
distills the essential properties out of these constructions in terms of time-frequency localization
properties. As it will turn out, all previous constructions of curvelets and shearlets are
instances of this concept, a fact that enables us to operate in a much more general setup than
in previous work.
%
\subsection{Definition of Parabolic Molecules}
%
Let us now describe our setup. We start by defining our parameter space
$$
    \mathbb{P}:= \mathbb{R}_+\times\mathbb{T}\times \mathbb{R}^2,
$$
where a point $p = (s,\theta,x)\in \mathbb{P}$ describes
a scale $2^s$, an orientation $\theta$, and a location $x$.

Parabolic molecules are defined as systems of functions $(m_{\lambda})_{\lambda\in \Lambda}$
with each $m_{\lambda}\in L_2(\mathbb{R}^2)$ satisfying some additional properties.
In particular, each function $m_{\lambda}$ will be associated with a unique point in $\mathbb{P}$, as we shall
make precise below.
Since we are dealing with discrete systems (frames) we would like to
operate with discrete sampling sets contained in $\mathbb{P}$.
We call such sampling sets \emph{parametrizations} as defined below.
\begin{definition}A \emph{parametrization} consists of a
    pair $(\Lambda,\Phi_\Lambda)$
    where $\Lambda$ is a discrete index set and $\Phi_\Lambda$ is a mapping
    $$
        \Phi_\Lambda:\left\{\begin{array}{ccc}\Lambda &\to & \mathbb{P},\\
        \lambda \in\Lambda & \mapsto & \left(s_\lambda , \theta_\lambda , x_\lambda\right).
        \end{array}\right.
    $$
    which associate with each $\lambda\in \Lambda$ a \emph{scale} $s_\lambda$,
    a \emph{direction} $\theta_\lambda$ and a \emph{location} $x_\lambda\in \mathbb{R}^2$.

    There exists a \emph{canonical parametrization}
    $$
        \Lambda^0:=\left\{ (j,l,k)\in \mathbb{Z}^4\ :\
        j\geq 0,\ l  = - 2^{\lfloor \frac{j}{2}\rfloor-1}, \cdots
        , 2^{\lfloor \frac{j}{2}\rfloor-1}\right\},
    $$
    where for $\lambda = (j,l,k)$ we define $\Phi^0(\lambda):=(s_\lambda,\theta_\lambda,x_\lambda)$
    with
    $s_\lambda  := j$, $\theta_\lambda:=l 2^{-\lfloor j/2\rfloor }\pi$, and
    $x_\lambda := R_{-\theta_\lambda}D_{2^{-s_\lambda}}k$.
\end{definition}

We are now ready to define parabolic molecules. Our definition
essentially says that molecules have frequency support in parabolic
wedges associated to a certain orientation and spacial support in
rectangles with parabolic aspect ratio.

For this, we will use the following notion. We let
$R_\theta = \left(\begin{array}{cc}\cos(\theta)&-\sin(\theta)\\\sin(\theta)&\cos(\theta)\end{array}\right)$
denote the rotation matrix of angle $\theta$,
and $D_a:=\mbox{diag} ( a, \sqrt{a})$ be the anisotropic dilation matrix associated with $a>0$.
\begin{definition}Let $\Lambda$ be a parametrization.
    A family $(m_\lambda)_{\lambda \in \Lambda}$ is called a \emph{family of parabolic molecules} of
    order $(R,M,N_1,N_2)$ if
    it can be written as
    $$
        m_\lambda (x) =
        2^{3s_\lambda/4}
        a^{(\lambda)}
        \left(D_{2^{s_\lambda}}R_{\theta_\lambda}\left(x - x_\lambda\right)\right)
    $$
    such that
    \begin{equation}\label{eq:molcond}
        \left| \partial^\beta \hat a^{(\lambda)}(\xi)\right|
        \lesssim \min\left(1,2^{-s_\lambda} + |\xi_1| + 2^{-s_\lambda/2}|\xi_2|\right)^M
        \left\langle |\xi|\right\rangle^{-N_1} \langle \xi_2 \rangle^{-N_2}
    \end{equation}
    for all $|\beta|\le R$.
    The implicit constants are uniform over $\lambda\in \Lambda$.
\end{definition}
\begin{remark}
    For convenience our definition only poses conditions on the Fourier transform of $m_\lambda$.
    The number $R$ describes the spatial localization, $M$ the number of directional (almost)
    vanishing moments and $N_1,N_2$ describe the smoothness of an element $m_\lambda$.
    We also refer to Figure \ref{fig:molfreqsupport} for an illustration of the approximate frequency
    support of a parabolic molecule.
\end{remark}
\begin{figure}[ht]
\centering
\subfigure{
\includegraphics[width = .45\textwidth]{./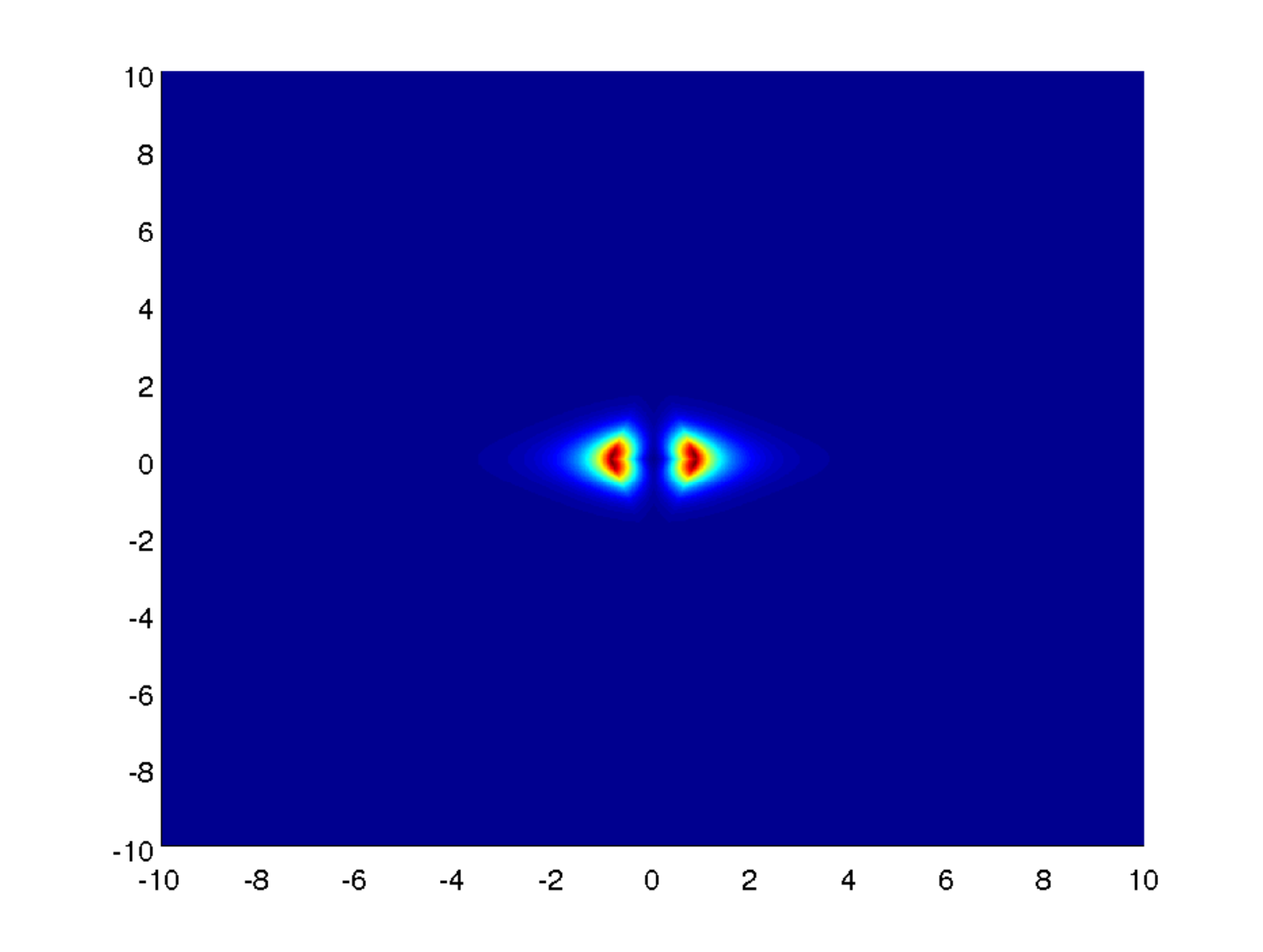}
}
\subfigure{
\includegraphics[width = .45\textwidth]{./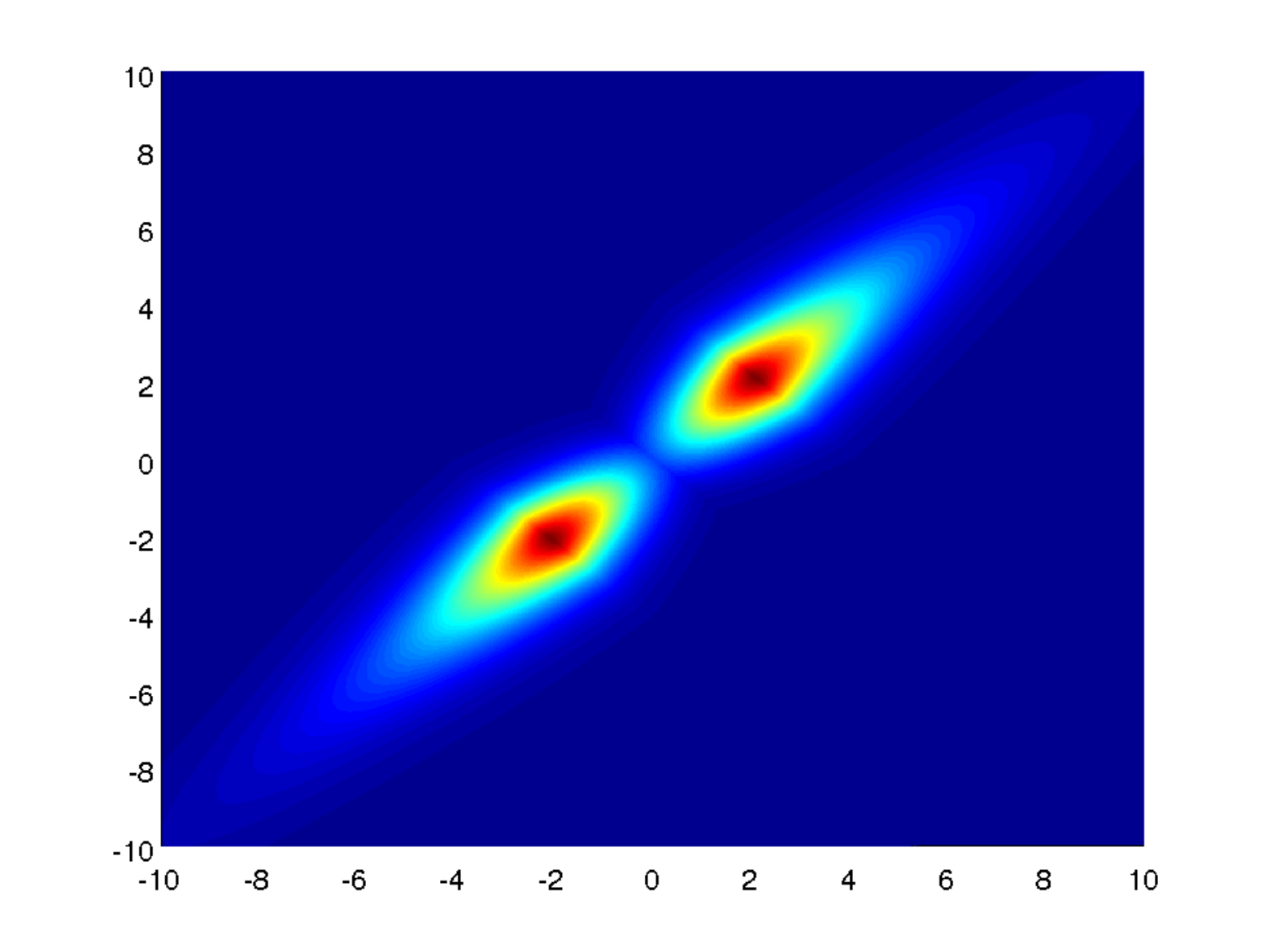}
}
\caption{Left: The weight function $\min
\left(1,2^{-s_\lambda} + |\xi_1| + 2^{-s_\lambda/2}|\xi_2|\right)^M
\left\langle |\xi|\right\rangle^{-N_1} \langle \xi_2 \rangle^{-N_2}$
for $s_\lambda = 3$, $M = 3$, $N_1=N_2=2$. Right: Approximate Frequency
support of a corresponding molecule $\hat{m}_{\lambda}$ with $\theta_\lambda = \pi/4$.}
\label{fig:molfreqsupport}
\end{figure}
We pause to record the following simple estimates:
In polar coordinates we have the representation
\begin{equation*}
    \hat m_{\lambda}(r,\varphi) = 2^{-3s_\lambda / 4}\hat a^{(\lambda)}\left(
    2^{-s_\lambda} r \cos(\varphi + \theta_\lambda),2^{-s_\lambda/2}r
    \sin(\varphi + \theta_\lambda)\right)\exp\left(2\pi i \langle x_\lambda, \xi\rangle\right).
\end{equation*}
Equation (\ref{eq:molcond}) directly implies that in polar coordinates we have the estimate
\begin{equation}\label{eq:moldecaypolar}
    \left|\hat m_{\lambda}(\xi)\right|
    \lesssim  2^{-2s_\lambda/4}\min\left(1, 2^{-s_\lambda}( 1 + r )\right)^M
    \left\langle 2^{-s_\lambda}r\right \rangle^{-N_1}
    \langle 2^{-s_\lambda/2}r\sin(\varphi + \theta_\lambda)\rangle^{-N_2}.
\end{equation}
%
%
\subsection{Metric Properties of Parametrizations}
%
In order to proceed we need to introduce some additional (metric) properties of index sets and
parametrizations.
The parameter space $\mathbb{P}$ can be equipped with a natural notion of (pseudo) distance,
see \cite{Smith1998a}, which can be extended to a distance between indices
by a pullback via a parametrization.
\begin{definition} \label{defi:indexdistance}
    Following \cite{CD02,Smith1998a}, we define for two indices $\lambda,\mu$
    the \emph{index distance}
    $$
        \omega \left(\lambda,\mu\right):=2^{\left|s_\lambda - s_{\mu}\right|}
        \left(
        1 + 2^{s_{\lambda_0}}d\left(\lambda,\mu\right)\right),
    $$
    and
    $$
        d\left(\lambda,\mu \right):=|\theta_\lambda - \theta_{\mu}|^2 + |x_\lambda
         - x_{\mu}|^2 +
        |\langle e_\lambda , x_\lambda - x_{\mu}\rangle|.
    $$
    where $\lambda_0 = \mbox{argmin}(s_\lambda,s_{\mu})$
    and $e_\lambda = \left(\cos(\theta_\lambda),\sin(\theta_\lambda)\right)^\top$.
\end{definition}
\begin{remark}
    The notation $\omega(\lambda,\mu)$ is actually a slight abuse of notation since
    $\omega$ is acting on $\mathbb{P}$. Therefore it should read
    $$
        \omega\left(\Phi_\Lambda(\lambda), \Phi_M(\mu)\right)
    $$
    for indices $\lambda\in \Lambda$, $\mu\in M$ with
    associated parametrizations $\Phi_\Lambda$, $\Phi_M$.
    In order not to overload the notation we stick with the shorter but slightly less accurate definition.
\end{remark}
\begin{remark}
    We wish to mention that, in fact, real-valued curvelets or shearlets are not associated
    with an angle but with a ray, i.e., $\theta$ and $\theta + \pi$ need to be identified.
    This is not reflected in the above definition, which is a slight inaccuracy. The 'correct'
    definition should assume that $|\theta_\lambda|\le \frac{\pi}{2} \in \mathbb{P}^1$, the
    projective line. Therefore, it should read
    $$
        d\left(\lambda,\mu \right):=|\{\theta_\lambda - \theta_{\mu}\}|^2 + |x_\lambda
         - x_{\mu}|^2 +
        |\langle \theta_\lambda , x_\lambda - x_{\mu}\rangle|
    $$
    with
    $\{\varphi \}$ the projection of $\varphi$ onto $\mathbb{P}^1\cong (-\pi/2 , \pi/2]$.

    However, for our results it will make no difference which definition is used. Thus we
    decided to employ Definition \ref{defi:indexdistance}, which avoids additional
    technicalities.
\end{remark}
We need to impose further conditions on an index set $\Lambda$ in order to arrive at meaningful results.
The following definition formalizes a crucial property, which is later on required to be satisfied by an
index set in our results.
\begin{definition}\label{def:kadmiss}
    An index set $\Lambda$ with associated mapping $\Phi_\Lambda$ is called
    \emph{$k$-admissible}
    if
    \begin{equation*}
        \sup_{\lambda\in \Lambda}\sum_{\mu\in\Lambda^0}
        \omega\left(\lambda,\mu\right)^{-k} <\infty,
    \end{equation*}
    and
    \begin{equation*}
        \sup_{\lambda\in \Lambda^0}\sum_{\mu\in\Lambda}
        \omega\left(\lambda,\mu\right)^{-k} <\infty.
    \end{equation*}
\end{definition}
\begin{lemma}\label{lem:canonadmin}
 The canonical index set $\Lambda^0$ is $k$-admissible for all $k>2$.
\end{lemma}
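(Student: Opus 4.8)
The plan is to verify the two supremum conditions of Definition \ref{def:kadmiss} directly for $\Lambda^0$, exploiting its explicit self-indexing structure. Since $\Lambda = \Lambda^0$ here, both conditions coincide up to the roles of $\lambda$ and $\mu$, so by the symmetry of $\omega$ in its scale difference and distance it suffices to bound
$$
    \sup_{\lambda\in \Lambda^0}\sum_{\mu\in\Lambda^0}\omega(\lambda,\mu)^{-k}
    = \sup_{\lambda\in \Lambda^0}\sum_{\mu\in\Lambda^0}
    2^{-k|s_\lambda - s_\mu|}\bigl(1 + 2^{s_{\lambda_0}}d(\lambda,\mu)\bigr)^{-k}.
$$
First I would fix $\lambda = (j,l,k)$ and organize the sum over $\mu = (j',l',k')$ as an outer sum over the scale $j'\ge 0$ and inner sums over the admissible directional indices $l'$ and translation indices $k'$. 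The factor $2^{-k|j-j'|}$ provides geometric decay in the scale variable, so the crux is to show that for each fixed pair of scales the inner sum over $(l',k')$ is bounded by a polynomial in $2^{|j-j'|}$ that is absorbed by this geometric factor once $k>2$.

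For the directional sum, the key input is the spacing of the angles: at scale $j'$ the directions $\theta_{\mu} = l'2^{-\lfloor j'/2\rfloor}\pi$ are equispaced with gap $\asymp 2^{-\lfloor j'/2\rfloor}$, so that $|\theta_\lambda - \theta_\mu|^2$ contributes $\gtrsim (l'-l_*)^2 2^{-2\lfloor j'/2\rfloor}$ for an appropriately centered index $l_*$. Since $s_{\lambda_0} = \min(j,j')$, the weight $2^{s_{\lambda_0}}|\theta_\lambda-\theta_\mu|^2$ grows quadratically in the shifted directional index, making $\sum_{l'}(1 + 2^{s_{\lambda_0}}d)^{-k}$ summable whenever $k>1/2$ after accounting for the $2^{-2\lfloor j'/2\rfloor}$ normalization; one must track the extra $2^{|j-j'|/2}$-type factors that this rescaling generates. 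For the translation sum, the lattice $x_\mu = R_{-\theta_\mu}D_{2^{-j'}}k'$ is an anisotropically scaled rotated copy of $\Z^2$, so the terms $|x_\lambda - x_\mu|^2$ and $|\langle e_\lambda, x_\lambda - x_\mu\rangle|$ measure displacement on this grid; comparing the sum to the integral $\int_{\R^2}(1 + 2^{s_{\lambda_0}}(|x|^2 + |\langle e,x\rangle|))^{-k}\,dx$ and using the Jacobian $2^{-3j'/2}$ of the lattice shows this integral is finite for $k>2$ and scales like a power of $2^{|j-j'|}$.

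The main obstacle I anticipate is the careful bookkeeping of these scale-dependent powers: each of the three reductions (scale, direction, translation) produces factors of the form $2^{c|j-j'|}$ arising from the mismatched normalizations (e.g.\ the angular gap uses $2^{-\lfloor j'/2\rfloor}$ while the weight uses $2^{\min(j,j')}$, and the lattice Jacobian depends on $j'$ rather than on $\lambda_0$). One must verify that the total accumulated power is strictly less than $k$, which is precisely where the threshold $k>2$ enters and where the delicate case distinction $j'\le j$ versus $j'>j$ must be handled, since $s_{\lambda_0}$ switches between the two scales. Once these powers are shown to be dominated by the geometric factor $2^{-k|j-j'|}$, summing the resulting geometric series over $j'$ yields a bound uniform in $\lambda$, and the symmetric argument gives the second condition, completing the proof.
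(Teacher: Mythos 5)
Your proposal is correct and follows the same top-level strategy as the paper: fix the outer index, decompose the sum over the other index by scale, bound the fixed-scale sum over the directional and translational indices by a power of $2^{|j-j'|}$, and absorb that power into the geometric factor $2^{-k|j-j'|}$, which is where $k>2$ enters. The one genuine difference is in how the fixed-scale bound is obtained: the paper simply cites \cite[Equation (A.2)]{CD02}, which states $\sum_{\lambda\in\Lambda^0,\,s_\lambda=j}(1+2^qd(\mu,\lambda))^{-2}\lesssim 2^{2(j-q)_+}$, whereas you propose to derive the analogous estimate from scratch by comparing the lattice sum to an integral with the anisotropic Jacobian $2^{-3j'/2}$ for translations and the angular gap $2^{-\lfloor j'/2\rfloor}$ for directions. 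Your bookkeeping does close: the cell count contributes $2^{\lfloor j'/2\rfloor}\cdot 2^{3j'/2}$, the rescaled integral contributes $2^{-2q}$ with $q=\min(j,j')$, and the product is $2^{2(j'-q)}=2^{2(j'-j)_+}\le 2^{2|j-j'|}$, recovering exactly the cited bound; the convergence of the rescaled integral (equivalently, splitting the exponent $k=k_1+k_2$ with $k_1>1/2$ for the angular part and $k_2>3/2$ for the translational part) is precisely where $k>2$ is needed. This is in fact the computation the paper carries out explicitly later for the shearlet parametrization, so your route makes the proof self-contained at the cost of reproducing that calculation. Two small points to tighten: the reduction of the two admissibility conditions to one via ``symmetry'' needs a word, since $d(\lambda,\mu)$ is not literally symmetric (the term $|\langle e_\lambda,x_\lambda-x_\mu\rangle|$ uses $e_\lambda$, not $e_\mu$); it is only quasi-symmetric, via $|\langle e_\lambda-e_\mu,x_\lambda-x_\mu\rangle|\lesssim |\theta_\lambda-\theta_\mu|^2+|x_\lambda-x_\mu|^2$. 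And the translational integral alone converges already for $k>3/2$; it is the joint angular--translational estimate that forces $k>2$.
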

\begin{proof}
    We aim to prove that
    \begin{equation}\label{eq:prove}
        \sup_{\mu\in \Lambda^0}\sum_{\lambda\in\Lambda^0}
        \omega\left(\mu,\lambda\right)^{-k} <\infty.
    \end{equation}
    Writing $s_{\mu} = j'$ in the definition of $\omega\left(\mu,\lambda\right)$, we need to consider
    \begin{equation}\label{eq:prove2}
        \sum_{j\in\mathbb{Z}_+}\sum_{\lambda \in \Lambda^0, s_\lambda  = j}
        2^{-k\left|j - j' \right|}\left(1 +
        2^{\min(j,j')}d(\mu,\lambda)\right)^{-k}.
    \end{equation}
    According to \cite[Equation (A.2)]{CD02}, we have
    \begin{equation}\label{eq:cd2}
        \sum_{\lambda \in \Lambda^0, s_\lambda  = j}(1 + 2^{q}d(\mu,\lambda))^{-2}\lesssim
        2^{2(j-q)_+}
    \end{equation}
    for any $q$. Hence, for each $k>2$, \eqref{eq:prove2} can be estimated by
    $$
        \sum_{j\geq 0}2^{-k|j-j'|}2^{2|j-j'|} <\infty,
    $$
    which proves \eqref{eq:prove}.
\end{proof}
%
%
\subsection{Main Result}
%
The main result of this paper essentially states that any two systems of parabolic
molecules behave in the same way as far as approximation properties are concerned.
Specifically, we show the following theorem, whose proof is quite technical, wherefore
we postpone it to Subsection \ref{sec:almostorth}.
\begin{theorem}\label{thm:almostorth}
     Let $(m_\lambda)_{\lambda\in \Lambda}$,
     $(p_{\mu})_{\mu\in M}$ be two systems of
     parabolic molecules of order $(R,M,N_1,N_2)$
     with
     \begin{equation}\label{eq:conds}
        R\geq 2N,\quad  M > 4N-\frac{5}{4} ,\quad N_1 \geq 2N+\frac{3}{4} , \quad
        N_2\geq 2N.
     \end{equation}
     Then
     $$
        \left|\left\langle m_\lambda ,p_{\mu}\right\rangle\right|\lesssim
        \omega\left((s_\lambda,\theta_\lambda,x_\lambda),(s_\mu,\theta_\mu,x_\mu )\right)^{-N}.
     $$
\end{theorem}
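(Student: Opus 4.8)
The plan is to pass to the frequency side and bound the resulting oscillatory integral by separating the two independent sources of decay contained in $\omega$: the scale mismatch $|s_\lambda-s_\mu|$ and the localization distance $d(\lambda,\mu)$. First I would normalize the geometry. By Parseval, $\langle m_\lambda,p_\mu\rangle=\int_{\R^2}\hat m_\lambda(\xi)\,\overline{\hat p_\mu(\xi)}\,d\xi$, and using the representation recorded just before \eqref{eq:moldecaypolar} this reads
\[
\langle m_\lambda,p_\mu\rangle = 2^{-3(s_\lambda+s_\mu)/4}\int_{\R^2} \hat a^{(\lambda)}\!\big(D_{2^{-s_\lambda}}R_{\theta_\lambda}\xi\big)\,\overline{\hat b^{(\mu)}\!\big(D_{2^{-s_\mu}}R_{\theta_\mu}\xi\big)}\,e^{-2\pi i\langle x_\lambda-x_\mu,\xi\rangle}\,d\xi,
\]
where $b^{(\mu)}$ denotes the generator of $p_\mu$. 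Since both the $L_2$-inner product and the distance $\omega$ are invariant under a simultaneous rotation of all the data (each of the three summands in $d(\lambda,\mu)$ is manifestly rotation invariant), I may assume $\theta_\mu=0$ and set $\theta:=\theta_\lambda-\theta_\mu$. I would treat the representative case $s_\mu\le s_\lambda$, so that $s_{\lambda_0}=s_\mu$.

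Next I would isolate the scale-gap factor $2^{-N|s_\lambda-s_\mu|}$. Here the directional near-vanishing-moment factor $\min(1,2^{-s_\lambda}+|\xi_1|+2^{-s_\lambda/2}|\xi_2|)^M$ of \eqref{eq:molcond} is decisive: on the frequency band of the coarser molecule $p_\mu$, namely $|\xi|\sim 2^{s_\mu}$, the finer amplitude $\hat a^{(\lambda)}$ is evaluated near $|\eta_1|\sim 2^{s_\mu-s_\lambda}$ and is therefore forced to be of size $\lesssim 2^{-(s_\lambda-s_\mu)M}$. Combining this with the $\langle|\xi|\rangle^{-N_1}\langle\xi_2\rangle^{-N_2}$ decay and the measure $\sim 2^{3s_\mu/2}$ of the overlap wedge already yields a gain exponential in $s_\lambda-s_\mu$; the thresholds on $M,N_1,N_2$ in \eqref{eq:conds} are precisely what guarantees enough of this gain survives the integration by parts performed below.

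Then I would extract the localization factor $(1+2^{s_\mu}d(\lambda,\mu))^{-N}$ by non-stationary phase. The oscillatory factor $e^{-2\pi i\langle x_\lambda-x_\mu,\xi\rangle}$ is exploited through repeated integration by parts in $\xi$, each step moving a derivative onto the smooth amplitudes (available up to order $R$) and producing a power of a component of $x_\lambda-x_\mu$. The anisotropy forces the use of a differential operator adapted to the frame $e_\lambda$: differentiating transverse to $e_\lambda$ is cheap and yields the isotropic term $|x_\lambda-x_\mu|^2$, the slow oscillation along the long frequency direction $e_\lambda$ (of radial extent $2^{s_\lambda}$) yields the anomalous first-power term $|\langle e_\lambda,x_\lambda-x_\mu\rangle|$, and the relative rotation $\theta$, which displaces the two wedges in the transverse frequency variable by $\sim 2^{s_\lambda}\theta$, is converted by the $\langle\xi_2\rangle^{-N_2}$-decay into the angular term $|\theta_\lambda-\theta_\mu|^2$. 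Concretely I would partition the frequency plane into a bounded number of regimes according to whether the wedges overlap (i.e.\ whether $|\theta|\lesssim 2^{-s_\mu/2}$), estimate each regime by the pointwise bound \eqref{eq:moldecaypolar}, reserve the integration by parts for generating the spatial and angular decay, and finally sum the regimes.

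The hard part will be the anisotropic bookkeeping in the integration-by-parts step: one must choose the differential operator and the number of times it is applied so that the three \emph{inhomogeneous} terms of $d(\lambda,\mu)$ — quadratic in $|x_\lambda-x_\mu|$ and in $|\theta|$, but only linear in $\langle e_\lambda,x_\lambda-x_\mu\rangle$ — all emerge with the correct uniform power $2^{s_\mu}$ and the sharp exponent $N$. Matching these weights is exactly what pins down the thresholds in \eqref{eq:conds}: reaching the quadratic terms at order $N$ costs roughly $2N$ derivatives (hence $R\ge 2N$), while the simultaneous requirement that the scale-gap and angular gains survive forces the stated lower bounds on $M$, $N_1$, and $N_2$. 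Keeping every implied constant uniform over $\lambda\in\Lambda$ and $\mu\in M$ throughout this case splitting is the final technical care the argument demands.
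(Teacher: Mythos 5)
Your plan follows essentially the same route as the paper's proof: Parseval plus rotational normalization, the directional moment factor and the $\langle|\xi|\rangle^{-N_1}\langle\xi_2\rangle^{-N_2}$ decay yielding the scale-gap and angular factors (the paper's Lemmas \ref{lem:bumps}--\ref{lem:freqangdec}), and repeated integration by parts with a second-order operator anisotropically weighted along $e_\lambda$ to produce the two quadratic terms and the anomalous linear term of $d(\lambda,\mu)$ (the paper's operator $\mathcal{L}$ and Lemmas \ref{lem:diffopprod}--\ref{lem:prodrule}). The "anisotropic bookkeeping" you flag as the hard part is exactly what the paper's Lemma \ref{lem:prodrule} carries out, including the $R\geq 2N$ count you identify; the approach is correct.
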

This result shows that the Gramian matrix between any two systems of parabolic molecules
satisfies a strong off-diagonal decay property and is in that sense very close to
a diagonal matrix. As we shall see in Section \ref{sec:appl}, this result has a number of
immediate applications, most notably for the approximation properties of arbitrary frames which
are systems of parabolic molecules (they turn out to be equivalent!).

We find it particularly striking that our framework is general enough to include both curvelet-type,
as well as shearlet-type constructions (see Section \ref{sec:exam}). Therefore, as a consequence
of Theorem \ref{thm:almostorth}, all these systems satisfy the same celebrated properties of the
curvelet construction given in \cite{CD04}. To demonstrate the importance of our result, Section
\ref{sec:appl} discusses selected applications of Theorem \ref{thm:almostorth} such as sparsity
equivalence and equivalence of associated smoothness spaces.

%
\section{Examples of Parabolic Molecules}
\label{sec:exam}
%
Having defined parabolic molecules in Section \ref{sec:paramol} above, it is important to
examine the versatility of this concept. This is done in the present section.
The main findings are that essentially all known constructions in the literature
can be cast in our framework and are thus amenable to the techniques and results
developed in this paper.

We divide the section into two subsections. In Subsection \ref{sec:examcurve}
we study so-called curvelet-like constructions. These include curvelets as
defined in \cite{Candes2005a} but also other constructions, such as in \cite{Borup2007,Smith1998a}.
We show that all these function systems are parabolic molecules. In fact, this result should not
come to much as a surprise: In \cite{CD02} a similar concept of curvelet molecules is introduced which
includes all the above-mentioned constructions. We also show that curvelet molecules are parabolic molecules.

The real strength of our definition of parabolic molecules is that it includes not only curvelet-type constructions.
In fact, we consider it one of the main findings of this paper that also shearlet-type systems can be thought of
as instances of parabolic molecules, associated to a specific shearlet parametrization $\Phi^\sigma$!
We show this result, as well as the admissibility of $\Phi^\sigma$, below in Subsection \ref{sec:examshear}.
After that, to provide some concrete examples, we study several specific constructions. In particular, we show
that compactly supported shearlet constructions (see e.g. \cite{Kittipoom2010}) are parabolic molecules.
%
\subsection{Curvelet-like constructions}\label{sec:examcurve}
%
%
\subsubsection{Second Generation Curvelets}\label{sec:curvelets}
%
It is easily verified that curvelet molecules as defined in \cite{CD02} are instances of
parabolic molecules associated with the canonical parametrization. In particular,
second generation curvelets \cite{Candes2005a} are parabolic molecules of arbitrary order.
We start by describing the construction.
Pick two window functions $W(r)$, $V(t)$ which are both real, nonnegative, $C^\infty$
and supported in $\left( \frac{1}{2} , 2\right)$ and in $\left(-1 , 1 \right)$ respectively.
We further assume that these windows satisfy
\begin{equation*}
    \sum_{j\in\mathbb{Z}}W\left(2^j r \right)^2 = 1\quad \mbox{for all }r\in\mathbb{R}_+
    \quad \mbox{and}\quad \sum_{l\in\mathbb{Z}}V\left(t-l\right)^2=1\quad
    \mbox{for all }t\in\left(-\frac{1}{2},\frac{1}{2}\right).
\end{equation*}
Now define in polar coordinates
\begin{equation*}
    \hat\gamma_{(j,0,0)}(r,\omega):=2^{-3j/4}W\left(2^{-j}r\right)V\left(2^{\lfloor j/2 \rfloor}\omega \right)
    \mbox{ and }
    \gamma_{(j,l,k)}(\cdot):=\gamma_{(j,0,0)}\left(R_{\theta_{(j,l,k)}}
    \left(\cdot - x_{(j,l,k)}\right)\right),
\end{equation*}
where
$(j,l,k)\in \Lambda^0$.
With appropriate modifications for the low-frequency case $j=0$
it is possible to show that the system
$$
    \Gamma^0:=\left\{\gamma_\lambda: \lambda\in \Lambda^0 \right\}
$$
constitutes a Parseval frame for $L_2(\mathbb{R}^2)$.
In order to make the frame elements real-valued, it is possible
to identify elements oriented in antipodal directions.
This frame is customarily referred to as
the tight frame of \emph{second generation curvelets}.
We now show that this frame forms a system of parabolic molecules of arbitrary order.
\begin{prop}
    The second generation curvelet frame constitutes a system of
    parabolic molecules of arbitrary order associated with the canonical
    parametrization.
\end{prop}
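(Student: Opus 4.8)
The plan is to verify directly that the second generation curvelets $\gamma_{(j,l,k)}$ satisfy the defining estimate \eqref{eq:molcond} of a family of parabolic molecules, with the implicit constants uniform in $\lambda$, for arbitrarily large orders $(R,M,N_1,N_2)$. The crucial observation is that the curvelet construction already has built into it the parabolic-molecule structure: the element $\gamma_{(j,l,k)}$ is obtained from a single generator $\gamma_{(j,0,0)}$ by rotation and translation, exactly matching the form $m_\lambda(x)=2^{3s_\lambda/4}a^{(\lambda)}(D_{2^{s_\lambda}}R_{\theta_\lambda}(x-x_\lambda))$ demanded in the definition. So the task reduces to identifying the generating function $a^{(\lambda)}$ and checking its Fourier transform obeys the required bound.

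First I would unwind the definitions to read off $\hat a^{(\lambda)}$. Since the location and orientation are stripped away by the affine normalization, one sees that the relevant generating profile is essentially the radial-angular window $W(2^{-j}r)V(2^{\lfloor j/2\rfloor}\omega)$ after undoing the dilation $D_{2^{s_\lambda}}$; concretely, after the change of variables induced by $D_{2^{-j}}$ and $R_{\theta_\lambda}$ one obtains a function whose Fourier transform is, up to the normalization prefactor, $W(r)$ times an angular window localized near the $\xi_1$-axis. I would then track how the supports of $W$ and $V$ translate into the three factors on the right-hand side of \eqref{eq:molcond}. The support of $W$ in $(\tfrac12,2)$ confines the normalized radius to a compact annulus bounded away from the origin and from infinity, which immediately yields the $\langle|\xi|\rangle^{-N_1}$ decay for any $N_1$ (the function is compactly supported in frequency, so its Fourier transform decays faster than any polynomial, and more simply the support itself forces the weight). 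The support of $V$ controls the angular variable, which after rescaling becomes the $\xi_2$ variable, giving the $\langle\xi_2\rangle^{-N_2}$ factor and, because $V$ is supported away from nothing in particular but the normalized second coordinate is bounded, the moment factor $\min(1,2^{-s_\lambda}+|\xi_1|+2^{-s_\lambda/2}|\xi_2|)^M$.

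The derivative bounds for $|\beta|\le R$ with $R$ arbitrary follow from the smoothness hypothesis: $W$ and $V$ are $C^\infty$ with compact support, hence $\hat a^{(\lambda)}$ is a smooth compactly supported function (in the normalized frequency variables) whose every partial derivative is bounded, uniformly in $j$ because all $\gamma_{(j,0,0)}$ share the same profile $W,V$ up to the explicit dilation already absorbed into the normalization. I would emphasize that uniformity in $\lambda$ is precisely what the construction guarantees, since the only $j$-dependence in the normalized generator is through the prefactor $2^{-3j/4}$, which is exactly cancelled by the $2^{3s_\lambda/4}$ in the molecule definition.

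The step I expect to require the most care is the vanishing-moment factor, i.e.\ verifying $|\partial^\beta\hat a^{(\lambda)}(\xi)|\lesssim\min(1,2^{-s_\lambda}+|\xi_1|+2^{-s_\lambda/2}|\xi_2|)^M$ with the correct interaction between the $2^{-s_\lambda}$-scale of the low-frequency cutoff and the support of $W$. Because $W$ is supported in $(\tfrac12,2)$, the normalized radius is bounded below, so in the relevant region the $\min$ evaluates to $1$ and the factor is harmlessly absorbed; the only genuine check is that the exceptional low-frequency regime is avoided by the support of $W$, so that no decay toward the origin is actually needed here—this is where the ``of arbitrary order'' claim is cleanest, since the compact frequency support trivializes every one of the three weights. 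I would therefore conclude by noting that each of $M,N_1,N_2,R$ may be taken arbitrarily large, establishing that the curvelet frame is a family of parabolic molecules of arbitrary order associated with $\Lambda^0$, the low-frequency elements $j=0$ being handled separately by the same compact-support reasoning.
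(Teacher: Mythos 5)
Your overall strategy is the same as the paper's: reduce to the generator $\gamma_{(j,0,0)}$ by rotation invariance, observe that $\hat a^{(\lambda)}$ is compactly supported in a region bounded away from the origin so that all three weight factors in \eqref{eq:molcond} are bounded below by constants on that support, and thereby reduce the whole claim to uniform-in-$j$ boundedness of the derivatives $\partial^\beta \hat a^{(\lambda)}$ for $|\beta|\le R$. Up to that point the argument is fine.

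The gap lies in how you justify that last, and only substantive, step. You assert that ``the only $j$-dependence in the normalized generator is through the prefactor $2^{-3j/4}$, which is exactly cancelled by the $2^{3s_\lambda/4}$ in the molecule definition.'' This is not true. The curvelet is built from a \emph{polar} parabolic dilation, $W(2^{-j}r)V(2^{\lfloor j/2\rfloor}\omega)$ in polar coordinates, whereas the molecule normalization undoes a \emph{Cartesian} anisotropic dilation $D_{2^{s_\lambda}}$; these two operations do not cancel each other. What remains after normalization is $\hat a^{(\lambda)}(\xi) = W\left(\alpha_j(\xi)\right)V\left(\beta_j(\xi)\right)$ with $\alpha_j(\xi) = 2^{-j}\sqrt{2^{2j}\xi_1^2+2^j\xi_2^2}$ and $\beta_j(\xi) = 2^{j/2}\arctan\left(\xi_2/(2^{j/2}\xi_1)\right)$, so the inner functions depend on $j$ genuinely, not merely through a prefactor. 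Consequently $\partial^\beta \hat a^{(\lambda)}$ involves derivatives of $\alpha_j$ and $\beta_j$ (with chain- and product-rule combinations up to order $R$), and one must actually verify that these are bounded on the support of $\hat a^{(\lambda)}$ uniformly in $j$ --- this is precisely the ``calculus exercise'' the paper carries out, and it works because on that support $|\xi_1|$ is bounded above and below and $|\xi_2|$ is bounded. Your proof as written denies that there is anything to check at this point, so this step needs to be repaired; once it is, the remainder of your argument (the triviality of the three weights on the compact support, and the separate treatment of $j=0$) goes through.
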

\begin{proof}
    Due to rotation invariance we may restrict ourselves to the case $\theta_\lambda = 0$.
    Therefore, denoting $\gamma_j:=\gamma_{(j,0,0)}$, we need to show that the function
    $$
          a^{(\lambda)}(\cdot):=2^{-3s_\lambda/4}\gamma_j\left(D_{2^{-s_\lambda}}\cdot \right)
    $$
    satisfies (\ref{eq:molcond}) for $(R,M,N_1,N_2)$ arbitrary.
    First note that
    \begin{equation*}
        \hat a^{(\lambda)}(\cdot)=2^{3s_\lambda/4}\hat \gamma_j\left(D_{2^{s_\lambda}}\cdot \right).
    \end{equation*}
    The function $\hat a^{(\lambda)}$, together with all its derivatives has
    compact support in a rectangle away from the $\xi_1$-axis.
    Therefore, it only remains to show that, on its support,
    the function $\hat a^{(\lambda)}$ has bounded derivatives, with
    a bound independent of $j$.
    But this follows from elementary arguments, using
    $r = \sqrt{\xi_1^2 + \xi_2^2}$, $\omega = \arctan\left(\xi_2/\xi_1\right)$.
    Then
    $$
        \hat a^{(\lambda)}(\xi)
        = \hat \gamma_{(j,0,0)}\left(D_{2^{j}}\xi\right)
        = W\left(\alpha_j(\xi)\right)V\left(\beta_j(\xi)\right),
    $$
    where
    $$
        \alpha_j(\xi) := 2^{-j}\sqrt{2^{2j}\xi_1^2+2^j\xi_2^2},\mbox{ and }
        \beta_j(\xi):= 2^{j/2}\arctan\left(\frac{\xi_2}{2^{j/2}\xi_1}\right).
    $$
    Now it is a simple calculus exercise to show that all derivatives of $\alpha_j$
    and $\beta_j$ are bounded on the support of $\hat a^{(\lambda)}$ and
    uniformly in $j$. This proves the result.
\end{proof}
%
%
\subsubsection{Hart Smith's Parabolic Frame}\label{sec:smithframe}
%
Historically, the first instance of a decomposition into parabolic molecules can be found in
Hart Smith's work on Fourier Integral Operators and Wave Equations \cite{Smith1998a}.
This frame, as well as its dual, again forms a system of parabolic molecules of
arbitrary order associated with the canonical parametrization.
We refer to \cite{Smith1998a,Andersson2008} for the details of the
construction which is essentially identical
to the curvelet construction, with primal and dual frame being allowed to differ.
The same discussion as above for curvelets shows
that this system consists of parabolic molecules.
%
\subsubsection{Borup and Nielsen's Construction}\label{sec:borupframe}
%
Another very similar construction has been given in \cite{Borup2007}. In this paper, the focus has been on
the study of associated function spaces. Again, it is straightforward to prove that this system constitutes
a system of parabolic molecules of arbitrary order associated with the canonical parametrization.
As a corollary to our results, it will actually turn out that
the spaces defined in \cite{Borup2007} coincide with the approximation spaces corresponding to
curvelets, shearlets, and Smith's transform.
%
\subsubsection{Curvelet Molecules}
%
In \cite{CD02} the authors introduced the notion of \emph{curvelet molecules} which are a useful concept
in proving sparsity properties of wave propagators. For the sake of completion, we include the exact
definition.
\begin{definition}
    Let $\Lambda^0$ be the canonical parametrization.
    A family $(m_\lambda)_{\lambda \in \Lambda^0}$ is
    called a \emph{family of curvelet molecules} of
    regularity $R$ if
    it can be written as
    $$
        m_\lambda (x) =
        2^{3s_\lambda/4}
        a^{(\lambda)}
        \left(D_{2^{s_\lambda}}R_{\theta_\lambda}\left(x - x_\lambda\right)\right)
    $$
    such that for all $|\beta| \le R$ and each $N = 0,1,2,\dots$
    \begin{equation}\label{eq:curvemoldecay}
        |\partial^\beta a^{(\lambda)}(x)|\lesssim \langle x\rangle^{-N}
    \end{equation}
    and for $M = 0,1,\dots $
    \begin{equation}\label{eq:curvemolmoments}
        |\hat{a}^{(\lambda)}(\xi)|\lesssim
        \min\left(1,2^{-s_\lambda} + |\xi_1| + 2^{-s_\lambda/2}|\xi_2|\right)^M.
    \end{equation}
\end{definition}
This definition is similar to our definition of parabolic molecules, however with two crucial differences:
First, (\ref{eq:molcond}) allows for arbitrary rotation angles and is therefore more general. Curvelet
molecules on the other hand are only defined for the canonical parametrization $\Lambda^0$ (which, in contrast
to our definition, is not sufficiently general to also cover shearlet-type systems). Second, the decay conditions
analogous to our condition (\ref{eq:molcond}) are more restrictive in the sense that it requires infinitely
many nearly vanishing moments. In fact, the following result holds:
\begin{prop}\label{lem:curvemol}
    A system of curvelet molecules of regularity $R$ constitutes a system of
    parabolic molecules of order $(\infty, \infty , R/2, R/2)$.
\end{prop}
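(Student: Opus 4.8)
The plan is to show that the defining conditions for curvelet molecules, namely the spatial decay (\ref{eq:curvemoldecay}) and the moment condition (\ref{eq:curvemolmoments}), imply the single Fourier-side estimate (\ref{eq:molcond}) with the claimed parameters $(R,M,N_1,N_2) = (\infty,\infty,R/2,R/2)$. Since both definitions share the identical dilation/rotation structure $m_\lambda(x) = 2^{3s_\lambda/4} a^{(\lambda)}(D_{2^{s_\lambda}}R_{\theta_\lambda}(x-x_\lambda))$, it suffices to compare the conditions on the generators $a^{(\lambda)}$ themselves. The order $(\infty,\infty,\cdot,\cdot)$ means I may take $R$ and $M$ in (\ref{eq:molcond}) arbitrarily large, so the real content is producing the two decay factors $\langle|\xi|\rangle^{-N_1}\langle\xi_2\rangle^{-N_2}$ with $N_1 = N_2 = R/2$.

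First I would handle the moment factor, which is essentially immediate: condition (\ref{eq:curvemolmoments}) holds for every $M = 0,1,2,\dots$, and this is exactly the factor $\min(1, 2^{-s_\lambda}+|\xi_1|+2^{-s_\lambda/2}|\xi_2|)^M$ appearing in (\ref{eq:molcond}). So the $M$-dependent part transfers verbatim, justifying $M = \infty$. The substantive step is to convert the spatial decay of $a^{(\lambda)}$ and its derivatives into frequency decay of $\hat a^{(\lambda)}$. The standard mechanism is that decay in space corresponds to smoothness in frequency, while smoothness in space (controlled derivatives) corresponds to decay in frequency. Here (\ref{eq:curvemoldecay}) gives us control of $\partial^\beta a^{(\lambda)}$ for $|\beta|\le R$, each decaying like $\langle x\rangle^{-N}$ for arbitrarily large $N$; the latter guarantees these derivatives are integrable, so that $\xi^\beta \hat a^{(\lambda)}(\xi)$ is bounded (in fact continuous) for all $|\beta|\le R$. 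The plan is to exploit this to bound $\hat a^{(\lambda)}$ by a negative power of $|\xi|$.

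More precisely, I would estimate $|\xi|^{R}\,|\hat a^{(\lambda)}(\xi)|$ by summing over monomials $\xi^\beta$ with $|\beta| = R$, each of which equals (up to constants) the Fourier transform of $\partial^\beta a^{(\lambda)}$, and hence is uniformly bounded using (\ref{eq:curvemoldecay}) with $N$ large enough to ensure $L_1$ integrability. Combined with the trivial bound $|\hat a^{(\lambda)}(\xi)|\lesssim 1$ (again from integrability), this yields $|\hat a^{(\lambda)}(\xi)|\lesssim \langle|\xi|\rangle^{-R}$. The factor $\langle \xi_2\rangle^{-N_2}$ is obtained by the same device applied only to the $\xi_2$-variable: bounding $|\xi_2|^{R}|\hat a^{(\lambda)}(\xi)|$ via $\partial_{x_2}^{R} a^{(\lambda)}$ gives decay of order $R$ in $\xi_2$ alone. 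The loss to $R/2$ in both exponents comes from splitting the available $R$ derivatives: I must simultaneously produce isotropic decay $\langle|\xi|\rangle^{-N_1}$ and anisotropic decay $\langle\xi_2\rangle^{-N_2}$, and since the product $\langle|\xi|\rangle^{N_1}\langle\xi_2\rangle^{N_2}$ needs total differentiation order $N_1+N_2\le R$, the symmetric choice $N_1 = N_2 = R/2$ is the natural one. The uniformity of all implicit constants over $\lambda$ follows because the constants in (\ref{eq:curvemoldecay}) are themselves uniform in $\lambda$.

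The main obstacle is the bookkeeping in simultaneously extracting both decay factors from the single reservoir of $R$ spatial derivatives. The cleanest route is to bound the product weight $\langle|\xi|\rangle^{N_1}\langle\xi_2\rangle^{N_2}\,|\hat a^{(\lambda)}(\xi)|$ directly: expanding $\langle|\xi|\rangle^{N_1}\langle\xi_2\rangle^{N_2}$ into a sum of monomials $\xi_1^{a}\xi_2^{b}$ with $a+b \le 2(N_1+N_2) = 2R$ shows why a factor of two intervenes, since $\langle t\rangle^{N}$ involves powers up to $2N$ rather than $N$; choosing $N_1 = N_2 = R/2$ keeps the total monomial degree within the $R$ derivatives that (\ref{eq:curvemoldecay}) supplies. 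Each resulting monomial times $\hat a^{(\lambda)}$ is, up to constants, the Fourier transform of a derivative $\partial^\beta a^{(\lambda)}$ with $|\beta|\le R$, which is uniformly bounded by the rapid spatial decay. This confirms (\ref{eq:molcond}) with the asserted parameters and completes the identification of curvelet molecules as parabolic molecules of order $(\infty,\infty,R/2,R/2)$.
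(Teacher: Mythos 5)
Your overall strategy coincides with the paper's: turn the spatial decay \eqref{eq:curvemoldecay} of the first $R$ derivatives of $a^{(\lambda)}$ into frequency decay of total order $R$, and carry the moment condition \eqref{eq:curvemolmoments} over as the $\min(\cdot)^M$ factor. Two points in your write-up are genuinely problematic, however.

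First, the assertion that the moment factor ``transfers verbatim'' is not justified. The claimed order $(\infty,\infty,R/2,R/2)$ means that \eqref{eq:molcond} must hold for $\partial^\beta\hat a^{(\lambda)}$ for \emph{every} multi-index $\beta$, whereas \eqref{eq:curvemolmoments} controls only $\hat a^{(\lambda)}$ itself, and a bound $|\hat a(\xi)|\lesssim m(\xi)^M$ does not in general survive differentiation in $\xi$ (consider $\hat a(\xi)=\xi_1^M g(\xi)$). The paper's proof explicitly appeals to \cite{CD02} for the fact that the near-vanishing-moment estimate of curvelet molecules self-improves to all frequency derivatives; some argument of this kind, exploiting the spatial localization of $x^\beta a^{(\lambda)}$, is needed and is absent from your proposal. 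Relatedly, your decay argument is only carried out for $\hat a^{(\lambda)}$ itself; to obtain $\infty$ in the first slot you must run it for $\partial^\beta\hat a^{(\lambda)}=\widehat{(-2\pi i x)^\beta a^{(\lambda)}}$, which does work because $x^\beta a^{(\lambda)}$ still has $R$ rapidly decaying spatial derivatives by \eqref{eq:curvemoldecay} and Leibniz, but this step should be stated.

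Second, your explanation of the exponent $R/2$ is arithmetically inconsistent. Since $\langle t\rangle=(1+t^2)^{1/2}$, the weight $\langle|\xi|\rangle^{N_1}\langle\xi_2\rangle^{N_2}$ (for even $N_i$) is a polynomial of total degree $N_1+N_2$, not $2(N_1+N_2)$; taken at face value, your count of $2R$ exceeds the $R$ available spatial derivatives and would force $N_1=N_2=R/4$ rather than the asserted $R/2$. With the correct count the degree is $N_1+N_2=R$ and your symmetric choice goes through. The paper reaches the same exponents more directly: every $\partial^\beta\hat a^{(\lambda)}$ is bounded by $\langle|\xi|\rangle^{-R}$, and $\langle|\xi|\rangle^{-R}\le\langle|\xi|\rangle^{-R/2}\langle\xi_2\rangle^{-R/2}$ because $\langle\xi_2\rangle\le\langle|\xi|\rangle$; the halving comes from this splitting. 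Finally, note that both factors of \eqref{eq:molcond} must hold \emph{simultaneously} as a product; this is harmless here because the set where $\min\left(1,2^{-s_\lambda}+|\xi_1|+2^{-s_\lambda/2}|\xi_2|\right)<1$ is contained in a fixed ball in $\xi$, on which the decay factors are bounded below, but it deserves a sentence.
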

\begin{proof}
    The definition of curvelet molecules as above implies that
    the estimate (\ref{eq:curvemolmoments}) also holds for
    all derivatives of $\hat{a}^{(\lambda)}$, see \cite{CD02}.
    Furthermore, by (\ref{eq:curvemoldecay}), all derivatives of $\hat{a}^{(\lambda)}$
    can be estimated in modulus by $\langle |\xi| \rangle^{-R}$, which in turn can be estimated
    by $\langle |\xi| \rangle^{-R/2}\langle \xi_2\rangle^{-R/2}$. This yields the desired estimate.
\end{proof}
%
\subsection{Shearlets}\label{sec:examshear}
%
Shearlets were introduced in 2006 as the first directional representation system which not only
satisfies the same celebrated properties of curvelets, but also provides a unified treatment
of the continuum and digital setting. This key property is achieved through utilization of a
shearing matrix as a means to parameterize orientation, which is highly adapted to the digital
grid in contrast to rotation. For more information on shearlets, we refer to the book \cite{KL12}.

It is perhaps not surprising that curvelets and their relatives described above
fall into the framework of parabolic molecules.
Here we show the crucial fact that shearlets can be seen as a special case of parabolic
molecules as well.
Consider the discrete index set
\begin{equation}\label{eq:shearindex}
    \Lambda^\sigma := \left\{ (\varepsilon, j,l,k)\in \mathbb{Z}_2\times
    \mathbb{Z}^4:\varepsilon \in \{0,1\},\
    j\geq 0,\ l  = -2^{\lfloor \frac{j}{2}\rfloor}, \cdots
    , 2^{\lfloor \frac{j}{2}\rfloor}\right\},
\end{equation}
and the shearlet system
$$
    \Sigma:=\left\{\sigma_\lambda: \lambda\in \Lambda^\sigma\right\},
$$
with
$$
    \sigma_{(\varepsilon , 0 , 0 , k)}(\cdot) = \varphi(\cdot - k),\quad
    \sigma_{(\varepsilon , j, l , k)}(\cdot) = 2^{3j/4}\psi^\varepsilon_{j,l,k}
     \left(D^\varepsilon_{2^j}
    S_{l,j}^\varepsilon \cdot - k\right),\quad j\geq 1,
$$
where $D^0_{a}=D_a$, $D^1_a := \mbox{diag}(\sqrt{a},a)$,
$S_{l,j} := \left(\begin{array}{cc}1 & l 2^{-\lfloor j/2\rfloor}\\0&1\end{array}\right)$
and $S_{l,j}^1 = \left(S_{l,j}^0\right)^\top$. Then we define shearlet molecules  of order
$(R,M,N_1,N_2)$, which is a generalization of shearlets adapted to parabolic molecules, in
particular including the classical shearlet molecules introduced in \cite{GL08}, see
Subsection \ref{subsec:shearletmole}.
\begin{definition}
    We call $\Sigma$ a system of \emph{shearlet molecules}
    of order $(R,M,N_1,N_2)$
    if
    the functions $\varphi,\ \psi^0_{j,l,k},\ \psi^1_{j,l,k}$ satisfy
    \begin{equation}\label{eq:shearcond}
        |\partial^\beta \hat \psi^\varepsilon_{j,l,k}(\xi_1,\xi_2)|\lesssim \min
        \left(1,|\xi_{1+\varepsilon}|\right)^M
        \langle |\xi |\rangle^{-N_1}\langle \xi_{2-\varepsilon}\rangle^{-N_2}
    \end{equation}
    for every $\beta\in \mathbb{N}^2$ with $|\beta|\le R$.
\end{definition}
\begin{remark}
    In our proofs it is nowhere required that the directional parameter $l$
    runs between $-2^{\lfloor \frac{j}{2}\rfloor}$ and $-2^{\lfloor \frac{j}{2}\rfloor}$
    Indeed, $l$ running in any discrete interval
    $-C2^{\lfloor \frac{j}{2}\rfloor},\dots , C2^{\lfloor \frac{j}{2}\rfloor}$
    would yield the exact same results, as a careful inspection of our arguments
    shows. Likewise, in certain shearlet constructions, the translational
    sampling runs not through $k\in \mathbb{Z}^2$ but through $\tau \mathbb{Z}^2$
    with $\tau>0$ a sampling constant. Our results are also valid for this case with
    the similar proofs. The same remark applies to all curvelet-type constructions.
\end{remark}
Now we can show the main result of this section, namely that shearlet systems with generators satisfying
(\ref{eq:shearcond}) are actually instances of parabolic molecules associated with
a specific shearlet-adapted parametrization $\Phi_\sigma$.
\begin{prop}\label{prop:shearletmol}
    Assume that the shearlet system $\Sigma$ constitutes
    a system of shearlet molecules
    of order $(R,M,N_1,N_2)$.
    Then $\Sigma$ constitutes
    a system of parabolic molecules of order $(R,M,N_1,N_2)$, associated to the
    parametrization $(\Lambda^\sigma,\Phi^\sigma)$, where
    $$
        \Phi^\sigma(\lambda)=(s_\lambda,\theta_\lambda,x_\lambda):=
        \left(j,\varepsilon\pi/2+\arctan(-l2^{-\lfloor j/2\rfloor}),
        \left(S_l^\varepsilon\right)^{-1}
        D^\varepsilon_{2^{-j}}k\right).
    $$
\end{prop}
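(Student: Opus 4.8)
The plan is to verify directly that every shearlet $\sigma_\lambda$ can be written in the canonical molecule form and that the resulting generator satisfies the Fourier estimate (\ref{eq:molcond}) with the parameters $(s_\lambda,\theta_\lambda,x_\lambda)=\Phi^\sigma(\lambda)$. I would first treat the fine scales $j\ge 1$ and the cone index $\varepsilon=0$; the cone $\varepsilon=1$ then follows by a rotation through $\pi/2$, and the coarse scale $j=0$ consists only of translates of the bump $\varphi$ and is handled separately. Writing $s:=l2^{-\lfloor j/2\rfloor}$, so that $|s|\le 1$ by the range of $l$ in \eqref{eq:shearindex}, and factoring the argument as $D^0_{2^j}S_{l,j}x-k = D^0_{2^j}S_{l,j}(x-x_\lambda)$ with $x_\lambda=S_{l,j}^{-1}D^0_{2^{-j}}k$, I obtain the representation $\sigma_\lambda(x)=2^{3s_\lambda/4}a^{(\lambda)}\!\bigl(D_{2^{s_\lambda}}R_{\theta_\lambda}(x-x_\lambda)\bigr)$ with $s_\lambda=j$ and new generator $a^{(\lambda)}=\psi^0_{j,l,k}\circ A$, where $A:=D^0_{2^j}S_{l,j}R_{-\theta_\lambda}D^0_{2^{-j}}$.

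Since the molecule conditions are posed in frequency, the central object is $B:=A^{-\top}$, which satisfies $\hat a^{(\lambda)}(\xi)=\hat\psi^0_{j,l,k}(B\xi)$ (the determinant of $A$ equals one, as all of $D,S,R$ have determinant one). A short computation, using $\tan\theta_\lambda=-s$ so that $R_{-\theta_\lambda}=(1+s^2)^{-1/2}\left(\begin{smallmatrix}1&-s\\s&1\end{smallmatrix}\right)$ and $B=D^0_{2^{-j}}(S_{l,j}^\top)^{-1}R_{-\theta_\lambda}D^0_{2^{j}}$, yields the explicit form
$$
B=\begin{pmatrix}\frac{1}{\sqrt{1+s^2}}&-\frac{s\,2^{-j/2}}{\sqrt{1+s^2}}\\[2pt]0&\sqrt{1+s^2}\end{pmatrix}.
$$
The key structural feature, and the heart of the argument, is that the choice $\theta_\lambda=\arctan(-s)$ aligns the rotation with the shear so precisely that $B$ is a \emph{bounded} matrix with \emph{bounded} inverse, uniformly in $\lambda$ (because $1\le\sqrt{1+s^2}\le\sqrt2$), while its off-diagonal entry carries the extra factor $2^{-j/2}$. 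This is exactly the parabolic scaling budget, and it is what lets the anisotropic weight of (\ref{eq:molcond}) be reproduced.

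With $B$ in hand the three weight factors in (\ref{eq:molcond}) transfer routinely. Writing $\eta=B\xi$, one has $\eta_1=(1+s^2)^{-1/2}(\xi_1-s2^{-j/2}\xi_2)$ and $\eta_2=\sqrt{1+s^2}\,\xi_2$, whence $|\eta_1|\le|\xi_1|+2^{-j/2}|\xi_2|\le 2^{-s_\lambda}+|\xi_1|+2^{-s_\lambda/2}|\xi_2|$ reproduces the directional factor $\min(1,|\eta_1|)^M$, while $|\eta|\asymp|\xi|$ and $|\eta_2|\asymp|\xi_2|$ reproduce $\langle|\eta|\rangle^{-N_1}$ and $\langle\eta_2\rangle^{-N_2}$. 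The derivatives are controlled by the chain rule: $\partial^\beta\hat a^{(\lambda)}(\xi)$ is a finite combination of $(\partial^\gamma\hat\psi^0_{j,l,k})(B\xi)$ with $|\gamma|\le|\beta|$ and coefficients built from the uniformly bounded entries of $B$, so (\ref{eq:shearcond}) for all $|\gamma|\le R$ yields (\ref{eq:molcond}) for all $|\beta|\le R$.

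Finally, the cone $\varepsilon=1$ reduces to the previous case through the conjugation identities $D^1_a=R_{\pi/2}D^0_aR_{-\pi/2}$ and $S^1_{l,j}=(S^0_{l,j})^\top$, which is precisely why $\theta_\lambda$ carries the offset $\varepsilon\pi/2$; after this rotation the weight in (\ref{eq:shearcond}) for $\varepsilon=1$ (with $\xi_1,\xi_2$ interchanged) matches the $\varepsilon=0$ analysis verbatim. The coarse scale $j=0$ contributes only translates $\varphi(\cdot-k)$ at $s_\lambda=0$, where (\ref{eq:molcond}) reduces to plain $\langle|\xi|\rangle^{-N_1}\langle\xi_2\rangle^{-N_2}$ decay of $\hat\varphi$ and is immediate under the standing smoothness and localization assumptions on $\varphi$. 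I expect the only genuinely delicate point to be the verification that $B$ has the displayed form with its $2^{-j/2}$ off-diagonal decay: this is where the coupling between $\theta_\lambda$, the shear $S_{l,j}$, and the parabolic dilation must conspire, and once this matrix identity is established the remainder is bookkeeping.
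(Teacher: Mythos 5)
Your proof is correct and follows essentially the same route as the paper's: both reduce to showing that the frequency-side matrix $D_{2^{-j}}S_{l,j}^{-\top}R_{-\theta_\lambda}D_{2^{j}}$ is upper triangular with diagonal entries bounded above and below uniformly in $j,l$ and an off-diagonal entry carrying the factor $2^{-j/2}$, and then transfer the weights of (\ref{eq:shearcond}) to (\ref{eq:molcond}) via the chain rule; your explicit closed form for $B$ and your separate treatment of $\varepsilon=1$ and $j=0$ only make explicit what the paper leaves implicit. One immaterial slip: $\det D_{2^{\pm j}}\neq 1$; it is the cancellation $\det D_{2^{j}}\det D_{2^{-j}}=1$ that gives $\det A=1$.
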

\begin{proof}
    We confine the discussion to $\varepsilon = 0$, the other case being the same.
    Further, we will suppress the superscript $\varepsilon$ as well as the subscript $j,l,k$ in
    our notation.
    We need to show that
    $$
        a^{(\lambda)}(\cdot):=\psi\left(D_{2^{s_\lambda}} S_{l,s_\lambda} R_{\theta_\lambda}^\top
        D_{2^{-s_\lambda}}\cdot\right)
    $$
    satisfies (\ref{eq:molcond}).
    The Fourier transform of $a^{(\lambda)}$ is
    given by
    $$
        \hat a^{(\lambda)}(\cdot) =\hat \psi \left(D_{2^{-s_\lambda}}
        S_{l,s_\lambda}^{-\top}R_{\theta_\lambda}^\top
        D_{2^{s_\lambda}}\cdot\right).
    $$
    The matrix $S_{l,s_\lambda}^{-\top}R_{\theta_\lambda}^\top$
    has the form
    $$
        S_{l,s_\lambda}^{-\top}R_{\theta_\lambda}^\top=\left(\begin{array}
        {cc}\cos(\theta_\lambda)&\sin(\theta_\lambda)\\0&
        -l2^{-\lfloor s_\lambda/2\rfloor}\sin(\theta_\lambda) +
         \cos(\theta_\lambda)\end{array}\right)=:
         \left(\begin{array}{cc} a & b\\0&c\end{array}\right).
    $$
    We claim that the quantities $a$ and $c$ are uniformly bounded from above
    and below, independent of $j,l$.
    To see this, consider the functions
    $$
        \tau(x):=\cos(\arctan(x)) \quad \mbox{and} \quad \rho(x):=x\sin(\arctan(x)) + \cos(\arctan(x)),
    $$
    which
    are bounded from above and below on $[-1,1]$, as an elementary discussion shows
    (in fact this boundedness holds on any compact interval).
    Clearly, we have
    $$
        a = \tau\left(-l2^{\lfloor \frac{s_\lambda}{2}\rfloor}\right)\quad \mbox{and} \quad
        c = \rho\left(-l2^{\lfloor \frac{s_\lambda}{2}\rfloor}\right)
    $$
    Since we are only considering indices with $\varepsilon = 0$, we have
    $\left|-l2^{\lfloor \frac{s_\lambda}{2}\rfloor}\right|\le 1$, which,
    by the above implies uniform upper and lower boundedness of the quantities
    $a,c$, i.e.,  there exist numbers $0<\delta_a\le \Delta_a <\infty$,
    $0<\delta_c\le \Delta_c <\infty$ such that for all
    $j,l$ we have
    \begin{equation*}
        \delta_a\le a\le \Delta_a\ \mbox{ and }\
        \delta_c\le c\le \Delta_c.
    \end{equation*}
    The matrix $D_{2^{-s_\lambda}}  R_{\theta_\lambda}^\top S_{l,s_\lambda}^{-\top}
    D_{2^{s_\lambda}}$ has the form
    $$
        \left(\begin{array}{cc}a & 2^{-s_\lambda/2}b \\0&c\end{array}\right).
    $$
    Using the upper boundedness of $a,b,c$ and the chain rule,
    we can estimate for any $|\beta |\le R$:
    $$
        |\partial^\beta \hat a^{(\lambda)}(\xi)|\lesssim \sup_{|\gamma|\le R}
        \left|\partial^\gamma
        \hat \psi\left(\left(\begin{array}{cc}a & 2^{-s_\lambda/2}b
         \\0&c\end{array}\right)\xi\right)\right|
        \lesssim \left( |\xi_1|+
        2^{-s_\lambda/2}|\xi_2|\right)^{M}.
    $$
    For the last estimate we utilized the moment estimate for $\hat\psi$,
    which is given by (\ref{eq:shearcond}).
    This gives us the moment property required in (\ref{eq:molcond}).

    Now we need to show the decay of $\partial^\beta \hat a^{(\lambda)}$
    for large frequencies $\xi$.
    Again, due to the fact that $a,b,c$ are bounded from above and $a,c$ from
    below, and utilizing the large frequency decay estimate in (\ref{eq:shearcond}),
    we can estimate
    \begin{eqnarray*}
        |\partial^\beta \hat a^{(\lambda)}(\xi)|
        &\lesssim &
        \sup_{|\gamma|\le R}
        \left|\partial^\gamma
        \hat \psi\left(\left(\begin{array}{cc}a & 2^{-s_\lambda/2}b
         \\0&c\end{array}\right)\xi\right)\right|
        \lesssim \left \langle\left|\left(\begin{array}{cc}a & 2^{-s_\lambda/2}b
         \\0&c\end{array}\right)\xi \right|
        \right\rangle^{-N_1} \langle c\xi_2 \rangle^{-N_2}
        \\
        &\lesssim &
        \left\langle |\xi|\right\rangle^{-N_1}\langle \xi_2\rangle^{-N_2}.
    \end{eqnarray*}
    The statement is proven.
\end{proof}
The following result shows that, just like the canonical parametrization, the shearlet
parametrization $\Lambda^\sigma$ is admissible.
\begin{lemma}
    The shearlet parametrization $(\Lambda^\sigma, \Phi^\sigma)$ is $k$-admissible
    for $k>2$.
\end{lemma}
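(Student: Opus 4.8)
The plan is to reduce the claim to the admissibility of the canonical parametrization (Lemma \ref{lem:canonadmin}), which was itself deduced from the covering estimate \eqref{eq:cd2}. Both suprema in Definition \ref{def:kadmiss} must be controlled. For the first one, $\sup_{\lambda\in\Lambda^\sigma}\sum_{\mu\in\Lambda^0}\omega(\lambda,\mu)^{-k}$, I fix $\lambda\in\Lambda^\sigma$, write $s_\lambda=j'$, group the canonical indices $\mu$ by their scale $s_\mu=j$, and observe that the inner sum is exactly of the form \eqref{eq:cd2} with reference point $\Phi^\sigma(\lambda)\in\mathbb{P}$ and $q=\min(j,j')$; since \eqref{eq:cd2} holds uniformly over the reference point in $\mathbb{P}$, the geometric-series argument of Lemma \ref{lem:canonadmin} applies verbatim. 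The substantial work lies in the second supremum, $\sup_{\lambda\in\Lambda^0}\sum_{\mu\in\Lambda^\sigma}\omega(\lambda,\mu)^{-k}$, where the summation now runs over the \emph{shearlet} point set; for this I must establish the analogue of \eqref{eq:cd2},
\begin{equation*}
  \sum_{\mu\in\Lambda^\sigma,\ s_\mu=j}\bigl(1+2^{q}d(\lambda,\mu)\bigr)^{-2}\lesssim 2^{2(j-q)_+},
\end{equation*}
uniformly over the reference point $\lambda\in\mathbb{P}$ and over $q$.

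To prove this transferred estimate it suffices to show that, at each scale $j$, the shearlet point set $\Phi^\sigma(\{\mu\in\Lambda^\sigma:s_\mu=j\})\subset\mathbb{P}$ has the same angular and spatial sampling density, with respect to the pseudometric $d$, as the canonical set. For the angular component I use that for $\varepsilon=0$ one has $\theta_\mu=\arctan(-l2^{-\lfloor j/2\rfloor})$ with $|l2^{-\lfloor j/2\rfloor}|\le 1$; since $\arctan$ is bi-Lipschitz on $[-1,1]$, these angles are comparable to the linear samples $-l2^{-\lfloor j/2\rfloor}$ and hence, up to a fixed factor, to the canonical angular sampling of spacing $\sim 2^{-\lfloor j/2\rfloor}$. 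The case $\varepsilon=1$ is identical after the reflection $\theta\mapsto\pi/2+\theta$ and transposition of the shear, and the two cones overlap consistently at the boundaries $\theta=\pm\pi/4$, so that no gaps or accumulations occur there.

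For the spatial component I compare the shearlet location map $(S_{l}^\varepsilon)^{-1}D^\varepsilon_{2^{-j}}$ with the canonical map $R_{-\theta_\mu}D_{2^{-j}}$. The key input is the matrix computation in the proof of Proposition \ref{prop:shearletmol}, which shows that $D_{2^{-j}}R_{\theta_\mu}^\top S_{l,j}^{-\top}D_{2^{j}}$ is upper triangular with diagonal entries bounded above and below uniformly in $j,l$ and a uniformly bounded off-diagonal entry. Consequently $(S_l^\varepsilon)^{-1}D^\varepsilon_{2^{-j}}$ and $R_{-\theta_\mu}D_{2^{-j}}$ differ by a matrix whose norm and whose inverse norm are uniformly bounded, so the shearlet location lattice is a parabolic grid comparable to the canonical one $R_{-\theta_\mu}D_{2^{-j}}\mathbb{Z}^2$. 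The quadratic term $|x_\lambda-x_\mu|^2$ and the anisotropic term $|\langle e_\lambda,x_\lambda-x_\mu\rangle|$ in $d(\lambda,\mu)$ therefore behave, for shearlet indices $\mu$, exactly as in the canonical case.

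Combining the two comparisons, the shearlet point set at scale $j$ is relatively separated and relatively dense with the same parabolic densities as the canonical set, which yields the transferred counting estimate above; feeding it into the geometric-series argument of Lemma \ref{lem:canonadmin} gives the second supremum, completing the proof. The main obstacle is precisely this transferred counting estimate: one must control the shear-sampled, anisotropically scaled lattice uniformly in the reference point and recover the sharp $2^{2(j-q)_+}$ dependence, the subtlety being that the dilation $D_{2^{j}}$ interacts with the shear $S_{l,j}$ differently than with the rotation $R_{\theta_\mu}$; the uniform two-sided bounds on the triangular matrix furnished by Proposition \ref{prop:shearletmol} are exactly what render this interaction harmless.
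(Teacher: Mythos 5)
Your overall strategy coincides with the paper's: both suprema are reduced to a counting estimate of the form \eqref{eq:cd2}, the first by invoking \eqref{eq:cd2} with an arbitrary reference point in $\mathbb{P}$, the second by establishing its shearlet analogue
\begin{equation*}
\sum_{\mu\in\Lambda^\sigma,\ s_\mu=j}\bigl(1+2^{q}d(\lambda,\mu)\bigr)^{-2}\lesssim 2^{2(j-q)_+},
\end{equation*}
and the two comparability facts you single out --- that $\arctan$ is bi-Lipschitz on $[-1,1]$, so $|\theta_\mu|\gtrsim|l2^{-\lfloor j/2\rfloor}|$, and that $|S_l^{-1}D_{2^{-j}}k|\gtrsim|D_{2^{-j}}k|$ --- are exactly the two inputs the paper uses. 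However, there is a genuine gap at the step you yourself flag as the main obstacle: the transferred counting estimate is asserted, not proved. ``Same angular and spatial sampling density'' does not by itself yield the quantitative bound $2^{2(j-q)_+}$, and, more seriously, uniform two-sided operator-norm bounds on the triangular comparison matrix control Euclidean lengths but \emph{not} the non-quadratic directional term $|\langle e_\lambda,x_\lambda-x_\mu\rangle|$ in $d$. The map from the rotated grid to the sheared grid is $T=S_{l,j}^{-1}R_{\theta_\mu}$, which is bounded with bounded inverse but is not close to a multiple of a rotation (its off-diagonal entry is of size $|\tan\theta_\mu|$, up to $1$ on the cone), so $|\langle e_\lambda,Tx\rangle|$ need not be comparable to $|\langle e_\lambda,x\rangle|$ for a general reference direction $e_\lambda$; since $|\langle e_\lambda,\cdot\rangle|$ enters $d$ linearly while only $|x|^2$ enters quadratically, the discrepancy cannot be absorbed for free. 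This is precisely where the interaction of $D_{2^j}$ with the shear differs from its interaction with a rotation, and a comparability-of-lattices argument at the level of matrix norms does not resolve it.

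The paper closes this gap by brute force: after normalizing the reference to $\theta=0$, $x=0$ (and restricting to $\varepsilon=0$), it treats the easy case $q>j$ by a uniform constant, and for $j\geq q$ writes the scale-$j$ sum explicitly in the variables $y=l2^{-\lfloor j/2\rfloor}$, $x_1=2^{-j}k_1$, $x_2=2^{-\lfloor j/2\rfloor}k_2$, in which the directional term becomes exactly $|x_1-x_2y|$; the sum is then dominated, as a Riemann sum, by the integral $\int_{\mathbb{R}^2}\frac{dx}{2^{-3j/2}}\int_{\mathbb{R}}\frac{dy}{2^{-j/2}}\bigl(1+2^q(y^2+x_2^2+|x_1-x_2y|)\bigr)^{-2}$, which the parabolic substitution $x_1\to 2^qx_1$, $x_2\to 2^{q/2}x_2$, $y\to 2^{q/2}y$ evaluates to a constant times $2^{2(j-q)}$. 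To complete your argument you would either need to carry out this explicit count, or upgrade your comparison of the location maps to one that tracks the component of $x_\lambda-x_\mu$ along $e_\lambda$ (for instance by showing $T^\top e_\lambda=c\,e_\lambda+O(|\theta_\lambda-\theta_\mu|)$ with $c$ bounded above and below, so that the error is absorbed by the angular term of $d$); as it stands, neither is done.
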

\begin{proof}
    We show the analogue to Equation (\ref{eq:cd2}) for the shearlet parametrization, the rest of
    the proof is analogous to the proof of Lemma \ref{lem:canonadmin}.
    Hence, we aim to prove that
    \begin{equation}\label{eq:cd2shear}
        \sum_{\lambda \in \Lambda^\sigma, s_\lambda  = j}(1 + 2^{q}d(\mu,\lambda))^{-2}\lesssim
        2^{2(j-q)_+}
    \end{equation}
    for any $q$ and $\mu\in \Lambda^0$.
    Without loss of generality we assume that $\theta_{\mu}=0$, $x_\mu=0$
    (the general case follows identical arguments with slightly more notational effort).
    Further, as before
    we only restrict ourselves to the case $\varepsilon  = 0$, the other case being exactly
    the same.

    First we consider the case $q>j$. In this situation, the expression in (\ref{eq:cd2shear}) can be bounded
    by a uniform constant.

    Now we turn to the other case $j\geq q$. In this case we use the fact that,
    whenever $|l|\lesssim 2^{-j/2}$, we have
    $$
        \left|\mbox{arctan}\left(-l2^{-\lfloor \frac{j}{2}\rfloor}\right)\right|\gtrsim
        \left|l2^{-\lfloor \frac{j}{2}\rfloor}\right|
        \mbox{ and }|S_l^{-1}D_{2^{-j}}k| \gtrsim |D_{2^{-j}}k|,
    $$
    to estimate (\ref{eq:cd2shear}) by
    $$
        \sum_{l}\sum_{k}
        \left(1 + 2^q\left(\left|l2^{-\lfloor \frac{j}{2}\rfloor}\right|^2+
        \left|2^{-\lfloor \frac{j}{2}\rfloor}k_2\right|^2+
        \left|2^{-j}k_1 - l2^{-\lfloor \frac{j}{2}\rfloor}k_22^{-\lfloor \frac{j}{2}\rfloor}\right|\right)\right)^{-2}.
    $$
    This can be interpreted as a Riemann sum and bounded (up to a constant) by the corresponding integral
    $$
        \int_{\mathbb{R}^2}\frac{dx}{2^{-3j/2}}\int_\mathbb{R} \frac{dy}{2^{-j/2}}
        \left(1 + 2^q(y^2 + x_2^2 + |x_1 - x_2y|)\right)^{-2},
    $$
    compare \cite[Equation (A.3)]{CD02}.
    This integral is bounded by a constant times $2^{2(j-q)}$ as can be seen by making the
    substitution $x_1 \to 2^q x_1$, $x_2\to 2^{q/2}x_2$, $y\to 2^{q/2}y$.
    This shows (\ref{eq:cd2shear}) and thus completes the proof.
\end{proof}
These results show that the parabolic molecule concept is a unification of previous systems.
In the remainder of this section we examine the shearlet constructions which are on
the market and show that they indeed fit into our framework.
%
\subsubsection{Bandlimited Shearlets}
%
We start with the classical shearlet construction which yields bandlimited generators.
We consider two functions $\psi_1,\;\psi_2$ satisfying
$$
    \mbox{supp }\hat \psi_1 \subset \left [ -\frac{1}{2},-\frac{1}{16}\right]\cup
    \left [ \frac{1}{16},\frac{1}{2}\right],\quad
    \mbox{supp }\hat\psi_2 \subset \left[-1,1\right],
$$
$$
    \sum_{j\geq 0}\left|\hat\psi_1\left(2^{-j}\omega\right)\right|^2 = 1 \quad \mbox{for }
    |\omega| \geq \frac{1}{8},
$$
and
$$
    \sum_{l = -2^{\lfloor j/2\rfloor}}^{2^{\lfloor j/2\rfloor}}
    \left|\hat\psi_2\left(2^{\lfloor j/2\rfloor}\omega + l\right)\right|^2 = 1\quad
    \mbox{for }|\omega|\le 1.
$$
Now we define our basic shearlet $\psi^0$ via
$$
    \hat \psi^0(\xi) := \hat\psi_1(\xi_1)\hat\psi_2\left(\frac{\xi_2}{\xi_1}\right).
$$
It follows from standard arguments that the system
$$
    \Sigma^0:=\left\{2^{3j/4}\psi^0 \left(D^0_{2^j}
    S_{l,j}^0 \cdot - k\right):\;  j\geq 0,\ l  = -2^{\lfloor \frac{j}{2}\rfloor}, \cdots
    , 2^{\lfloor \frac{j}{2}\rfloor}\right\}
$$
constitutes a Parseval frame for the Hilbert space
$L_2\left(\mathcal{C}\right)^\lor$ with
$$
    \mathcal{C}:=\left\{\xi:\; |\xi_1|\geq \frac{1}{8},\;\frac{|\xi_2|}{|\xi_1|}\le 1\right\}.
$$
In the same way we can construct a Parseval frame $\Sigma^1$ for
$L_2\left(\mathcal{C}'\right)^\lor$,
$$
    \mathcal{C}':=\left\{\xi:\; |\xi_2|\geq \frac{1}{8},\;\frac{|\xi_1|}{|\xi_2|}\le 1\right\}.
$$
by reversing the coordinate axes.
Finally, we can consider a Parseval frame
$$
    \Phi:=\left\{\varphi(\cdot - k):\; k\in\mathbb{Z}^2\right\}
$$
for the Hilbert space $L_2\left(\left[-\frac{1}{8},\frac{1}{8}\right]^2\right)^\lor$.
\begin{prop}
    The system $\Sigma:=\Sigma^0\cup \Sigma^1\cup \Phi$ constitutes a
    shearlet frame which is a system of parabolic molecules of arbitrary order.
\end{prop}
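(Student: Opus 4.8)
The plan is to reduce the claim to Proposition \ref{prop:shearletmol} by showing that $\Sigma$ is in fact a system of \emph{shearlet molecules} of arbitrary order $(R,M,N_1,N_2)$. Since in the bandlimited construction the generators $\psi^\varepsilon_{j,l,k}$ do not depend on $j,l,k$ — they equal the single fixed function $\psi^0$ (respectively $\psi^1$), with $\varphi$ governing the low-frequency indices $j=0$ — the molecule condition (\ref{eq:shearcond}) collapses to a statement about the three fixed functions $\hat\psi^0$, $\hat\psi^1$, and $\hat\varphi$. Once this is verified, Proposition \ref{prop:shearletmol} immediately yields that $\Sigma$ is a system of parabolic molecules of order $(R,M,N_1,N_2)$ for every choice of $(R,M,N_1,N_2)$, that is, of arbitrary order, associated with the shearlet parametrization $(\Lambda^\sigma,\Phi^\sigma)$. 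The frame property is the standard one: $\Sigma^0$ and $\Sigma^1$ are Parseval frames for $L_2(\mathcal{C})^\lor$ and $L_2(\mathcal{C}')^\lor$, $\Phi$ is a Parseval frame for $L_2([-\tfrac18,\tfrac18]^2)^\lor$, and the frequency regions $[-\tfrac18,\tfrac18]^2$, $\mathcal{C}$, and $\mathcal{C}'$ cover $\mathbb{R}^2$, so that the partition-of-unity conditions on $\psi_1,\psi_2,\varphi$ assemble these into a Parseval frame for $L_2(\mathbb{R}^2)$.

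The heart of the argument is the verification of (\ref{eq:shearcond}) for $\hat\psi^0(\xi)=\hat\psi_1(\xi_1)\hat\psi_2(\xi_2/\xi_1)$; the case of $\hat\psi^1$ then follows from the coordinate reversal used to define $\Sigma^1$. The decisive observation is that $\hat\psi^0$ has \emph{compact} frequency support that is moreover bounded away from the $\xi_2$-axis: indeed $\hat\psi_1(\xi_1)\neq0$ forces $\tfrac1{16}\le|\xi_1|\le\tfrac12$, and $\hat\psi_2(\xi_2/\xi_1)\neq0$ forces $|\xi_2|\le|\xi_1|\le\tfrac12$. On this region all three weight factors on the right-hand side of (\ref{eq:shearcond}) are bounded above and below by positive constants: $\min(1,|\xi_1|)^M\asymp1$ since $|\xi_1|\ge\tfrac1{16}$, while $\langle|\xi|\rangle^{-N_1}\asymp1$ and $\langle\xi_2\rangle^{-N_2}\asymp1$ since $|\xi|$ and $|\xi_2|$ are bounded. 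Consequently (\ref{eq:shearcond}) is equivalent to the single requirement that $\hat\psi^0$ together with all its partial derivatives up to order $R$ be uniformly bounded; as there is no dependence on $j,l,k$, the implicit constants are trivially uniform over $\Lambda^\sigma$.

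It therefore remains to check that $\hat\psi^0\in C^\infty$ with bounded derivatives, which is where I would spend the only real care. Assuming, as is standard in the bandlimited construction, that $\psi_1,\psi_2$ are chosen so that $\hat\psi_1,\hat\psi_2$ are $C^\infty$, the factor $\hat\psi_1(\xi_1)$ is smooth with bounded derivatives, and for the composite factor $\hat\psi_2(\xi_2/\xi_1)$ I would apply Fa\`a di Bruno's formula: the inner map $(\xi_1,\xi_2)\mapsto\xi_2/\xi_1$ is smooth with uniformly bounded derivatives of every order on the support, precisely because $\xi_1$ is bounded away from $0$ there, and $\hat\psi_2$ has bounded derivatives by smoothness and compact support. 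A Leibniz expansion of $\partial^\beta[\hat\psi_1(\xi_1)\hat\psi_2(\xi_2/\xi_1)]$ then gives the desired uniform bound for all $|\beta|\le R$ and arbitrary $R$. The low-frequency piece is immediate: for the indices $j=0$ the generator is $\varphi$, whose Fourier transform is smooth and supported in $[-\tfrac18,\tfrac18]^2$, so its molecule estimate reduces again to boundedness of finitely many derivatives, the moment factor $\min(1,\cdot)^M$ being identically $1$ at scale $s_\lambda=0$. Thus $\Sigma$ is a system of shearlet molecules of arbitrary order, and the claim follows from Proposition \ref{prop:shearletmol}. The only genuine obstacle — the behaviour of the singular change of variables $\xi_2/\xi_1$ — is defused entirely by the support condition $|\xi_1|\ge\tfrac1{16}$.
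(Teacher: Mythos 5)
Your argument is correct and follows the same route as the paper: reduce to Proposition \ref{prop:shearletmol} and verify condition (\ref{eq:shearcond}) for the generators, which the paper disposes of in one sentence by appealing to bandlimitedness. You simply spell out the details the paper omits --- that the compact frequency support bounded away from the $\xi_1=0$ axis makes the right-hand side of (\ref{eq:shearcond}) comparable to a constant on $\supp\hat\psi^0$, so the condition reduces to uniform boundedness of the derivatives of $\hat\psi^0$, which holds by smoothness of $\hat\psi_1,\hat\psi_2$.
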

\begin{proof}
    To show this, by Proposition \ref{prop:shearletmol}, all we need to show is
    that the generators $\psi^0,\; \psi^1$ satisfy (\ref{eq:shearcond})
    for arbitrary orders $(R,M,N_1,N_2)$. This, however, follows directly from
    the fact that the underlying basis functions are bandlimited.
\end{proof}
%
%
\subsubsection{Bandlimited Shearlets with Nice Duals}\label{sec:shearletswithduals}
%
The bandlimited shearlet frame $\Sigma$ as described above suffers from the
fact that we do not know much about its dual frames. In particular, we do not
know whether there exists a dual frame which is also a system of parabolic molecules.
For several results such as those in Subsection \ref{sec:functionspaces} it is
however necessary to have such a construction. In \cite{Grohs2011a} this problem
was successfully resolved by carefully glueing together the two bandlimited frames associated with
the two frequency cones. In other words, there exist shearlet frames $\Sigma$
with dual frame $\Sigma'$ such that both $\Sigma$ and $\Sigma'$ form systems
of parabolic molecules of arbitrary order.
%
\subsubsection{Smooth Parseval Frames of Shearlets}
%
In \cite{Guo2012a} another modification of the bandlimited shearlet construction
is given by carefully glueing together two boundary elements along
the seamlines with angle $\pi/4$. It can be shown that this yields
a Parseval frame with smooth and well-localized elements.
Again, it is straightforward to check that the system constructed in \cite{Guo2012a}
constitutes a system of parabolic molecules of arbitrary order.
%
\subsubsection{Compactly Supported Shearlets}
%
We next analyze compactly supported shearlets \cite{Kittipoom2010}, and prove that they also form instances of parabolic molecules.
Currently known constructions of compactly supported shearlets involve separable
generators, i.e.,
\begin{equation}\label{eq:shearsep}
    \psi^0(x_1,x_2):=\psi_1(x_1)\psi_2(x_2),\quad \psi^1(x_1,x_2):=\psi^0(x_2,x_1).
\end{equation}
with a wavelet $\psi_1$ and a scaling function $\psi_2$.
We would like to find conditions on $\psi_1,\psi_2$ such that
(\ref{eq:shearcond}) is satisfied for given parameters $(R,M,N_1,N_2)$,
i.e., that the associated shearlet frame forms a system of shearlet molecules.

First we define the crucial property of vanishing moments for univariate
wavelets.
\begin{definition}\label{def:vanishingmoments}
    A univariate function $g$ possesses $M$ vanishing moments
    if
    $$
        \int_{\mathbb{R}} g(x) x^k dx = 0,\quad \mbox{for all }k=0,\dots , M-1.
    $$
\end{definition}
In the frequency domain, vanishing moments are characterized
by polynomial decay near zero, as is well known.
\begin{lemma}\label{lem:momentsfreq}
    Suppose that $g:\mathbb{R}\to \mathbb{C}$
    is continuous, compactly supported and possesses $M$ vanishing moments.
    Then
    $$
        |\hat g(\xi)|\lesssim \min\left(1,|\xi|\right)^M.
    $$
\end{lemma}
\begin{proof}
    First, note that, since $g$ is continuous and compactly supported, it
    is in $L_1(\mathbb{R})$ and therefore its Fourier transform is bounded.
    This takes care of frequencies $\xi$ with $|\xi|\geq 1$.
    For small $\xi$ observe that, up to a constant
    we have
    $$
        \int_{\mathbb{R}} g(x) x^k dx = \left(\frac{d}{d\xi}\right)^k\hat g(0).
    $$
    Hence, if $g$ possesses $M$ vanishing moments, all
    derivatives of order $<M$ of the Fourier transform $\hat g$ vanish
    at $0$. Furthermore, since $g$ is compactly supported, its Fourier transform
    is analytic. Therefore
    $$
        |\hat g(\xi)|\lesssim |\xi|^M,
    $$
    which proves the claim.
\end{proof}
\begin{prop}
    Assume that $\psi_1\in C^{N_1}$ is a compactly supported wavelet with $M+R$ vanishing moments,
    and $\psi_2\in C^{N_1+N_2}$ is also compactly supported.
    Then, with $\psi^\varepsilon$ defined by (\ref{eq:shearsep}), the
    associated shearlet system
    $\Sigma$ constitutes a system of parabolic molecules of order $(R,M,N_1,N_2)$.
\end{prop}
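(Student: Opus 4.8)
The plan is to reduce everything to verifying the shearlet-molecule condition \eqref{eq:shearcond} and then invoke Proposition \ref{prop:shearletmol}. It suffices to treat $\varepsilon = 0$, since the case $\varepsilon = 1$ follows from the symmetry $\psi^1(x_1,x_2) = \psi^0(x_2,x_1)$ in \eqref{eq:shearsep}. Because the generator factors as $\hat\psi^0(\xi_1,\xi_2) = \hat\psi_1(\xi_1)\hat\psi_2(\xi_2)$, a derivative $\partial^\beta\hat\psi^0$ with $\beta = (\beta_1,\beta_2)$ splits into the product $\partial_{\xi_1}^{\beta_1}\hat\psi_1(\xi_1)\cdot\partial_{\xi_2}^{\beta_2}\hat\psi_2(\xi_2)$, so I would estimate each univariate factor separately and multiply.

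The workhorse is the elementary identity $\partial_\xi^m\hat g(\xi) = \widehat{(-2\pi i x)^m g}(\xi)$: differentiating a Fourier transform amounts to multiplying $g$ by a monomial, which preserves both compact support and membership in $C^k$. For the first factor, $x^{\beta_1}\psi_1$ remains compactly supported and $C^{N_1}$, and it retains $M + R - \beta_1 \ge M$ vanishing moments since $\beta_1 \le |\beta| \le R$. Hence Lemma \ref{lem:momentsfreq} yields the moment bound $|\partial_{\xi_1}^{\beta_1}\hat\psi_1(\xi_1)|\lesssim\min(1,|\xi_1|)^M$, while compact support together with $C^{N_1}$-smoothness yields the decay $|\partial_{\xi_1}^{\beta_1}\hat\psi_1(\xi_1)|\lesssim\langle\xi_1\rangle^{-N_1}$. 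These two bounds combine into $\min(1,|\xi_1|)^M\langle\xi_1\rangle^{-N_1}$ by using the moment bound on the range $|\xi_1|\le 1$ and the decay bound on $|\xi_1|\ge 1$. For the second factor, $x^{\beta_2}\psi_2$ is compactly supported and $C^{N_1+N_2}$, so smoothness alone gives $|\partial_{\xi_2}^{\beta_2}\hat\psi_2(\xi_2)|\lesssim\langle\xi_2\rangle^{-(N_1+N_2)}$.

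Multiplying the two factors produces the estimate $\min(1,|\xi_1|)^M\langle\xi_1\rangle^{-N_1}\langle\xi_2\rangle^{-N_1-N_2}$, and the final step is to show this dominates the target $\min(1,|\xi_1|)^M\langle|\xi|\rangle^{-N_1}\langle\xi_2\rangle^{-N_2}$ of \eqref{eq:shearcond}. This reduces to the elementary inequality $\langle\xi_1\rangle^{-N_1}\langle\xi_2\rangle^{-N_1}\lesssim\langle|\xi|\rangle^{-N_1}$, which follows at once from $\langle\xi_1\rangle^2\langle\xi_2\rangle^2 = (1+\xi_1^2)(1+\xi_2^2)\ge 1 + \xi_1^2 + \xi_2^2 = \langle|\xi|\rangle^2$. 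This also explains why the hypothesis demands $\psi_2\in C^{N_1+N_2}$ rather than merely $C^{N_2}$: one factor of $N_1$ in the $\xi_2$-decay is spent trading the isotropic weight $\langle|\xi|\rangle$ for the separable weight $\langle\xi_1\rangle$.

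I expect the only genuine subtlety to be the bookkeeping of vanishing moments: one must verify that differentiation in $\xi_1$ up to order $R$ never drops the moment count below $M$, which is exactly guaranteed by assuming $M+R$ vanishing moments on $\psi_1$. Everything else is a routine combination of Lemma \ref{lem:momentsfreq} with the standard fact that a compactly supported $C^k$ function has Fourier transform decaying like $\langle\cdot\rangle^{-k}$.
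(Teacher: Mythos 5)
Your proposal is correct and follows essentially the same route as the paper's proof: reduce to condition (\ref{eq:shearcond}) via Proposition \ref{prop:shearletmol}, convert frequency derivatives into multiplication by $x^\beta$, use compact support plus $C^{N_1}$/$C^{N_1+N_2}$ smoothness for the decay $\langle\xi_1\rangle^{-N_1}\langle\xi_2\rangle^{-N_1-N_2}$ combined with $\langle\xi_1\rangle\langle\xi_2\rangle\gtrsim\langle|\xi|\rangle$, and use the surplus of $R$ vanishing moments together with Lemma \ref{lem:momentsfreq} for the moment factor. The only difference is presentational: you exploit separability to factor the bivariate estimate into two univariate ones, whereas the paper bounds the bivariate function $x^\beta\psi$ directly, and your explicit case split $|\xi_1|\le 1$ versus $|\xi_1|\ge 1$ when merging the moment and decay bounds is a small point the paper leaves implicit.
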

\begin{proof}
    In view of Proposition \ref{prop:shearletmol} we need to show that
    the estimate (\ref{eq:shearcond}) holds.
    We only consider the case $\varepsilon = 0$ and drop the superscript.
    The inverse Fourier transform of $\partial^\beta \psi$ is, up
    to a constant given by $x^\beta \psi(x)$.
    We first handle the case $|\xi_1|>1$.
    By smoothness and compact support of $\psi_1,\psi_2$,
    we find that for any $|\beta | \le R$ the function
    $$
        \partial^{(N_1,N_1+N_2)} x^\beta \psi
    $$
    is in $L_1(\mathbb{R})$, hence it has a bounded Fourier transform
    which is given, up to a constant by
    $$
        \xi_1^{N_1} \xi_2^{N_1+N_2}\partial^\beta \hat \psi(\xi).
    $$
    It follows that the function
    $$
        \langle \xi_1 \rangle^{N_1}\langle \xi_2 \rangle^{N_1+N_2}
        \partial^\beta \hat \psi(\xi)
    $$
    is bounded.
    Using the simple fact that
    $\langle x \rangle \langle y \rangle \lesssim \langle \sqrt{x^2+y^2}\rangle$,
    we get
    $$
        \partial^\beta\psi(\xi) \lesssim
        \langle \| \xi\|\rangle^{-N_1}\langle \xi_2\rangle^{-N_2}.
    $$
    Now let $\beta$ be such that $|\beta_1|<R$.
    Then
    the function
    $$
        x^\beta \psi(x)= x_1^{\beta_1}x_2^{\beta_2}\psi_1(x_1)\psi_2(x_2),
    $$
    restricted to the variable $x_1$ possesses at least
    $M$ vanishing moments, due to the assumption that $\psi_1$ possesses
    $M+R$ vanishing moments. Lemma \ref{lem:momentsfreq} then proves
    the decay of order $\min\left(1,|\xi_1|^M\right)$ for the
    derivatives of $\hat \psi$.
\end{proof}
\begin{remark}
    Several assumptions on the generators $\psi^1,\ \psi^2$
    could be weakened, for instance the separability of the
    shearlet generators is not crucial for the arguments to go through.
    In particular, our arguments nowhere require neither compact
    support nor bandlimitedness.
\end{remark}
%
%
\subsubsection{Shearlet Molecules of \cite{GL08}}
\label{subsec:shearletmole}
%
In \cite{GL08} the results of \cite{CD02} are established for
shearlets instead of curvelets. A crucial tool in the proof is the
introduction of a certain type of \emph{shearlet molecules} which are
similar to curvelet molecules discussed above, but tailored to the
shearing operation rather than rotations.
\begin{definition}
    Let $\Lambda^\sigma$ be the shearlet index set as in (\ref{eq:shearindex}).
    A family $(m_\lambda)_{\lambda \in \Lambda^\sigma}$ is
    called a \emph{family of shearlet molecules} of
    regularity $R$ if
    it can be written as
    $$
        m_\lambda (x) =
        2^{3s_\lambda/4}
        a^{(\lambda)}
        \left(D_{2^{s_\lambda}}^\varepsilon S_{l,j}^\varepsilon x - k \right)
    $$
    such that for all $|\beta| \le R$ and each $N = 0,1,2,\dots$
    \begin{equation*}
        |\partial^\beta a^{(\lambda)}(x)|\lesssim \langle x\rangle^{-N}
    \end{equation*}
    and for $M = 0,1,\dots $
    \begin{equation*}
        |\hat{a}^{(\lambda)}(\xi)|\lesssim
        \min\left(1,2^{-s_\lambda} + |\xi_1| + 2^{-s_\lambda/2}|\xi_2|\right)^M.
    \end{equation*}
\end{definition}
By the results in \cite{GL08}, the shearlet molecules defined
therein satisfy the inequality (\ref{eq:shearcond}) with the choice
of parameters $(R,N,N_1,N_2) = (\infty,\infty, R/2,R/2)$. Therefore, in view
of Proposition \ref{prop:shearletmol}, shearlet molecules of
regularity $R$ as defined in \cite{GL08} form systems of
parabolic molecules of order $(\infty,\infty,R/2,R/2)$. Thus, we derive
an analogous result to Lemma \ref{lem:curvemol} for shearlet molecules:

\begin{prop}
    A system of shearlet molecules of regularity $R$ constitutes a system of
    parabolic molecules of order $(\infty, \infty , R/2, R/2)$.
\end{prop}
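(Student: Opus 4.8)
The statement is the shearlet analogue of Proposition \ref{lem:curvemol}, and the defining conditions on the generator $a^{(\lambda)}$ of a shearlet molecule of regularity $R$ are literally identical to those of a curvelet molecule, namely rapid spatial decay of all derivatives up to order $R$ together with infinitely many almost vanishing moments; the only difference is that $a^{(\lambda)}$ is scaled and sheared by $D_{2^{s_\lambda}}^\varepsilon S_{l,j}^\varepsilon$ rather than scaled and rotated. The plan is therefore to proceed in two steps: first verify that the generators satisfy the shearlet molecule estimate (\ref{eq:shearcond}) with order $(R,M,N_1,N_2)=(\infty,\infty,R/2,R/2)$, and then invoke Proposition \ref{prop:shearletmol} to convert this into the parabolic molecule condition (\ref{eq:molcond}) via the shearlet parametrization $\Phi^\sigma$.

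For the smoothness and decay part of (\ref{eq:shearcond}) I would argue exactly as in the curvelet case: the bound $|\partial^\beta a^{(\lambda)}(x)|\lesssim \langle x\rangle^{-N}$, valid for all $N$ and all $|\beta|\le R$, means that $x^\gamma a^{(\lambda)}$ and all of its spatial derivatives up to order $R$ are integrable for every multi-index $\gamma$. Passing to the Fourier side, this shows that every frequency derivative $\partial^\beta\hat a^{(\lambda)}$ decays like $\langle|\xi|\rangle^{-R}$; since $\langle\xi_2\rangle\le\langle|\xi|\rangle$ one may split $\langle|\xi|\rangle^{-R}\le\langle|\xi|\rangle^{-R/2}\langle\xi_2\rangle^{-R/2}$, which supplies the factors with $N_1=N_2=R/2$ (and, as this holds for every $\beta$, also $R=\infty$). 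For the moment part I would appeal to the results of \cite{GL08}, which show that $\hat a^{(\lambda)}$ and all its derivatives vanish to arbitrarily high order along $\xi_1=0$, i.e. $|\partial^\beta\hat a^{(\lambda)}(\xi)|\lesssim\min(1,|\xi_1|)^M$ for every $M$; this is the $M=\infty$ ingredient of (\ref{eq:shearcond}), playing the same role as the derivative moment estimate borrowed from \cite{CD02} in Proposition \ref{lem:curvemol}.

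The delicate point is precisely this moment estimate. The abstract bound $\min(1,2^{-s_\lambda}+|\xi_1|+2^{-s_\lambda/2}|\xi_2|)^M$ appearing in the definition is strictly weaker than the clean $\min(1,|\xi_1|)^M$ required by (\ref{eq:shearcond}), since the former never actually vanishes on the line $\xi_1=0$ but only becomes of order $2^{-s_\lambda M}$ there. One therefore cannot simply substitute the defining estimate; reconciling the two requires the finer structural information about the \cite{GL08} generators, namely that they genuinely carry high-order vanishing moments in the $x_1$-direction rather than merely scale-dependent smallness. This is the only step where the detailed analysis of \cite{GL08} is needed, and I expect it to be the main obstacle. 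Once (\ref{eq:shearcond}) has been established with order $(\infty,\infty,R/2,R/2)$, Proposition \ref{prop:shearletmol} applies verbatim and identifies the system as a family of parabolic molecules of order $(\infty,\infty,R/2,R/2)$ associated to $\Phi^\sigma$, which completes the argument.
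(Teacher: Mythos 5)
Your argument is the paper's own: the paper disposes of this proposition in one line by asserting that, by the results of \cite{GL08}, the generators satisfy (\ref{eq:shearcond}) with parameters $(\infty,\infty,R/2,R/2)$ and then invoking Proposition \ref{prop:shearletmol}, and your Fourier-duality derivation of the factors $\langle|\xi|\rangle^{-R/2}\langle\xi_2\rangle^{-R/2}$ is exactly the splitting used in the proof of Proposition \ref{lem:curvemol}. One remark on the step you flag as the main obstacle: it can be bypassed. The parabolic molecule condition (\ref{eq:molcond}) itself only demands the weak moment bound $\min\left(1,2^{-s_\lambda}+|\xi_1|+2^{-s_\lambda/2}|\xi_2|\right)^M$, and this weak form is stable under the shear-to-rotation conjugation performed in the proof of Proposition \ref{prop:shearletmol}: on the Fourier side the parabolic-molecule generator is the shearlet generator precomposed with a matrix $T$ satisfying $(T\xi)_1=a\xi_1+2^{-s_\lambda/2}b\xi_2$ and $(T\xi)_2=c\xi_2$ with $a,b,c$ uniformly bounded, whence $2^{-s_\lambda}+|(T\xi)_1|+2^{-s_\lambda/2}|(T\xi)_2|\lesssim 2^{-s_\lambda}+|\xi_1|+2^{-s_\lambda/2}|\xi_2|$. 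Thus the moment estimate appearing in the definition of shearlet molecules of regularity $R$ (which, as in Proposition \ref{lem:curvemol}, extends to all derivatives by \cite{GL08}) transfers directly to (\ref{eq:molcond}); the clean bound $\min(1,|\xi_1|)^M$ is needed only if one insists on literally verifying (\ref{eq:shearcond}) as an intermediate step.
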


Let us finish this section on examples of parabolic molecules by making the following
\begin{remark}
    By now we hope to have convinced the reader that there is a whole
    zoo of different constructions in the literature which can all
    be put under one roof using the concept of parabolic molecules.
\end{remark}
%

%
\section{Applications}
\label{sec:appl}
%
In this section we discuss selected applications of the developed theory.
A particular focus will be on approximation properties of parabolic molecule
systems
$(m_\lambda)_{\lambda\in \Lambda}$, especially
if they form a frame, e.g.,
$$
    \|f\|_{L_2(\mathbb{R}^2)}\sim \sum_{\lambda\in \Lambda}|\langle f, m_\lambda \rangle|^2,
$$
see e.g., \cite{Christensen2003a}. It is well known, that in this case one can
robustly represent any function $f\in L_2(\mathbb{R}^2)$ as a sum
$$
    f = \sum_{\lambda\in \Lambda}\langle f , m_\lambda\rangle \tilde m_\lambda,
$$
where $(\tilde m_\lambda)_{\lambda\in \Lambda}$ is a \emph{dual frame}.
Approximation properties of a frame system $(m_\lambda)_{\lambda\in \Lambda}$
are usually studied in terms of the sparsity of the coefficient sequence
$
    \left(\langle f , m_\lambda\rangle\right)_{\lambda\in \Lambda}
$.
Below, in Subsection \ref{subsec:sparse} we show that essentially any
frame system which consists of parabolic molecules satisfies the same
approximation properties as the curvelet frame constructed in \cite{Candes2005a}.
This, in particular, implies almost optimal approximation results for the class
of cartoon images (see below for a definition) for all constructions mentioned in Section
\ref{sec:exam}, for instance compactly supported or bandlimited shearlets.
The above-mentioned approximation property may actually be regarded as the main
raison d'\`etre of curvelet-like systems and is therefore of central importance.

In Subsection \ref{sec:functionspaces} we go further and show that practically any reasonable
definition of a function space norm based on a coefficient sequence
$
    \left(\langle f , m_\lambda\rangle\right)_{\lambda\in \Lambda}
$
is equivalent for any two frame systems consisting of parabolic molecules.
This shows for instance that finiteness of a function space norm defined via the
curvelet frame implies finiteness of the analogous norm defined via compactly supported
shearlet frames, whenever the generators possess sufficient smoothness and directional vanishing
moments. This result has not been known before.
\begin{remark}
    It is in general not the case that the dual frame $(\tilde m_\lambda)_{\lambda\in \Lambda}$ of
    a frame $(m_\lambda)_{\lambda\in \Lambda}$ of parabolic molecules needs to consist of parabolic molecules, too.
    However, it can be shown, based on the concept of \emph{intrinsic localization}, that
    the so-called canonical dual frame of $(m_\lambda)_{\lambda\in \Lambda}$ is of a similar form in
    a certain sense \cite{Grohs2012}.
\end{remark}
%
%
\subsection{Sparse Image Approximation}
\label{subsec:sparse}

Multivariate problems are typically governed by anisotropic features such as edges in images. A customarily
employed model for such data is the class $\cE^2(\RR^2)$ of so-called {\em cartoon images} which is
defined by
\[
\cE^2(\RR^2) = \{f \in L^2(\RR^2) : f = f_0 + f_1 \cdot \chi_{B}\},
\]
where $B \subset [0,1]^2$ with $\partial B$ a closed $C^2$-curve and $f_0,f_1 \in C_0^2([0,1]^2)$. Questions
of efficient encoding of such a model class can be formulated in terms of optimal approximation properties.
Given a frame system $(m_\lambda)_\lambda \subseteq L_2(\RR^2)$, an appropriate measure for
the approximation behavior is the decay rate of the {\em error of best $N$-term approximation}, i.e., of
$\norm{f - f_N}_2^2$, where $f_N$ denotes the best $N$-term approximation by $(m_\lambda)_\lambda$
of some $f \in \cE^2(\RR^2)$, obtained as
$$
    f_N = \argmin \|f -  \sum_{\lambda \in \Lambda_N}c_\lambda m_\lambda\|_2^2 \quad
    \mbox{s.t.}\quad \#\Lambda_N \le N.
$$
A small technical problem occurs due to the fact that the representation
system might not form an orthonormal basis in which case the computation of the best $N$-term approximation
is far from being understood. To circumvent this problem, usually the error of approximation by the $N$
largest coefficients of $\left(\langle f , m_\lambda\rangle\right)_{\lambda\in \Lambda}$
is considered, which then certainly also provides a bound for the error of best $N$-term
approximation.
Typically, the asymptotics of this error are studied in terms of the $\ell_p$-norms of the coefficient sequences
of $f$ for small values of $p$. Indeed, it is easily seen that membership of the coefficient sequence
of $f$ in an $\ell_p$ space for small $p$ implies good $N$-term approximation rates, whenever the given
representation system constitutes a frame, see, e.g., \cite{Kutyniok2010,Devore1998}.

In \cite{Don01} it was shown that the optimally achievable decay rate of the error of approximation of some
$f \in \cE^2(\RR^2)$ under the natural assumption of polynomial depth search is
\[
\norm{f - f_N}_2^2 \asymp N^{-2}, \quad \mbox{as } N \to \infty.
\]
Furthermore, it was proven in \cite{CD04} and in \cite{GL07,Kutyniok2010} that both curvelets and shearlets
attain this rate up to a log-factor. Apparently, these (parabolic) systems behave similarly concerning
sparse approximation of anisotropic features.

The next definition provides a formalization of this concept by introducing the notion of sparsity equivalence.
It is based on the connection between best $N$-term approximation rate and $\ell_p$ norms.
\begin{definition}
Let $(m_\lambda)_{\lambda\in \Lambda}$ and $(p_\mu)_{\mu \in M}$ be systems of parabolic molecules of order
$(R,M,N_1,N_2)$ and $(\tilde{R},\tilde{M},\tilde{N}_1,\tilde{N}_2)$, respectively, and let $0 < p \le 1$.
Then $(m_\lambda)_{\lambda\in \Lambda}$ and $(p_\mu)_{\mu \in M}$ are {\em sparsity equivalent in $\ell_p$},
if
\[
\left\|\left(\langle m_{\lambda},p_\mu\rangle\right)_{\lambda\in \Lambda, \mu\in \Lambda^0}\right\|_{\ell_p\to \ell_p}<\infty.
\]
\end{definition}

Intuitively, systems of parabolic molecules being in the same sparsity equivalence class should have the same
sparse approximation behavior with respect to cartoon images. The next result shows that this is indeed the
case.

\begin{prop}
\label{prop:sparsityequiv}
Let  $(m_\lambda)_{\lambda\in \Lambda}$ and $(p_\mu)_{\mu \in M}$ be systems of parabolic molecules of order
$(R,M,N_1,N_2)$ and $(\tilde{R},\tilde{M},\tilde{N}_1,\tilde{N}_2)$, respectively, which are sparsity equivalent
in $\ell_{2/3}$. If $(m_\lambda)_{\lambda\in \Lambda}$ possesses an almost best $N$-term approximation rate of
order $N^{-1+\varepsilon}$ for cartoon images for any $\varepsilon >0$, then so does $(p_\mu)_{\mu \in M}$.
\end{prop}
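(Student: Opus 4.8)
The plan is to translate the approximation-rate hypothesis into a statement about the $\ell_{2/3}$-sparsity of analysis coefficients, to transport that sparsity from the system $(m_\lambda)$ to the system $(p_\mu)$ by means of the sparsity-equivalence hypothesis, and then to translate back.

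First I would recall the standard dictionary between best $N$-term rates and $\ell_p$ quasi-norms. For a frame, the approximant $f_N$ built from the $N$ largest analysis coefficients $c_\lambda = \langle f, m_\lambda\rangle$ satisfies
\[
\|f - f_N\|_2 \;\lesssim\; \Big(\sum_{n>N} |c|_{(n)}^2\Big)^{1/2},
\]
where $|c|_{(n)}$ denotes the decreasing rearrangement of $(|c_\lambda|)$ and the implied constant is the upper frame bound of a dual frame used for reconstruction. A Stechkin-type estimate shows that $(\langle f, m_\lambda\rangle)_\lambda \in \ell_{2/3}$ forces this tail to be $O(N^{-1})$, while conversely the rate $N^{-1+\varepsilon}$ forces the analysis coefficients into $\ell_{2/3}$ up to the same $\varepsilon$-loss (that is, into weak-$\ell_{2/3}$, equivalently into $\ell_p$ for every $p>2/3$). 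Thus the hypothesis on $(m_\lambda)$ amounts, for every cartoon image $f$, to the statement that $(\langle f, m_\lambda\rangle)_\lambda$ lies in $\ell_{2/3}$ with a bound uniform over the class $\cE^2(\RR^2)$.

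Next I would set up the transfer. Using the frame expansion $f = \sum_\lambda \langle f, m_\lambda\rangle\, \tilde m_\lambda$ and testing against $p_\mu$ gives
\[
\langle f, p_\mu\rangle = \sum_{\lambda} \langle f, m_\lambda\rangle\, \langle \tilde m_\lambda, p_\mu\rangle .
\]
When $(m_\lambda)$ is a Parseval frame --- which is the case for the canonical curvelet system through which the abstract statement is applied --- one has $\tilde m_\lambda = m_\lambda$, so the matrix appearing is exactly the cross-Gramian $(\langle m_\lambda, p_\mu\rangle)$ from the definition of sparsity equivalence. Hence $(\langle f, p_\mu\rangle)_\mu$ is the image of $(\langle f, m_\lambda\rangle)_\lambda$ under the cross-Gramian operator, which by hypothesis is bounded on $\ell_{2/3}$. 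Consequently $(\langle f, p_\mu\rangle)_\mu \in \ell_{2/3}$, with $\ell_{2/3}$-norm controlled by that of the $m$-coefficients and therefore uniform over $\cE^2(\RR^2)$. Running the dictionary of the previous paragraph in reverse for $(p_\mu)$ --- now invoking its own frame property for reconstruction --- yields $\|f - f_N^{(p)}\|_2 \lesssim N^{-1+\varepsilon}$, which is the assertion.

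The main obstacle lies in the transfer step. The reproducing identity naturally produces the cross-Gramian of the dual $\tilde m$ with $p$, whereas sparsity equivalence is formulated for $m$ and $p$; this gap is harmless precisely when $(m_\lambda)$ is Parseval (the intended use transfers results from the curvelet Parseval frame), and in general must be bridged using the fact, alluded to in the remark on intrinsic localization, that the canonical dual of a system of parabolic molecules is again well localized. A secondary point requiring care is the exponent bookkeeping: since the rate $N^{-1+\varepsilon}$ corresponds to $\ell_{2/3}$ only in the weakened sense, to feed it into the fixed-exponent $\ell_{2/3}$ boundedness one either argues with the strong $\ell_{2/3}$ quasi-norm throughout or uses that the off-diagonal decay supplied by Theorem \ref{thm:almostorth} makes the cross-Gramian bounded on a full range of $\ell_p$ around $2/3$, which absorbs the $\varepsilon$.
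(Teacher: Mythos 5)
Your argument is essentially the paper's own: the paper disposes of this proposition in one line as ``a direct consequence of the definition of sparsity equivalence and standard arguments'' with a pointer to DeVore's survey, and the Stechkin-type dictionary between $N$-term rates and (weak) $\ell_{2/3}$ membership followed by transport through the cross-Gramian is exactly the standard argument being invoked. The two caveats you flag --- that the reproducing formula naturally produces the dual cross-Gramian unless the reference system is Parseval (which it is in the intended application to $\Gamma^0$), and that the $\varepsilon$-loss requires boundedness on $\ell_p$ for $p$ slightly above $2/3$ (which follows from $\ell_{2/3}$-boundedness plus uniform boundedness of the Gramian entries, via the Schur-type bound of Lemma~\ref{lem:matnorm}) --- are genuine points that the paper silently glosses over, so your account is if anything more complete than the original.
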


\begin{proof}
    This is a direct consequence of the definition of sparsity equivalence and standard arguments, see for instance
    \cite{Devore1998}.
\end{proof}

This result enables us to provide a very general class of systems of parabolic molecules which optimally sparsely
approximate cartoon images by using the known result for curvelets. For this, we first analyze when a system is
sparsity equivalent to the tight frame of bandlimited curvelets.

First, we state a simple result concerning operator norms on discrete $\ell_p$ spaces.
\begin{lemma}\label{lem:matnorm}
    Let $I,J$ be two discrete index sets, and let ${\bf A} : \ell_p(I)\to \ell_p(J)$, $p>0$ be a linear mapping
    defined by its matrix representation
    ${\bf A} = \left(A_{i,j}\right)_{i\in I ,\, j\in J}$.
    Then we have the bound
    \begin{equation*}
        \|{\bf A}\|_{\ell_p(I)\to \ell_p(J)}\le
        \max\left(\sup_{i}\sum_{j}|A_{i,j}|^{r},\sup_{j}\sum_{i}|A_{i,j}|^r\right)^{1/r},
    \end{equation*}
    where $r:=\min(1,p)$.
\end{lemma}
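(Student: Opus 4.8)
The plan is to treat the two regimes $0 < p \le 1$ and $p > 1$ separately, since the exponent $r = \min(1,p)$ switches between them, and in both cases to reduce the bound to elementary inequalities applied to the entrywise action $(\mathbf{A}x)_j = \sum_{i\in I} A_{i,j}x_i$. A useful observation to state at the outset is that only one of the two supremum terms is genuinely needed in each regime; the $\max$ together with the common exponent $1/r$ is merely a convenient device for packaging both regimes into a single formula. Throughout, the double sums involve only nonnegative terms, so the rearrangements of summation order used below are automatically justified regardless of whether the final bound is finite.

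For $0 < p \le 1$ I would exploit the $p$-subadditivity $|a+b|^p \le |a|^p + |b|^p$; here $r = p$. Writing out the norm gives
\[
\|\mathbf{A}x\|_p^p = \sum_{j\in J}\Big|\sum_{i\in I}A_{i,j}x_i\Big|^p \le \sum_{j}\sum_{i}|A_{i,j}|^p|x_i|^p = \sum_{i}|x_i|^p\sum_{j}|A_{i,j}|^p,
\]
and bounding the inner sum by $\sup_i\sum_j|A_{i,j}|^p$ yields $\|\mathbf{A}x\|_p^p \le \big(\sup_i\sum_j|A_{i,j}|^p\big)\,\|x\|_p^p$. Taking $p$-th roots, and noting that the single supremum that appears is dominated by the stated maximum, gives the claim with $r = p$. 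In this regime only the row-type sum enters.

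For $p > 1$ I would instead invoke the Schur test, most transparently through interpolation, so that $r = 1$. The endpoint operator norms are explicit: $\|\mathbf{A}\|_{\ell_1\to\ell_1} = \sup_i\sum_j|A_{i,j}|$ and $\|\mathbf{A}\|_{\ell_\infty\to\ell_\infty} = \sup_j\sum_i|A_{i,j}|$, as one checks directly from the definitions. Riesz--Thorin interpolation between these endpoints then gives
\[
\|\mathbf{A}\|_{\ell_p\to\ell_p} \le \|\mathbf{A}\|_{\ell_1\to\ell_1}^{1/p}\,\|\mathbf{A}\|_{\ell_\infty\to\ell_\infty}^{1-1/p} \le \max\big(\|\mathbf{A}\|_{\ell_1\to\ell_1},\|\mathbf{A}\|_{\ell_\infty\to\ell_\infty}\big),
\]
the final step because a weighted geometric mean of two nonnegative numbers is at most their maximum. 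Since with $r = 1$ the two quantities inside the stated $\max$ are exactly these endpoint norms, and $\max(\cdots)^{1/r} = \max(\cdots)$, this is precisely the asserted bound.

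The argument carries no genuine obstacle, and the result is classical; the only point deserving care is the bookkeeping that fuses the two regimes into the single formula $\max(\cdots)^{1/r}$ with $r = \min(1,p)$, including the consistency check at the boundary $p = 1$, where both computations collapse to $r = 1$ and the ordinary triangle inequality. I would therefore present the proof as the two short cases above, emphasizing that the stated maximum is only needed to cover both at once.
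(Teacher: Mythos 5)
Your proposal is correct and follows essentially the same route as the paper: the case $p\le 1$ via the $p$-subadditivity $|a+b|^p\le|a|^p+|b|^p$, and the case $p>1$ by computing the trivial endpoint norms at $p=1$ and $p=\infty$ and interpolating. You have simply written out in full the steps the paper leaves as "easy" and "trivial."
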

\begin{proof}
    The proof for $p<1$ follows easily using the fact that
    $$
        |a + b|^p\le |a|^p + |b|^p\quad \mbox{ for }a,b\in\mathbb{R}.
    $$
    To show the case $p\geq 1$ one only shows the assertion for $p=1,\infty$,
    which is trivial. The claim then follows by interpolation.
\end{proof}
The next theorem proves the central fact that \emph{any} system of parabolic molecules of sufficiently
high order is sparsity equivalent to the bandlimited curvelet frame from Subsection \ref{sec:curvelets}.
\begin{theorem}
\label{theo:curveletothersystem}
    Assume that $0<p\le 1$, $(\Lambda,\Phi_\Lambda)$ is a $k$-admissible parametrization,
    and $\Gamma^0 = (\gamma_\lambda)_{\lambda\in \Lambda^0}$ the tight
    frame of bandlimited curvelets. Further, assume that $(m_\lambda)_{\lambda\in \Lambda}$
    is a system of molecules associated with $\Lambda$ of order $(R,M,N_1,N_2)$
    such that
    $$
        R\geq 2\frac{k}{p},\quad  M> 4\frac{k}{p} - \frac{5}{4} ,\quad N_1 \geq 2\frac{k}{p}+\frac{3}{4} , \quad
        N_2\geq 2\frac{k}{p}.
    $$
    Then $(m_\lambda)_{\lambda\in \Lambda}$ is sparsity equivalent to $\Gamma^0$.
\end{theorem}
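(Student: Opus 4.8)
The plan is to reduce the operator-norm bound defining sparsity equivalence to a summability estimate via Lemma~\ref{lem:matnorm}, and then to control the resulting sums using the almost-orthogonality estimate of Theorem~\ref{thm:almostorth} together with the admissibility hypothesis. Concretely, set $r = \min(1,p) = p$ (since $p \le 1$) and let ${\bf A} = \left(\langle m_\lambda, \gamma_\mu\rangle\right)_{\lambda \in \Lambda,\,\mu \in \Lambda^0}$ be the cross-Gramian. By Lemma~\ref{lem:matnorm} it suffices to bound both
\[
    \sup_{\lambda \in \Lambda}\sum_{\mu \in \Lambda^0} |\langle m_\lambda, \gamma_\mu\rangle|^p
    \quad\text{and}\quad
    \sup_{\mu \in \Lambda^0}\sum_{\lambda \in \Lambda} |\langle m_\lambda, \gamma_\mu\rangle|^p.
\]
So the whole statement collapses to showing these two suprema are finite.

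First I would apply Theorem~\ref{thm:almostorth} to the pair of systems $(m_\lambda)_{\lambda\in\Lambda}$ and $\Gamma^0 = (\gamma_\mu)_{\mu\in\Lambda^0}$. The bandlimited curvelet frame is a system of parabolic molecules of arbitrary order (by the Proposition in Subsection~\ref{sec:curvelets}), so the pair has common order at least $(R,M,N_1,N_2)$. The key bookkeeping step is to choose the decay exponent $N$ in Theorem~\ref{thm:almostorth} to be $N = k/p$. With this choice, the hypotheses \eqref{eq:conds} read exactly
\[
    R \ge 2\tfrac{k}{p},\quad M > 4\tfrac{k}{p} - \tfrac{5}{4},\quad N_1 \ge 2\tfrac{k}{p} + \tfrac{3}{4},\quad N_2 \ge 2\tfrac{k}{p},
\]
which are precisely the order conditions assumed in the theorem statement. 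Hence Theorem~\ref{thm:almostorth} yields
\[
    |\langle m_\lambda, \gamma_\mu\rangle| \lesssim \omega(\lambda,\mu)^{-k/p}.
\]

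Raising this to the $p$-th power gives $|\langle m_\lambda, \gamma_\mu\rangle|^p \lesssim \omega(\lambda,\mu)^{-k}$, and now the two suprema above are dominated by
\[
    \sup_{\lambda \in \Lambda}\sum_{\mu \in \Lambda^0}\omega(\lambda,\mu)^{-k}
    \quad\text{and}\quad
    \sup_{\mu \in \Lambda^0}\sum_{\lambda \in \Lambda}\omega(\lambda,\mu)^{-k}.
\]
These are finite by exactly the two conditions in the definition of $k$-admissibility (Definition~\ref{def:kadmiss}), using the symmetry $\omega(\lambda,\mu) = \omega(\mu,\lambda)$ inherent in its definition; the first sum uses the $k$-admissibility of $\Lambda$ against $\Lambda^0$, and the second uses admissibility in the reversed role. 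Feeding these into Lemma~\ref{lem:matnorm} bounds $\|{\bf A}\|_{\ell_p \to \ell_p}$ by a finite constant, which is the definition of sparsity equivalence to $\Gamma^0$, completing the argument.

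I do not expect a genuine obstacle here, since all the heavy lifting is done by Theorem~\ref{thm:almostorth}. The only point demanding care is the arithmetic matching $N = k/p$ to the order conditions and confirming that the $p$-th power converts the $\omega^{-k/p}$ decay into the $\omega^{-k}$ decay required by admissibility; the interplay between the exponent $p$ (through $r = \min(1,p)$) and the admissibility index $k$ is the one place where a sign or factor error could creep in, so I would double-check that the stated thresholds on $R, M, N_1, N_2$ are exactly what \eqref{eq:conds} demands after the substitution.
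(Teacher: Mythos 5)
Your proposal is correct and follows essentially the same route as the paper's own proof: reduce to the two suprema via Lemma \ref{lem:matnorm} with $r=p$, invoke Theorem \ref{thm:almostorth} with $N=k/p$ (under exactly the stated order conditions), raise to the $p$-th power to obtain $\omega(\lambda,\mu)^{-k}$ decay, and conclude by $k$-admissibility.
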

\begin{proof}
    We need to show that
    $$
        \left\|\left(\langle m_{\lambda},\gamma_\mu\rangle\right)_{\lambda\in \Lambda,
        \mu\in \Lambda^0}\right\|_{\ell_p\to \ell_p}=
        \max \left( \sup_{\mu\in\Lambda}\sum_{\lambda\in \Lambda^0}
        |\langle m_{\lambda},\gamma_\mu\rangle|^p,
        \sup_{\lambda\in\Lambda^0}\sum_{\mu\in \Lambda}
        |\langle m_{\lambda},\gamma_\mu\rangle|^p\right)^{1/p} <\infty.
    $$
    By Theorem \ref{thm:almostorth}, we have
    $$
        |\langle m_{\lambda},\gamma_\mu\rangle|\lesssim \omega(\lambda,\mu)^{-\frac{k}{p}}.
    $$
    It follows that
    \begin{eqnarray*}
        \lefteqn{\max \left( \sup_{\mu\in\Lambda}\sum_{\lambda\in \Lambda^0}
        |\langle m_{\lambda},\gamma_\mu\rangle|^p,
        \sup_{\lambda\in\Lambda^0}\sum_{\mu\in \Lambda^0}
        |\langle m_{\lambda},\gamma_\mu\rangle|^p\right)}\\
        & \lesssim &
        \max \left( \sup_{\mu\in\Lambda}\sum_{\lambda\in \Lambda^0}
        \omega(\lambda,\mu)^{-k},
        \sup_{\lambda\in\Lambda^0}\sum_{\mu\in \Lambda^0}
        \omega(\lambda,\mu)^{-k}\right)<\infty,
    \end{eqnarray*}
    due to the $k$-admissibility of the parametrization of $\Lambda$.
\end{proof}
This result in combination with Proposition \ref{prop:sparsityequiv} now leads to the main result of
this subsection.
\begin{theorem}
\label{theo:mainsparsity}
    Assume that $(m_\lambda)_{\lambda\in \Lambda}$ is a system of parabolic molecules
    of order $(R,M,N_1,N_2)$ such that
    \begin{itemize}
    \item[(i)] $(m_\lambda)_{\lambda\in \Lambda}$ constitutes a frame for $L_2(\mathbb{R}^2)$,
    \item[(ii)] $\Lambda$ is $k$-admissible for all $k>2$,
    \item[(iii)] it holds that
        $$
            R\geq 6,\quad  M > 12 - \frac{5}{4} ,\quad N_1 \geq 6+\frac{3}{4} , \quad
            N_2\geq 6.
        $$
    \end{itemize}
    Then the frame $(m_\lambda)_{\lambda\in \Lambda}$ possesses an almost best $N$-term
    approximation rate of order $N^{-1+\varepsilon}$, $\varepsilon >0$ arbitrary
    for cartoon images.
\end{theorem}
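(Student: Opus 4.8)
The plan is to reduce Theorem~\ref{theo:mainsparsity} to the combination of Theorem~\ref{theo:curveletothersystem} and Proposition~\ref{prop:sparsityequiv}, with the bandlimited curvelet frame $\Gamma^0$ of Subsection~\ref{sec:curvelets} serving as the reference system. First I would fix $p = 2/3$ as the target summability exponent, since Proposition~\ref{prop:sparsityequiv} is phrased for $\ell_{2/3}$-sparsity equivalence and the almost-optimal $N^{-1+\varepsilon}$ rate for cartoon images corresponds to membership of curvelet coefficients in $\ell_{2/3}$ (this is the rate established for curvelets in \cite{CD04}). With $k$ allowed to be any value strictly greater than $2$ by hypothesis (ii), and $p = 2/3$, the critical quantity $k/p$ can be taken arbitrarily close to $3$ from above.

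The second step is to check that the order hypotheses (iii) are exactly what is needed to invoke Theorem~\ref{theo:curveletothersystem} with these parameters. Substituting $k/p = 3$ into the conditions of that theorem yields $R \geq 6$, $M > 12 - \tfrac54$, $N_1 \geq 6 + \tfrac34$, and $N_2 \geq 6$, which are precisely the inequalities assumed in (iii). Since (ii) guarantees $k$-admissibility for every $k > 2$, I can pick $k$ slightly larger than $2$ so that $k/p$ stays as close to $3$ as the strict inequalities demand, and the hypotheses of Theorem~\ref{theo:curveletothersystem} are met. That theorem then directly gives that $(m_\lambda)_{\lambda\in\Lambda}$ is sparsity equivalent in $\ell_{2/3}$ to the bandlimited curvelet frame $\Gamma^0$.

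The third step is to feed this into Proposition~\ref{prop:sparsityequiv}. Since the bandlimited curvelet frame $\Gamma^0$ is known to possess an almost best $N$-term approximation rate of order $N^{-1+\varepsilon}$ for cartoon images (the classical result of \cite{CD04}), and since $\Gamma^0$ is itself a system of parabolic molecules of arbitrary order, Proposition~\ref{prop:sparsityequiv} transfers this rate to any system sparsity equivalent to it in $\ell_{2/3}$. Applying it with $(m_\lambda)_{\lambda\in\Lambda}$ playing the role of the target system and $\Gamma^0$ the source then yields the claimed rate for $(m_\lambda)_{\lambda\in\Lambda}$. Here hypothesis (i), that the system forms a frame, is what makes the passage from $\ell_{2/3}$-membership of coefficients to an actual $N$-term approximation rate legitimate, as recalled in the discussion preceding the sparsity-equivalence definition.

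The only genuinely delicate point, and the step I would be most careful about, is the bookkeeping of the strict versus non-strict inequalities connecting $k$, $p$, and the molecule order: one must verify that choosing $k > 2$ and $p = 2/3$ leaves enough room so that the strict conditions in Theorem~\ref{theo:curveletothersystem} (notably $M > 4k/p - \tfrac54$) are consistent with the fixed numerical thresholds in (iii). Everything else is a clean chaining of three already-established results, so the proof itself is short: the substance lies entirely in Theorem~\ref{thm:almostorth} and its consequence Theorem~\ref{theo:curveletothersystem}, and the present statement is essentially the specialization of that machinery to $p = 2/3$ together with the known curvelet benchmark.
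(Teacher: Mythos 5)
Your proposal is correct and is essentially identical to the paper's own proof, which consists of exactly the same chain: Theorem \ref{theo:curveletothersystem} (with $p=2/3$) gives sparsity equivalence to the bandlimited curvelet frame $\Gamma^0$, Proposition \ref{prop:sparsityequiv} transfers the approximation rate, and the curvelet benchmark is the result of \cite{CD04}. The one delicate point you flag — that $k>2$ strictly forces $k/p>3$, so the non-strict thresholds in (iii) sit exactly at the limiting values $2k/p\to 6$ — is real but is inherited from the paper's own statement (whose proof is a one-line citation that does not address it); it is resolved by taking $p$ slightly above $2/3$, which still yields the rate $N^{-1+\varepsilon}$ for every $\varepsilon>0$.
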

\begin{proof}
    This follows from Proposition \ref{prop:sparsityequiv}, Theorem \ref{theo:curveletothersystem},
    and the fact, proven in \cite{CD04}, that $\Gamma^0$ provides the respective $N$-term approximation rate.
\end{proof}

We remark that condition (ii) holds in particular for the shearlet parametrization. Hence
this result allows a simple derivation of the results in \cite{GL07,Kutyniok2010}
from \cite{CD04}. In fact, Theorem \ref{theo:mainsparsity} provides a systematic way to, in particular,
prove results on sparse approximation of cartoon images.

%
\subsection{Function Spaces}
\label{sec:functionspaces}
%
Based on the concept of decomposition spaces introduced in \cite{Feichtinger1985}, Borup
and Nielsen have studied curvelet-like function spaces in \cite{Borup2007}.
We would like to apply our results to show that these spaces can be characterized
by the transform coefficients in any frame which also forms a system of parabolic molecules.
Consider the curvelet frame
$$
    \Gamma^0:=\left\{\gamma_{j,l,k}: (j,l,k) \in \Lambda^0 \right\}
$$
introduced in Subsection \ref{sec:curvelets}.
Following \cite{Borup2007} we define for $p,q,\alpha >1$
the function spaces $G_{p,q}^\alpha$ given by
the norm
\begin{equation}\label{eq:gnorm}
    \|f\|_{G_{p,q}^\alpha}:=\left(\sum_{j\geq 0,l}\left( 2^{\alpha j}
    \left(\sum_{k}|\langle
    f , \gamma_{j,l,k}\rangle|^p\right)^{1/p}\right)^q\right)^{1/q}.
\end{equation}
This definition might seem somewhat odd, since the summation with respect to
the directional parameter $l$ is done with respect to the $\ell_q$ norm.
For this reason and also for some minor technical reasons,
we study another, similar family of function spaces, namely the
spaces $S_{p,q}^\alpha$ given by the norm
\begin{equation}\label{eq:snorm}
    \|f\|_{S_{p,q}^\alpha}:=\left(\sum_{j\geq 0}\left( 2^{\alpha j}
    \left(\sum_{k,l}|\langle
    f , \gamma_{j,l,k}\rangle|^p\right)^{1/p}\right)^q\right)^{1/q}.
\end{equation}
\begin{remark}
    We would like to emphazise
    that all the results shown in this section also hold for the spaces defined
    by (\ref{eq:gnorm}), but with slightly more technical effort arising from
    the need to handle mixed Lebesgue spaces \cite{Benedek1961}.
    We also remark that the function spaces defined via (\ref{eq:snorm})
    can be interpreted as a decomposition spaces
    of the form studied in \cite{Borup2007} with a mixed Lebesgue space $Y=\ell_q \ell_p$
    (see \cite{Borup2007} for more information).
\end{remark}
For technical reasons the definition in (\ref{eq:snorm}) forces us to work with a slightly
stronger notion of admissibility than given in Definition \ref{def:kadmiss}:
\begin{definition}
    An index set $\Lambda$ with associated mapping $\Phi_\Lambda$ is called
    \emph{strongly $(k,l)$-admissible} if it is $k$-admissible and
    if
    \begin{equation*}
        \sum_{\lambda \in \Lambda_j}(1 + 2^{q}d(\mu,\lambda))^{-k}\lesssim
        2^{l(j-q)_+},
    \end{equation*}
    where
    $$
        \Lambda_j:=\left\{\lambda\in \Lambda:\; s_\lambda = j \right\}.
    $$
\end{definition}
\begin{lemma}
    The canonical parametrization $(\Lambda^0,\Phi^0)$ and the shearlet parametrization
    $(\Lambda^\sigma,\Phi^\sigma)$ are both strongly $(k,2)$-admissible for any $k>2$.
\end{lemma}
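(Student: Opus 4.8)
The plan is to verify the two requirements in the definition of strong $(k,2)$-admissibility separately. The first requirement, plain $k$-admissibility for $k>2$, has already been established for the canonical parametrization in Lemma~\ref{lem:canonadmin} and for the shearlet parametrization in the corresponding lemma above, so nothing remains to be done there. Thus the entire content of the claim lies in the second requirement, namely the scale-localized estimate
\[
    \sum_{\lambda\in\Lambda_j}\left(1+2^q d(\mu,\lambda)\right)^{-k}\lesssim 2^{2(j-q)_+}
\]
with $l=2$, uniformly in $\mu$ and $q$.

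The key observation is that this is, up to the value of the exponent, exactly what was already proven en route to the admissibility lemmas: estimate~\eqref{eq:cd2} for $\Lambda^0$ and estimate~\eqref{eq:cd2shear} for $\Lambda^\sigma$ both assert precisely
\[
    \sum_{\lambda\in\Lambda_j}\left(1+2^q d(\mu,\lambda)\right)^{-2}\lesssim 2^{2(j-q)_+},
\]
i.e.\ the desired bound with the exponent $2$ in place of $k$. Since $k>2$ and every summand satisfies $1+2^q d(\mu,\lambda)\ge 1$, the elementary monotonicity $(1+x)^{-k}\le(1+x)^{-2}$ for $x\ge 0$ shows that the $(-k)$-sum is dominated termwise by the $(-2)$-sum. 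Hence
\[
    \sum_{\lambda\in\Lambda_j}\left(1+2^q d(\mu,\lambda)\right)^{-k}\le\sum_{\lambda\in\Lambda_j}\left(1+2^q d(\mu,\lambda)\right)^{-2}\lesssim 2^{2(j-q)_+},
\]
which is the required estimate for both parametrizations.

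Since both ingredients are now in hand, strong $(k,2)$-admissibility follows for every $k>2$. There is no genuine obstacle here: the substantive combinatorial and geometric work---the Riemann-sum comparison with the integral $\int(1+2^q(y^2+x_2^2+|x_1-x_2 y|))^{-2}$ and the substitution reducing it to $2^{2(j-q)}$---was already carried out in proving~\eqref{eq:cd2} and~\eqref{eq:cd2shear}. The only points to check are that the exponent appearing in those auxiliary estimates, namely $2$, matches the second index $l=2$ in the notion of strong admissibility, and that passing from exponent $2$ to any larger $k$ only decreases the left-hand side; both are immediate.
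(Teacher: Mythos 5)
Your proof is correct and follows essentially the same route as the paper: the paper's own proof of this lemma simply cites the previously established estimates \eqref{eq:cd2} and \eqref{eq:cd2shear}, and you add only the (immediate) observation that for $k>2$ the sum with exponent $-k$ is dominated termwise by the sum with exponent $-2$.
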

\begin{proof}
    This has already been shown earlier in (\ref{eq:cd2}) for the canonical
    parametrization and in (\ref{eq:cd2shear}) for the shearlet parametrization.
\end{proof}
The aim of this section is to show the following theorem.
\begin{theorem}\label{thm:normequiv}
    Let $\Sigma=\left\{\sigma_\lambda:\;\lambda\in \Lambda\right\}$
    be a frame for $L_2(\mathbb{R}^2)$ with dual frame $\tilde \Sigma=
    \left\{\tilde \sigma_\lambda:\;\lambda\in \Lambda\right\}$.
    Assume further that $\Sigma$, $\tilde \Sigma$ are both parabolic molecules of arbitrary
    order with a strongly $(k,l)$ admissible parametrization for some $k,l$.
    Then the following are equivalent norms on
    $S_{p.q}^\alpha$:
    \begin{equation*}
        \|f\|_{S_{p,q}^\alpha}\sim \left(\sum_{j\geq 0}\left( 2^{\alpha j}
        \left(\sum_{\lambda\in\Lambda_j}|\langle
        f , \sigma_{\lambda}\rangle|^p\right)^{1/p}\right)^q\right)^{1/q}
        \sim
        \left(\sum_{j\geq 0}\left( 2^{\alpha j}
        \left(\sum_{\lambda\in\Lambda_j}|\langle
        f , \tilde \sigma_{\lambda}\rangle|^p\right)^{1/p}\right)^q\right)^{1/q}.
    \end{equation*}
\end{theorem}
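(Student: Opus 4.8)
The plan is to establish the norm equivalence in two stages, treating each of the two claimed equivalences by the same mechanism: a change-of-frame estimate driven by the almost-orthogonality Theorem \ref{thm:almostorth}. The core observation is that the $S_{p,q}^\alpha$ norm is defined via the canonical curvelet frame $\Gamma^0$, so for an arbitrary function $f$ I must compare the curvelet coefficients $\langle f,\gamma_\mu\rangle$ with the $\Sigma$-coefficients $\langle f,\sigma_\lambda\rangle$. Using the frame reproducing formula $f=\sum_{\mu}\langle f,\gamma_\mu\rangle\tilde\gamma_\mu$ (or, in the reverse direction, the expansion of $f$ in the $\Sigma$ system), I can write each $\langle f,\sigma_\lambda\rangle$ as a sum $\sum_\mu\langle f,\gamma_\mu\rangle\langle\tilde\gamma_\mu,\sigma_\lambda\rangle$, so that the transfer between the two coefficient sequences is implemented by the cross-Gramian matrix whose entries are inner products between two systems of parabolic molecules. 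Theorem \ref{thm:almostorth} then bounds those entries by $\omega(\lambda,\mu)^{-N}$ for $N$ as large as we like, since all systems are assumed to be molecules of arbitrary order.

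Once the coefficient change is phrased as the action of a matrix $\mathbf{A}=(\langle\tilde\gamma_\mu,\sigma_\lambda\rangle)$ with rapid off-diagonal decay, the task reduces to showing that such a matrix acts boundedly on the mixed-norm sequence space underlying the $S_{p,q}^\alpha$ norm, namely the $\ell_q\ell_p$ weighted space with the scale weight $2^{\alpha j}$ attached to each scale block $\Lambda_j$. I would isolate this as the key lemma: if $|A_{\lambda,\mu}|\lesssim\omega(\lambda,\mu)^{-N}$ for $N$ sufficiently large relative to $\alpha,p,q$, then $\mathbf{A}$ maps the weighted $\ell_q\ell_p$ space to itself. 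To prove this, I would first control the mixing across scales, exploiting the factor $2^{|s_\lambda-s_\mu|}$ inside $\omega$ so that blocks at scale $j$ and $j'$ interact with strength $2^{-N|j-j'|}$, which dominates both the scale weight ratio $2^{\alpha|j-j'|}$ and the block-size growth; here the strong $(k,l)$-admissibility is exactly what lets me sum the intra-scale contributions via the estimate $\sum_{\lambda\in\Lambda_j}(1+2^q d(\mu,\lambda))^{-k}\lesssim 2^{l(j-q)_+}$ in the spirit of the proof of Lemma \ref{lem:canonadmin}. A Schur-test style argument, iterated over the $\ell_q$ (scale) and $\ell_p$ (within-scale) layers, then yields boundedness, with the admissibility sums playing the role that Lemma \ref{lem:matnorm} plays in the plain $\ell_p$ setting.

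With the matrix boundedness in hand, the two equivalences follow symmetrically. For the first, I bound the proposed $\Sigma$-norm of $f$ by the $S_{p,q}^\alpha$ norm using $\mathbf{A}$, and conversely bound the $S_{p,q}^\alpha$ norm by the $\Sigma$-norm using the transpose-type matrix $(\langle\tilde\sigma_\lambda,\gamma_\mu\rangle)$, which has the same decay because $\tilde\Sigma$ is also a system of molecules of arbitrary order; the frame property and its dual supply the reproducing identities needed in both directions. The second equivalence, comparing the $\Sigma$-norm and the $\tilde\Sigma$-norm, is obtained by chaining: both are equivalent to the $S_{p,q}^\alpha$ norm by the argument just given, hence equivalent to each other. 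The hypothesis that both $\Sigma$ and $\tilde\Sigma$ are molecules of arbitrary order is essential precisely so that every cross-Gramian appearing in these chains has arbitrarily fast decay.

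The main obstacle I anticipate is the mixed-norm matrix boundedness lemma, rather than the almost-orthogonality input, which is supplied ready-made by Theorem \ref{thm:almostorth}. The delicacy is that the within-scale sum is not uniformly summable on its own — the number of directional indices $l$ at scale $j$ grows like $2^{\lfloor j/2\rfloor}$, and the molecules at a fixed scale are only mildly localized in the angular variable — so one cannot apply a naive Schur test scale by scale without using the geometric decay in $|s_\lambda-s_\mu|$ to absorb this growth. Getting the bookkeeping right for all three exponents $p\le q$ simultaneously, and verifying that the required $N$ is finite (so that molecules of sufficiently large but finite order would already suffice, with ``arbitrary order'' used only for convenience), is where the real work lies; the strong admissibility condition was introduced in the paper specifically to make this step go through, so I would lean on it heavily.
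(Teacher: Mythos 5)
Your proposal is correct and follows essentially the same route as the paper: expand $f$ in the curvelet frame, transfer coefficients through the cross-Gramian, and bound its scale blocks by combining Theorem \ref{thm:almostorth} with strong admissibility and the Schur-type bound of Lemma \ref{lem:matnorm}, which yields $\|{\bf A}_{i,j}\|_{\ell_p\to\ell_p}\lesssim 2^{-N|i-j|}$. The only cosmetic difference is in how the scale weight $2^{\alpha j}$ is absorbed: the paper splits the scale sum into $j\le i$ and $j>i$ and invokes the discrete Hardy inequalities (Lemma \ref{lem:hardy}), whereas you propose a weighted Schur test across scales; both succeed because the geometric decay $2^{-N|i-j|}$ dominates the weight ratio and the block growth.
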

\begin{remark}
    Of course it would be possible to show a quantitative version of Theorem \ref{thm:normequiv}
    in the sense that $\Sigma$ and $\tilde \Sigma$ are only required to
    form a system of parabolic molecules of finite, sufficiently large
    order, depending on $p,q,\alpha$.
\end{remark}
Before we start with the proof of
Theorem \ref{thm:normequiv} we recall the following result which
is a very useful inequality, sometimes called
the \emph{discrete Hardy inequality}, see \cite{Devore1988}.
To state this result we define for a sequence ${\bf a}= \left(a_k\right)_{k\in\mathbb{N}}$
the (quasi) norm
\begin{equation*}
    \|{\bf a}\|_{\ell_q^\alpha}:=\left(\sum_{k\in\mathbb{N}}\left(2^{k\alpha}|a_k|\right)^q\right)^{1/q}.
\end{equation*}
The discrete Hardy inequalities are as follows.
\begin{lemma}\label{lem:hardy}
    Assume that with $\lambda > \alpha$ and $r\le q$ we have that either
    \begin{equation}\label{eq:hardy0}
        |b_k|\lesssim 2^{-\lambda k}\left(\sum_{j=0}^k\left(2^{\lambda j}|a_j|\right)^r\right)^{1/r},
    \end{equation}
    or
    \begin{equation}\label{eq:hardy1}
        |b_k|\lesssim \left(\sum_{j=k}^\infty |a_j|^r\right)^{1/r}.
    \end{equation}
    Then we have
    \begin{equation*}
        \|{\bf b}\|_{\ell_q^\alpha}\lesssim \|{\bf a}\|_{\ell_q^\alpha}.
    \end{equation*}
\end{lemma}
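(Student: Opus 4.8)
The plan is to prove both inequalities by the same two-step scheme: first absorb the weight $2^{k\alpha}$ into the sequence, reducing to an unweighted $\ell_q$ estimate, and then control a one-sided (triangular) geometric average by Hölder's inequality followed by a geometric summation. I would begin by normalizing: set $c_j := 2^{j\alpha}|a_j|$, so that $\|(a_k)\|_{\ell_q^\alpha}^q = \sum_j c_j^q$, and the goal in both cases becomes $\sum_k (2^{k\alpha}|b_k|)^q \lesssim \sum_j c_j^q$. Inserting $|a_j| = 2^{-j\alpha}c_j$ into \eqref{eq:hardy0} and writing $\delta := \lambda - \alpha$ (which is positive by hypothesis) turns that bound into
$$2^{k\alpha}|b_k| \lesssim 2^{-\delta k}\Big(\sum_{j=0}^k 2^{\delta r j}c_j^r\Big)^{1/r},$$
while \eqref{eq:hardy1} becomes $2^{k\alpha}|b_k|\lesssim \big(\sum_{j\ge k} 2^{-\alpha r(j-k)}c_j^r\big)^{1/r}$; note that the latter presupposes $\alpha>0$, which holds in every application in this paper (there $\alpha>1$) and is in fact necessary, as a finitely supported test sequence shows.

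Next I would handle the inner geometric average. Since $r\le q$, the exponent $s:=q/r$ satisfies $s\ge 1$; let $s'$ be its conjugate. For a weighted sum $\sum_j u_j\,c_j^r$ with a one-sided geometric weight $u_j$, I split $u_j = u_j^{\theta}\,u_j^{1-\theta}$ for a suitable $\theta\in(0,1)$ and apply Hölder with exponents $(s',s)$. The factor $\big(\sum_j u_j^{\theta s'}\big)^{1/s'}$ is then a convergent geometric series, bounded by its dominant term, and the remaining factor involves $c_j^{rs}=c_j^q$. Raising the resulting estimate to the power $s=q/r$ produces a bound for $(2^{k\alpha}|b_k|)^q$ by a single geometrically weighted sum of the $c_j^q$, with the net exponent arranged so that the geometric decay in $k$ survives.

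Finally I would sum over $k$, use Tonelli to interchange the order of the double sum, and evaluate the resulting inner geometric series in $k$: it converges precisely because $\delta>0$ in the case of \eqref{eq:hardy0} and because $\alpha>0$ in the case of \eqref{eq:hardy1}, collapsing the whole expression to a constant multiple of $\sum_j c_j^q$. This yields $\|(b_k)\|_{\ell_q^\alpha}\lesssim \|(a_k)\|_{\ell_q^\alpha}$ in both cases.

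The main obstacle is the Hölder step: one must choose the splitting exponent $\theta$ so that simultaneously the auxiliary series $\sum_j u_j^{\theta s'}$ converges and the residual power of $c_j$ equals exactly $q$, and then keep careful track of the two geometric summations with the correct signs (using $\lambda>\alpha$ for the causal sum and $\alpha>0$ for the anticausal one). The degenerate case $r=q$, where $s=1$ and $s'=\infty$, is simpler and should be treated separately: Hölder is unnecessary and one interchanges the two sums directly, the inner geometric series again converging under the same sign conditions.
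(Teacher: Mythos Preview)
The paper does not actually prove this lemma: it is stated as a recalled fact (``sometimes called the \emph{discrete Hardy inequality}'') with a reference to \cite{Devore1988}, and no argument is given. So there is nothing in the paper to compare against; I can only assess your proposal on its own.

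Your argument is the standard one and is correct. After the substitution $c_j = 2^{j\alpha}|a_j|$, the H\"older step with exponents $s=q/r$ and $s'$ applied to the split $2^{\delta r j} = 2^{\delta r j/s'}\cdot 2^{\delta r j/s}$ yields
\[
\Big(\sum_{j\le k} 2^{\delta r j} c_j^r\Big)^{s}\ \lesssim\ 2^{\delta r k(s-1)}\sum_{j\le k}2^{\delta r j} c_j^q,
\]
so that $(2^{k\alpha}|b_k|)^q \lesssim 2^{-\delta r k}\sum_{j\le k}2^{\delta r j}c_j^q$; interchanging sums and using $\delta>0$ finishes \eqref{eq:hardy0}. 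The same scheme handles \eqref{eq:hardy1} with the geometric weight $2^{-(j-k)\alpha r}$, and you are right that this second case needs $\alpha>0$: a single-spike sequence $a_j=\delta_{j,J}$ shows the conclusion fails for $\alpha\le 0$. Since the paper only invokes the lemma in the setting $p,q,\alpha>1$, this unstated hypothesis is harmless here, and it is good that you flagged it. The degenerate case $r=q$ is indeed trivial (direct Fubini), as you note.
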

Observe that defining $a_k:= \left\|\left(\langle f , \gamma_\lambda \rangle \right)_{\lambda\in\Lambda_k}\right\|_p$,
we have $\|f\|_{S_{p,q}^\alpha} = \|{\bf a}\|_{\ell_q^\alpha}$.
Armed with these useful facts, we may now proceed with the proof of Theorem \ref{thm:normequiv}.
\begin{proof}[Proof of Theorem \ref{thm:normequiv}]
    We start by fixing some notation:
    $$
        {\bf f }^\Gamma := \left(\langle f , \gamma_\mu \rangle \right)_{\mu\in\Lambda^0},
        \quad
        {\bf f }^\Gamma_j := \left(\langle f , \gamma_\mu \rangle \right)_{\mu\in\Lambda^0_j},
        \quad
        {\bf f }^\Sigma := \left(\langle f , \sigma_\lambda \rangle \right)_{\lambda\in\Lambda},
        \quad
        {\bf f }^\Sigma_j := \left(\langle f , \sigma_\lambda \rangle \right)_{\lambda\in\Lambda_j}.
    $$
    Further, we write $\Gamma_j=\left\{\gamma_\mu:\;\mu\in \Lambda^0_j\right\}$ and
    similar for the systems $\Sigma,\,\tilde \Sigma$. Define
    $$
        {\bf A}:=\langle \Gamma, \Sigma\rangle,\quad {\bf A}_{i,j}
        :=\langle \Gamma_i, \Sigma_j\rangle,\quad
        {\bf \tilde A}:=\langle \tilde \Sigma, \Gamma\rangle,\quad {\bf A}_{i,j}
        :=\langle \tilde \Sigma_i, \Gamma_j\rangle.
    $$
    We have
    $$
        {\bf f}^\Sigma = \left({\bf f}^\Gamma\right)^\top {\bf A},\quad
        {\bf f}^\Sigma_i = \sum_{j\geq 0}\left({\bf f}^\Gamma_j\right)^\top {\bf A}_{i,j},\quad
        {\bf f}^\Gamma = \left({\bf f}^\Sigma\right)^\top {\bf A},\quad
        {\bf f}^\Gamma_i = \sum_{j\geq 0}\left({\bf f}^\Sigma_j\right)^\top {\bf A}_{i,j}.
    $$
    Let us first assume that $p\geq 1$. Then we would like to show that
    $$
        \left(\sum_{j\geq 0}\left( 2^{\alpha j}
        \left(\sum_{\lambda\in\Lambda_j}|\langle
        f , \sigma_{\lambda}\rangle|^p\right)^{1/p}\right)^q\right)^{1/q}<\infty,
    $$
    whenever
    $$
        \left(\sum_{j\geq 0}\left( 2^{\alpha j}
        \left(\sum_{\mu\in\Lambda^0_j}|\langle
        f , \gamma_{\mu}\rangle|^p\right)^{1/p}\right)^q\right)^{1/q}<\infty.
    $$
    For this, we obtain
    \begin{equation}\label{eq:triangle}
        b_i:=\left\|{\bf f}^\Sigma_i\right\|_p =
        \left\|\sum_{j\geq 0}\left({\bf f}^\Gamma_j\right)^\top {\bf A}_{i,j}\right\|_p
        \le \sum_{j\geq 0}\left\|\left({\bf f}^\Gamma_j\right)^\top {\bf A}_{i,j}\right\|_p
        = d_i + e_i,
    \end{equation}
    where
    \begin{equation*}
        d_i:= \sum_{j>i}\left\|\left({\bf f}^\Gamma_j\right)^\top {\bf A}_{i,j}\right\|_p
        \quad\mbox{and}\quad
        e_i:= \sum_{j\le i}\left\|\left({\bf f}^\Gamma_j\right)^\top {\bf A}_{i,j}\right\|_p.
    \end{equation*}
    Next, we will prove that the inequalities (\ref{eq:hardy0}), (\ref{eq:hardy1}) are satisfied
    for the sequences $d_i$, $e_i$, respectively. By Lemma \ref{lem:hardy}, this proves the
    desired claim.

    We start by deriving the following estimate for $d_i$:
    \begin{equation*}
        d_i \le
        \sum_{j>i}\left\|\left({\bf f}^\Gamma_j\right)\|_p\| {\bf A}_{i,j}\right\|_{\ell_p\to \ell_p}
    \end{equation*}
   To further analyze $\left\| {\bf A}_{i,j}\right\|_{\ell_p\to \ell_p}$, we employ Lemma
    \ref{lem:matnorm} to obtain
    \begin{equation}\label{eq:lpnorm}
        \left\| {\bf A}_{i,j}\right\|_{\ell_p\to \ell_p} \le \max\left(\sup_{\mu\in \Lambda^0_i}\sum_{\lambda\in\Lambda_j}
        |\langle \gamma_\mu,\sigma_\lambda\rangle|,
        \sup_{\lambda\in\Lambda_j}\sum_{\mu\in \Lambda^0_i}
        |\langle \gamma_\mu,\sigma_\lambda\rangle|
        \right)
    \end{equation}
    Using the fact that $\Gamma$ and $\Sigma$ are parabolic molecules of arbitrary order,
    Theorem \ref{thm:almostorth} implies that for $N$ arbitrary,
    \begin{equation}\label{eq:diag}
        |\langle \gamma_\mu,\sigma_\lambda\rangle|\lesssim \omega(\mu,\lambda)^{-N}.
    \end{equation}
    By (\ref{eq:diag}) and the fact that the parametrization for $\Lambda$
    is strongly admissible, we can further estimate the first term in (\ref{eq:lpnorm}) by
    \begin{equation*}
        \sup_{\mu\in \Lambda^0_i}\sum_{\lambda\in\Lambda_j}
        \omega(\mu,\lambda)^{-N} =
        2^{-N|i-j|}\sup_{\mu\in \Lambda^0_i}\sum_{\lambda\in\Lambda_j}
        \left(1 + 2^{\min(i,j)}d(\mu,\lambda)\right)^{-N}
        \lesssim 2^{-(N-l)|i-j|}.
    \end{equation*}
    The second term is treated similarly, and we wind up with
    \begin{equation}\label{eq:matrixnormdecay}
        \left\| {\bf A}_{i,j}\right\|_{\ell_p\to \ell_p}\lesssim 2^{-N|i-j|}
    \end{equation}
    for $N$ arbitrarily large.
    In particular, this implies that
    \begin{equation*}
        d_i\lesssim
        \sum_{j>i}\left\|\left({\bf f}^\Gamma_j\right)\right\|_p\lesssim
        \left(\sum_{j>i}\left\|\left({\bf f}^\Gamma_j\right)\right\|_p\right)^{1/r}
    \end{equation*}
    with $r:=\min(1,q)$, and this is (\ref{eq:hardy1}).
    Similarly we can estimate
    \begin{equation*}
        e_i \lesssim \sum_{j\le i}2^{-N(i-j)}
        \left\|\left({\bf f}^\Gamma_j\right)\right\|_p
        = 2^{-N i}\sum_{j\le i}2^{N j}
        \left\|\left({\bf f}^\Gamma_j\right)\right\|_p
        \lesssim \
        2^{-N i}\left(\sum_{j\le i}\left(2^{N j}
        \left\|\left({\bf f}^\Gamma_j\right)\right\|_p\right)^r\right)^{1/r}
    \end{equation*}
    which is (\ref{eq:hardy0}). Applying Lemma \ref{lem:hardy} yields
    $$
        \left(\sum_{j\geq 0}\left( 2^{\alpha j}
        \left(\sum_{\lambda\in\Lambda_j}|\langle
        f , \sigma_{\lambda}\rangle|^p\right)^{1/p}\right)^q\right)^{1/q}
        \lesssim
        \left(\sum_{j\geq 0}\left( 2^{\alpha j}
        \left(\sum_{\mu\in\Lambda^0_j}|\langle
        f , \gamma_{\mu}\rangle|^p\right)^{1/p}\right)^q\right)^{1/q}
    $$
    which proves one half of the desired norm equivalence. The other half (and the case
    of $\tilde \Sigma$) can be shown in exactly the same way. Therefore, for $p\geq 1$, the claim of the theorem
    is proven.

    Let us now turn to the case $p<1$. For this, we need to replace the estimate (\ref{eq:triangle})
    with
    {\allowdisplaybreaks
    \begin{eqnarray*}
        |b_i| &:= &\left\|{\bf f}^\Sigma_i\right\|_p =
        \left\|\sum_{j\geq 0}\left({\bf f}^\Gamma_j\right)^\top {\bf A}_{i,j}\right\|_p
        \\
        & \lesssim &
        \left\|\sum_{j\le i}\left({\bf f}^\Gamma_j\right)^\top {\bf A}_{i,j}\right\|_p
        +\left\|\sum_{j > i}\left({\bf f}^\Gamma_j\right)^\top {\bf A}_{i,j}\right\|_p
        \\
        &\le &
        \left(\sum_{j\le i}\left\|{\bf f}^\Gamma_j\right\|_p^p
        \left\|{\bf A}_{i,j}\right\|_{\ell_p\to \ell_p}^p\right)^{1/p} +
        \left(\sum_{j > i}\left\|{\bf f}^\Gamma_j\right\|_p^p
        \left\|{\bf A}_{i,j}\right\|_{\ell_p\to \ell_p}^p\right)^{1/p}
        \\
        &\lesssim&
        \left(\sum_{j\le i}\left\|{\bf f}^\Gamma_j\right\|_p^r
        \left\|{\bf A}_{i,j}\right\|_{\ell_p\to \ell_p}^r\right)^{1/r} +
        \left(\sum_{j > i}\left\|{\bf f}^\Gamma_j\right\|_p^r
        \left\|{\bf A}_{i,j}\right\|_{\ell_p\to \ell_p}^r\right)^{1/r}
        \\
        &=:& d_i + e_i,
    \end{eqnarray*}
    }
    where $r:=\min(p,q)$. Now we can use (\ref{eq:matrixnormdecay}) and proceed as above to
    show that the Hardy inequalities are satisfied for $d_i$ and $e_i$. Then, the application of
    Lemma \ref{lem:hardy} finishes the proof.
\end{proof}
As a corollary we can consider the shearlet frame $\Sigma$ constructed in \cite{Grohs2011a} and
briefly described in Subsection \ref{sec:shearletswithduals} and arrive at the following theorem.
\begin{theorem}\label{cor:normequiv}
    The curvelet frame $\Gamma^0$ and the shearlet frame $\Sigma$ constructed in \cite{Grohs2011a}
    span the same approximation spaces.
\end{theorem}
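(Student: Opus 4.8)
The plan is to obtain this statement as a direct corollary of the norm equivalence in Theorem \ref{thm:normequiv}, so that the entire argument reduces to verifying that the pair consisting of the curvelet frame $\Gamma^0$ and the shearlet frame $\Sigma$ of \cite{Grohs2011a} meets the hypotheses of that theorem. Recall that Theorem \ref{thm:normequiv} compares the space $S_{p,q}^\alpha$, which by definition \eqref{eq:snorm} is built from the curvelet coefficients $\langle f,\gamma_\mu\rangle$, with the spaces characterized by the coefficients of an arbitrary frame $\Sigma$ and its dual $\tilde\Sigma$. The decisive structural requirement is that \emph{both} $\Sigma$ and $\tilde\Sigma$ form systems of parabolic molecules of arbitrary order with a strongly $(k,l)$-admissible parametrization; once this is in place, the almost-orthogonality estimate of Theorem \ref{thm:almostorth} controls the Gramians $\langle \Gamma,\Sigma\rangle$ and $\langle\tilde\Sigma,\Gamma\rangle$ and yields the stated equivalence.

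The first thing I would check is the molecule hypothesis. Here I would invoke the construction recalled in Subsection \ref{sec:shearletswithduals}: the defining feature of the frame $\Sigma$ of \cite{Grohs2011a} is precisely that not only $\Sigma$ itself but also a dual frame $\tilde\Sigma$ forms a system of parabolic molecules of arbitrary order. This is exactly the property that a generic bandlimited shearlet frame fails to have, and it is the reason the refined construction of \cite{Grohs2011a} is needed rather than the plain bandlimited frame. Next I would record the admissibility of the parametrization: $\Sigma$ is associated with the shearlet parametrization $(\Lambda^\sigma,\Phi^\sigma)$, which was shown above to be strongly $(k,2)$-admissible for every $k>2$. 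Together with the fact that $\Sigma$ is a frame for $L_2(\mathbb{R}^2)$ with dual $\tilde\Sigma$, this places all the hypotheses of Theorem \ref{thm:normequiv} at our disposal.

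Applying Theorem \ref{thm:normequiv} then gives, for every admissible triple $p,q,\alpha$, the equivalence of the curvelet-based norm $\|f\|_{S_{p,q}^\alpha}$ with the norm assembled from the shearlet coefficients $\langle f,\sigma_\lambda\rangle$ (and equally with the one from $\langle f,\tilde\sigma_\lambda\rangle$). Since finiteness of either norm is thereby equivalent and the norms are comparable, the curvelet frame $\Gamma^0$ and the shearlet frame $\Sigma$ characterize the identical function space, which is the assertion that they span the same approximation spaces. I do not expect a genuine obstacle in this corollary, as Theorem \ref{thm:normequiv} is tailored to exactly this situation; the only point demanding care is the dual-frame hypothesis, and the whole purpose of the construction in \cite{Grohs2011a} was to guarantee that $\tilde\Sigma$ is again a system of parabolic molecules of arbitrary order, so that hypothesis is supplied by citation rather than by fresh computation.
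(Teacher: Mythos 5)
Your proposal is correct and matches the paper exactly: the paper states this result as a direct corollary of Theorem \ref{thm:normequiv}, relying on precisely the facts you cite, namely that the construction of \cite{Grohs2011a} guarantees both $\Sigma$ and a dual $\tilde\Sigma$ are parabolic molecules of arbitrary order and that the shearlet parametrization is strongly $(k,2)$-admissible. Your emphasis on the dual-frame hypothesis as the one point requiring the refined construction is exactly the right observation.
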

We remark that the same conclusion holds for the frames described in Subsections \ref{sec:smithframe}
and \ref{sec:borupframe}. Without proof we also mention that Theorem \ref{thm:normequiv} and Theorem
\ref{cor:normequiv} also hold for the spaces $G_{p,q}^\alpha$. The proof is similar but slightly
more technical.

We wish to stress that in fact this result for the first time proves the meta-theorem that curvelet- and shearlet
properties are equivalent.
\begin{remark}
    A similar result to Theorem \ref{cor:normequiv} has recently been shown in \cite{Labate2012}. The proofs in this
    paper only apply to bandlimited constructions which present considerably less technical difficulty.
\end{remark}
%

%
\section{Proof of Theorem \ref{thm:almostorth}}
\label{sec:proof}
%
The present sections presents the proof of our main result, namely the almost orthogonality of
any two systems of parabolic molecules of sufficient order. Since the argument is quite
involved we start by collecting some useful lemmata below in Subsection \ref{sec:estimates}
before we go on to the proof of the main result, Theorem \ref{thm:almostorth}
in Subsection \ref{sec:almostorth}.
%
\subsection{Some Estimates}\label{sec:estimates}
%
Here we collect several estimates which will turn out useful in the proof of
Theorem \ref{thm:almostorth}.
The following lemma can be found in \cite[Appendix K.1]{Grafakos2008}.
\begin{lemma}\label{lem:grafakos}
    For $N>1$ and $a,a'\in \mathbb{R}_+$, we have the inequality
    \begin{equation*}
        \int_\mathbb{R}\left(1+a|x|\right)^{-N}
        \left(1+a'|x-y|\right)^{-N}d\varphi\lesssim
        \max(a,a')^{-1}(1 + \min(a,a')|y|)^{-N}.
    \end{equation*}
\end{lemma}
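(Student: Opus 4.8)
The final statement is Lemma \ref{lem:grafakos}, a one-dimensional convolution estimate:
\[
\int_\mathbb{R}\left(1+a|x|\right)^{-N}
\left(1+a'|x-y|\right)^{-N}\,dx\lesssim
\max(a,a')^{-1}\left(1 + \min(a,a')|y|\right)^{-N},
\]
for $N>1$ and $a,a'\in\mathbb{R}_+$. Note that, despite the $d\varphi$ in the excerpt, the integration variable is clearly $x$. My plan is to reduce the problem to a normalized form and then split the line into the regions where one factor dominates the decay.

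The plan is to first use symmetry to assume $a\le a'$, so that $\max(a,a')=a'$ and $\min(a,a')=a$. Then I would rescale by substituting $x=t/a'$ (equivalently pulling out the larger rate), which turns the second factor into $(1+|t-a'y|)^{-N}$ and the first into $(1+(a/a')|t|)^{-N}$, and contributes a Jacobian factor $1/a'=\max(a,a')^{-1}$. This already produces the prefactor $\max(a,a')^{-1}$, so it remains to bound the rescaled integral by $(1+a|y|)^{-N}$ uniformly. Writing $\rho:=a/a'\in(0,1]$ and $z:=a'y$, so that $\rho z=ay$, the task becomes to show
\[
\int_\mathbb{R}\left(1+\rho|t|\right)^{-N}\left(1+|t-z|\right)^{-N}\,dt\lesssim \left(1+\rho|z|\right)^{-N}.
\]

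The key step is the region decomposition for this last integral. I would split $\mathbb{R}$ into the set where $|t-z|\le |z|/2$ (the ``peak'' of the sharply concentrated second factor) and its complement. On the peak we have $|t|\ge |z|/2$, so the first factor obeys $(1+\rho|t|)^{-N}\lesssim (1+\rho|z|/2)^{-N}\lesssim (1+\rho|z|)^{-N}$, and pulling this out leaves $\int(1+|t-z|)^{-N}\,dt$, which converges since $N>1$; this gives the desired bound on the peak. Away from the peak, where $|t-z|\ge |z|/2$, one instead bounds $(1+|t-z|)^{-N}\lesssim (1+|z|/2)^{-N}\le (1+\rho|z|)^{-N}$ (using $\rho\le1$), pulls this factor out, and is left with $\int(1+\rho|t|)^{-N}\,dt=\rho^{-1}\int(1+|s|)^{-N}\,ds\lesssim \rho^{-1}$. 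This extra factor of $\rho^{-1}$ is the delicate point and must be absorbed.

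The main obstacle is precisely that leftover $\rho^{-1}$ in the off-peak region, which is not uniformly bounded as $\rho\to 0$. I would handle it by treating the two size regimes of $z$ separately: when $\rho|z|\le 1$ the target bound $(1+\rho|z|)^{-N}$ is comparable to a constant, and the whole integral is crudely bounded by $\int(1+|t-z|)^{-N}\,dt\lesssim 1$ using only the second factor, so no $\rho^{-1}$ appears. When $\rho|z|\ge 1$, one can afford to keep only part of the decay of the first factor: on the off-peak region $|t|\ge|z|/2$ gives an extra genuine smallness $(1+\rho|t|)^{-N}\lesssim (1+\rho|z|/2)^{-(N-1)}\cdot(1+\rho|t|)^{-1}$, and the factor $(1+\rho|t|)^{-1}$ integrates against $(1+|t-z|)^{-N}$ to something like $\rho^{-1}$ times a constant, while the borrowed $(1+\rho|z|)^{-(N-1)}$ together with the remaining unit of decay reconstructs the full $(1+\rho|z|)^{-N}$ after the $\rho^{-1}$ cancels against the Jacobian already extracted. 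Organizing this bookkeeping cleanly—deciding exactly how much decay to borrow from each factor so that the powers of $\rho$ cancel and the power of $(1+\rho|z|)$ comes out to $N$—is the only real subtlety; everything else is elementary once $a\le a'$ is assumed and the line is split at $|t-z|\approx |z|$.
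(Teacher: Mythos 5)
First, note that the paper does not actually prove this lemma: it is quoted from \cite[Appendix K.1]{Grafakos2008} and used as a black box, so there is no internal proof to compare against and your argument has to stand on its own. Your reduction is fine: assuming $a\le a'$, the substitution $x=t/a'$ extracts the prefactor $\max(a,a')^{-1}$ and reduces the claim to showing $\int_{\mathbb{R}}(1+\rho|t|)^{-N}(1+|t-z|)^{-N}\,dt\lesssim(1+\rho|z|)^{-N}$ with $\rho=a/a'\le 1$ and $z=a'y$. The peak region $|t-z|\le|z|/2$ and the regime $\rho|z|\le 1$ are handled correctly.

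The gap is in the off-peak region when $\rho|z|\ge 1$, which you yourself single out as the only real subtlety. Two things go wrong there. First, the inequality you lean on, namely that ``on the off-peak region $|t|\ge|z|/2$'', is false: the off-peak set $\{|t-z|\ge|z|/2\}$ contains a full neighbourhood of $t=0$ (e.g.\ $t=0$ itself), where $(1+\rho|t|)^{-N}$ provides no decay at all, so you cannot borrow a factor $(1+\rho|z|)^{-(N-1)}$ from the first factor on that set. Second, the proposed bookkeeping --- letting the leftover $\rho^{-1}$ ``cancel against the Jacobian already extracted'' --- cannot work even in principle: the Jacobian $1/a'$ has already been spent on the prefactor $\max(a,a')^{-1}$, and absorbing an additional $\rho^{-1}=a'/a$ would turn that prefactor into $\min(a,a')^{-1}$, which is not what the lemma asserts. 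The repair is short and needs neither of these moves: on the off-peak set keep the full strength of the second factor, i.e.\ bound its contribution by $(1+|z|/2)^{-N}\int_{\mathbb{R}}(1+\rho|t|)^{-N}\,dt\lesssim\rho^{-1}(1+|z|)^{-N}$, and do \emph{not} weaken $(1+|z|)^{-N}$ to $(1+\rho|z|)^{-N}$ at this stage. Then observe that $\rho|z|\ge 1$ gives $1+\rho|z|\le 2\rho|z|\le 2\rho(1+|z|)$, hence $\rho^{-1}(1+|z|)^{-N}\le 2^{N}\rho^{N-1}(1+\rho|z|)^{-N}\le 2^{N}(1+\rho|z|)^{-N}$ because $N>1$ and $\rho\le 1$. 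With that substitution your decomposition closes and the proof is complete.
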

As a corollary we can show the next result.
\begin{lemma}\label{lem:bumps}
    Assume that $|\theta| \le \frac{\pi}{2}$ and $N>1$. Then we have for $a,a'>0$
    the inequality
    \begin{equation}\label{eq:angulardecay}
        \int_{\mathbb{T}}\left(1+a|\sin(\varphi)|\right)^{-N}
        \left(1+a'|\sin(\varphi+\theta)|\right)^{-N}d\varphi\lesssim
        \max(a,a')^{-1}(1 + \min(a,a')|\theta|)^{-N}.
    \end{equation}
\end{lemma}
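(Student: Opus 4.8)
The goal is to pass from the linear-coordinate estimate of Lemma \ref{lem:grafakos} to the angular estimate \eqref{eq:angulardecay} on the torus $\mathbb{T}$, where the Euclidean variable $x$ is replaced by $\sin(\varphi)$. The natural approach is to localize the integral near the zeros of the two sine factors and reduce each local piece to the flat situation already handled by Lemma \ref{lem:grafakos}. Concretely, the integrand is only large where $\sin(\varphi)$ or $\sin(\varphi+\theta)$ is small, i.e. near $\varphi \in \{0,\pi\}$ and near $\varphi \in \{-\theta, \pi-\theta\}$. Away from these points both factors are bounded by a negative power of $\max(a,a')$ and the corresponding part of the integral is harmless, so the whole contribution is governed by neighborhoods of these four points.

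\textbf{Key steps.} First I would split $\mathbb{T}$ into a fixed number of arcs, each of length at most (say) $\pi/2$, chosen so that on each arc at most one zero of $\sin(\varphi)$ and at most one zero of $\sin(\varphi+\theta)$ occurs; by periodicity and the hypothesis $|\theta|\le \pi/2$ this is possible with an absolute number of arcs. Second, on each such arc I would use the elementary two-sided comparison $|\sin(\psi)| \sim \mathrm{dist}(\psi,\pi\mathbb{Z})$, valid on any interval of length $\le \pi/2$ up to uniform constants, to replace $|\sin(\varphi)|$ by $|\varphi - \varphi_0|$ and $|\sin(\varphi+\theta)|$ by $|\varphi - \varphi_1|$, where $\varphi_0,\varphi_1$ are the relevant zeros. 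Since $\langle\,\cdot\,\rangle$ and the weights $(1+a|\cdot|)^{-N}$ are monotone in the magnitude of their argument, this comparison introduces only uniform constants. Third, after this substitution each local integral has exactly the form handled by Lemma \ref{lem:grafakos} with $y = \varphi_0 - \varphi_1$, and since $|\varphi_0-\varphi_1| \sim |\theta|$ (again using $|\theta|\le\pi/2$ and the angle-addition structure), Lemma \ref{lem:grafakos} yields the bound $\max(a,a')^{-1}(1+\min(a,a')|\theta|)^{-N}$ on each arc. Summing the finitely many arcs preserves the bound up to an absolute constant, giving \eqref{eq:angulardecay}.

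\textbf{Main obstacle.} The one place requiring genuine care is the comparison $|\sin(\psi)| \sim \mathrm{dist}(\psi,\pi\mathbb{Z})$ and, more importantly, the identification $|\varphi_0 - \varphi_1| \sim |\theta|$ uniformly in $\theta$. The zeros of $\varphi\mapsto\sin(\varphi+\theta)$ are shifts by $-\theta$ of the zeros of $\sin$, so on a given arc the offset between the paired zeros is exactly $\theta$ modulo $\pi$; the restriction $|\theta|\le\pi/2$ guarantees that one can select the zeros so that their separation is comparable to $|\theta|$ rather than to $\pi-|\theta|$, which is precisely what makes the decay factor $(1+\min(a,a')|\theta|)^{-N}$ come out correctly. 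Once the geometry of the zero-pairing is pinned down, the remainder is a routine invocation of Lemma \ref{lem:grafakos} followed by summation over the finitely many arcs.
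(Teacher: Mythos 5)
Your proposal is correct and follows essentially the same route as the paper: both linearize $|\sin|$ near its zeros via the comparison with the distance to $\pi\mathbb{Z}$, split $\mathbb{T}$ into finitely many arcs (the paper uses nine, according to where $\varphi$ and $\varphi+\theta$ fall), apply Lemma \ref{lem:grafakos} on each piece with shift $y=\theta+\vartheta$, $\vartheta\in\pi\mathbb{Z}$, and use $|\theta+\vartheta|\geq|\theta|$ for $|\theta|\le\pi/2$ to absorb all cases into the stated bound. The only cosmetic difference is that you speak of ``selecting'' the zero pairing with separation $\sim|\theta|$, whereas the cleaner observation (made in the paper) is that every pairing has separation at least $|\theta|$ and the Grafakos bound is decreasing in that separation, so all arcs automatically satisfy the desired estimate.
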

\begin{proof}
    For $\varphi\in
    \mathbb{T}$, we have the estimate
    $$
        |\sin(\varphi)|\geq \left\{\begin{array}{cc}
        |\varphi| & \varphi \in I_1:=\left [ -\frac{\pi}{2},\frac{\pi}{2}\right],\\
        |\varphi - \pi|&\varphi\in I_2:=\left [ \frac{\pi}{2},\pi\right],\\
        |\varphi + \pi|&\varphi\in I_3:=\left [-\pi,- \frac{\pi}{2}\right].
        \end{array}\right.
    $$
    In order to use Lemma \ref{lem:grafakos} we now split $\mathbb{T}$ into
    nine intervals depending on $\varphi + \theta, \varphi \in I_1,I_2,I_3$.
    Then the left-hand side of (\ref{eq:angulardecay}) can be estimated
    by nine terms of the form
    $$
        \int_\mathbb{\mathbb{R}}\left(1+a|\varphi|\right)^{-N}
        \left(1+a'|\varphi+\vartheta+\theta|\right)^{-N}d\varphi,
    $$
    where $\vartheta\in \left\{0,\pm\pi,\pm 2\pi \right\}$.
    By Lemma \ref{lem:grafakos} this expression can be bounded by a constant
    times
    $$
        \max(a,a')^{-1}(1 + \min(a,a')|\theta + \vartheta|)^{-N}.
    $$
    Now it remains to note that for $\vartheta\in \left\{\pm \pi,\pm 2\pi\right\}$
    and $|\theta|\le \frac{\pi}{2}$ we have $|\theta + \vartheta|\geq |\theta|$.
    This proves the lemma.
\end{proof}
Define the expression
$$
    S_{\lambda,M,N_1,N_2}(r,\varphi):=
    \min \left(1,2^{-s_\lambda}(1+r)\right)^{M}\left(1+2^{s_\lambda/2}|
        \sin(\varphi+\theta_\lambda)|\right)^{-N_2}\left(1 + 2^{-s_\lambda}r\right)^{-N_1}.
$$
The following lemma will be used in order to decouple the angular and the radial variables.
\begin{lemma}\label{lem:polarest}
    We have the estimate
    \begin{equation*}
         \min \left(1,2^{-s_\lambda }(1+r)\right)^{M}
        \left(1 + 2^{-s_\lambda}r
        \right)^{-N_1}
        \left(1 + 2^{-s_\lambda/2}r
        |\sin(\varphi + \theta_\lambda)|\right)^{-N_2} \lesssim
        S_{\lambda,M-L,N_1,L}(r,\varphi)
    \end{equation*}
    for every $0\le L \le N_2$.
\end{lemma}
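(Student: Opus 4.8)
The plan is to reduce the claimed weighted estimate to a single scalar inequality and then recover the full statement by taking powers. Write $s = s_\lambda$, $\theta = \theta_\lambda$, abbreviate $t := |\sin(\varphi + \theta_\lambda)| \in [0,1]$, and set $w := \min(1, 2^{-s}(1+r))$. Observe that $w \in (0,1]$, since $r \ge 0$ forces $2^{-s}(1+r) \ge 2^{-s} > 0$, and recall $s_\lambda \ge 0$ so that $2^{-s/2} \le 1$. Dividing the left-hand side of the asserted bound by the right-hand side $S_{\lambda,M-L,N_1,L}(r,\varphi)$, the factors $(1 + 2^{-s}r)^{-N_1}$ cancel and the two powers of $w$ combine, leaving
\[
Q := w^{L}\,(1 + 2^{s/2} t)^{L}\,(1 + 2^{-s/2} r t)^{-N_2}.
\]
Thus it suffices to prove $Q \lesssim 1$ uniformly in $\lambda$, $r$, and $\varphi$.

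The core of the argument would be the scalar \emph{base inequality}
\[
w\,(1 + 2^{s/2} t) \lesssim 1 + 2^{-s/2} r t,
\]
which I would establish by splitting according to the two regimes of the minimum. If $2^{-s}(1+r) \le 1$, then $w = 2^{-s}(1+r)$ and the left-hand side expands to $2^{-s}(1+r) + 2^{-s/2} t + 2^{-s/2} r t$; here the first summand is $\le 1$ by the case hypothesis and the second is $\le 1$ because $t \le 1$ and $2^{-s/2} \le 1$, so the total is $\le 2 + 2^{-s/2} r t \le 2(1 + 2^{-s/2} r t)$. If instead $2^{-s}(1+r) > 1$, then $w = 1$ and $r \ge 2^{s} - 1$, whence $2^{-s/2} r t \ge (2^{s/2} - 2^{-s/2}) t \ge 2^{s/2} t - 1$; rearranging gives $1 + 2^{s/2} t \le 2 + 2^{-s/2} r t \le 2(1 + 2^{-s/2} r t)$. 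In both cases the base inequality holds with constant $2$.

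To finish, I would raise the base inequality to the $L$-th power, obtaining $w^{L}(1 + 2^{s/2} t)^{L} \le 2^{L}(1 + 2^{-s/2} r t)^{L}$, and substitute into $Q$:
\[
Q \le 2^{L}\,(1 + 2^{-s/2} r t)^{L - N_2} \le 2^{L} \le 2^{N_2},
\]
where the middle inequality uses $L \le N_2$ together with $1 + 2^{-s/2} r t \ge 1$. Since $N_2$ is fixed, this bound is uniform, yielding $Q \lesssim 1$; the boundary case $L = 0$ is immediate because then $Q = (1 + 2^{-s/2} r t)^{-N_2} \le 1$.

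The computations are entirely elementary, so there is no real analytic obstacle; the only care needed is the bookkeeping of which factors cancel and the recognition that the two extreme values of the minimum must be treated separately. The conceptual content — and the reason the lemma is useful downstream — is that it trades the coupled radial--angular factor $(1 + 2^{-s/2} r t)^{-N_2}$ for the purely angular factor $(1 + 2^{s/2} t)^{-L}$, which is exactly the decoupling of the angular and radial variables exploited later. Uniformity in $s_\lambda$ and $\theta_\lambda$ is the one feature to keep an eye on, and it is automatic here since every estimate used only $s_\lambda \ge 0$ and $|\sin(\varphi+\theta_\lambda)| \le 1$.
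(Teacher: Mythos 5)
Your proof is correct and follows essentially the same route as the paper: both reduce the claim to the single scalar inequality $\min\left(1,2^{-s}(1+r)\right)\left(1+2^{s/2}|\sin(\varphi+\theta_\lambda)|\right)\lesssim 1+2^{-s/2}r|\sin(\varphi+\theta_\lambda)|$ and verify it by an elementary case analysis (the paper splits into the three ranges $r\le 1$, $1<r<2^{s}$, $r\ge 2^{s}$, while you split according to which branch of the minimum is active --- a cosmetic difference). The bookkeeping with the $L$-th power and the use of $L\le N_2$ to absorb the leftover factor is exactly as in the paper's argument.
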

\begin{proof}
    After picking $L$ we can estimate the quantity on the left hand side by
    $$
        \min \left(1,2^{-s_\lambda }(1+r)\right)^{M-L}\left(1 + 2^{-s_\lambda}r\right)^{- N_1}
        \left(\frac{\min \left(1,2^{-s_\lambda }(1+r)\right)}{1+2^{-s_\lambda/2}r|
        \sin(\varphi+\theta_\lambda)|}\right)^{L}.
    $$
    We need to show that
    \begin{equation}\label{eq:freqest_help}
        \frac{\min \left(1,2^{-s_\lambda }(1+r)\right)}{1+2^{-s_\lambda/2}r|
        \sin(\varphi+\theta_\lambda)|}\lesssim
        \left(1 + 2^{s_\lambda/2}|\sin(\varphi + \theta_\lambda)|\right)^{-1}.
    \end{equation}
    In order to prove (\ref{eq:freqest_help}), we distinguish three cases:
    \begin{itemize}
        \item {\bf $\mathbf{r\geq 2^{s_\lambda}}$: }  In this case we derive
        \begin{eqnarray*}
            \frac{\min \left(1,2^{-s_\lambda}(1+r)\right)}{1+2^{-s_\lambda/2}r|
            \sin(\varphi+\theta_\lambda)|}
            &\le &
            \frac{1}{1+2^{-s_\lambda/2}r|
            \sin(\varphi+\theta_\lambda)|}\le \frac{1}{1+2^{-s_\lambda/2}2^{s_\lambda}|
            \sin(\varphi+\theta_\lambda)|}
            \\
            &\le &
            \left(1 + 2^{s_\lambda/2}|\sin(\varphi + \theta_\lambda)|\right)^{-1}.
        \end{eqnarray*}
        \item {\bf $ \mathbf{r \le 1}$: }For $r\le 1$ we have
        \begin{eqnarray*}
            \frac{\min \left(1,2^{-s_\lambda}(1+r)\right)}{1+2^{-s_\lambda/2}r|
            \sin(\varphi+\theta_\lambda)|}&\lesssim & 2^{-s_\lambda}\lesssim
            \left(1 + 2^{s_\lambda/2}|\sin(\varphi + \theta_\lambda)|\right)^{-1}.
        \end{eqnarray*}
        \item {\bf $\mathbf{1<r<2^{s_\lambda}}$: }In this case we have
        \begin{eqnarray*}
            \frac{\min \left(1,2^{-s_\lambda}(1+r)\right)}{1+2^{-s_\lambda/2}r|
            \sin(\varphi+\theta_\lambda)|}
            &=&
            \frac{1+r}{r}\frac{1}{\frac{2^{s_\lambda}}{r} + 2^{s_\lambda/2}|\sin
            (\varphi + \theta)|}.
        \end{eqnarray*}
        Since $r>1$ we have that $\frac{1+r}{r}\le 2$ and since
        $r<2^{s_\lambda}$, we have that $\frac{2^{s_\lambda}}{r}\geq 1$. This
        proves the statement.
    \end{itemize}
\end{proof}
\begin{lemma}\label{lem:freqangdec}
    For $A,B>0$ and
    $$
        M > A-\frac{5}{4},\quad N_2\geq B,\quad N_1\geq A + 3/4,
    $$
    we have the estimate
    \begin{equation*}
        2^{-\frac{3}{4}(s_\lambda + s_{\mu})}\int_{\mathbb{R}_+}\int_{\mathbb{T}}
        S_{\lambda,M,N_1,N_2}(r,\varphi)S_{\mu,M,N_1,N_2}(r,\varphi)rdr d\varphi
        \lesssim 2^{-A\left| s_\lambda - s_{\mu}\right|}\left(
        1 + 2^{\min\left(s_\lambda,s_{\mu}\right)/2}|\theta_\lambda - \theta_{\mu}|\right)^{-B}.
    \end{equation*}
\end{lemma}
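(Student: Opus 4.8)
The plan is to exploit the product structure of the integrand to separate the radial and angular variables, to dispatch the angular part with Lemma \ref{lem:bumps}, and to reduce the radial part to an elementary one-dimensional integral whose decay is governed precisely by the exponent conditions on $M$ and $N_1$. By symmetry I would assume throughout that $s_\lambda \le s_\mu$ and abbreviate $s = s_\lambda$, $t = s_\mu$. The crucial first observation is that $S_{\lambda,M,N_1,N_2}(r,\varphi)$ factors into a purely radial part $\mathrm{rad}_\lambda(r) := \min(1,2^{-s}(1+r))^M (1+2^{-s}r)^{-N_1}$ and a purely angular part $\mathrm{ang}_\lambda(\varphi) := (1+2^{s/2}|\sin(\varphi+\theta_\lambda)|)^{-N_2}$, and likewise for the $\mu$-term. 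Hence the double integral factors as $I_{\mathrm{rad}}\cdot I_{\mathrm{ang}}$ with $I_{\mathrm{rad}} = \int_{\mathbb{R}_+}\mathrm{rad}_\lambda(r)\,\mathrm{rad}_\mu(r)\,r\,dr$ and $I_{\mathrm{ang}} = \int_{\mathbb{T}}\mathrm{ang}_\lambda(\varphi)\,\mathrm{ang}_\mu(\varphi)\,d\varphi$, and it suffices to bound each factor separately.

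For the angular integral I would substitute $\varphi\mapsto\varphi-\theta_\lambda$ and use that $|\sin|$ is $\pi$-periodic, which lets me reduce the offset $\theta_\lambda-\theta_\mu$ to a representative in $(-\pi/2,\pi/2]$ (consistent with the projective convention of the earlier remark). Applying Lemma \ref{lem:bumps} with $a = 2^{s/2}$, $a' = 2^{t/2}$ and $N = N_2$ then yields $I_{\mathrm{ang}}\lesssim 2^{-t/2}\bigl(1+2^{s/2}|\theta_\lambda-\theta_\mu|\bigr)^{-N_2}$. Since $N_2\ge B$, the angular factor is dominated by $(1+2^{\min(s_\lambda,s_\mu)/2}|\theta_\lambda-\theta_\mu|)^{-B}$, which is exactly the angular decay claimed; this step simultaneously produces the gain $2^{-t/2}=2^{-\max(s_\lambda,s_\mu)/2}$ that will be needed to balance the prefactor.

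The heart of the argument is the radial integral. I would split $\mathbb{R}_+$ into the ranges $r\lesssim 2^{s}$, $2^{s}\lesssim r\lesssim 2^{t}$ and $r\gtrsim 2^{t}$. On the first two ranges the moment factors $(2^{-s}(1+r))^M$ and $(2^{-t}(1+r))^M$ dominate, giving a contribution of size $2^{(M+2)s-Mt}$; on the last range both functions are in their decay regime, so that the factors behave like $(2^{-s}r)^{-N_1}$ and $(2^{-t}r)^{-N_1}$, giving a contribution of size $2^{sN_1+(2-N_1)t}$. Combining with the prefactor $2^{-3(s+t)/4}$ and the angular gain $2^{-t/2}$, the desired bound $2^{-A(t-s)}$ reduces to the two scalar inequalities $(M+2)s-Mt \le (A+\tfrac34)s+(\tfrac54-A)t$ and $sN_1+(2-N_1)t \le (A+\tfrac34)s+(\tfrac54-A)t$. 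Writing $t=s+\delta$ with $\delta\ge 0$, these collapse exactly to $M\ge A-\tfrac54$ and $N_1\ge A+\tfrac34$, which are the hypotheses (the strict inequality $M>A-\tfrac54$ absorbs the borderline logarithmic factor arising when $r^{M-N_1+1}$ is integrated across the middle range).

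I expect the main obstacle to be the careful bookkeeping of this radial integral rather than any conceptual difficulty. One must verify that every subregion — including the degenerate range $r\lesssim 1$, where both factors are of size $2^{-sM}$ and $2^{-tM}$, and the endpoints where $\int r^{M-N_1+1}\,dr$ may be dominated by either limit — is controlled by the very same two exponent conditions, so that the precise constants $\tfrac54$ and $\tfrac34$ in \eqref{eq:conds} emerge in the correct places. Once this is in hand, multiplying the radial and angular bounds and reinstating the prefactor completes the estimate; the case $s_\lambda > s_\mu$ follows by interchanging the roles of $\lambda$ and $\mu$.
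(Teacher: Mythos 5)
Your proposal is correct and follows essentially the same route as the paper's proof: separate the angular and radial integrals, apply Lemma \ref{lem:bumps} with $N=N_2\geq B$ to the angular factor, and split the radial integral at $2^{s_\lambda}$ and $2^{s_\mu}$, with the exponent conditions $M>A-\frac54$ and $N_1\geq A+\frac34$ emerging exactly as you compute. The only cosmetic difference is that the paper uses four radial cases (separating $r\le 1$ from $1\le r\le 2^{s_\lambda}$) where you use three, and your remark about the borderline logarithm in the middle range is a point the paper glosses over.
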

\begin{proof}
    We assume that $s_{\mu}\geq s_\lambda$ and start by showing
    the angular decay:
    By Lemma \ref{lem:bumps} and $N_2\geq B$, we have
    \begin{eqnarray*}
            2^{-\frac{3}{4}(s_\lambda + s_{\mu})}
            \int_{\mathbb{R}_+}\int_{\mathbb{T}}
            S_{\lambda,M,N_1,N_2}(r,\varphi)S_{\mu,M,N_1,N_2}(r,\varphi)rdr d\varphi
            &\lesssim &
            \mathcal{S}\cdot 2^{\frac{3}{4}(s_{\mu}-s_\lambda)}\left(1+2^{s_\lambda/2}
            |\theta_\lambda - \theta_{\mu}|\right)
            ^{-B},
    \end{eqnarray*}
    where
    \begin{equation}\label{eq:schoens}
        \mathcal{S}:=2^{-2s_{\mu} }\int_{\mathbb{R}_+}
        \min \left(1,2^{-s_\lambda}(1+r)\right)^{M}
        \min \left(1,2^{-s_{\mu}}(1+r)\right)^{M}
        \left(1 + 2^{-s_\lambda}r\right)^{-N_1}\left(1 + 2^{-s_{\mu}}r\right)^{-N_1}rdr.
    \end{equation}
    The remaining estimate $$\mathcal{S}\lesssim 2^{-(A+3/4)\left|s_\lambda - s_{\mu}\right|}$$
     is established by splitting up this integral
    into the four cases $r<1$, $1\le r < 2^{s_\lambda}$, $2^{s_\lambda}\le r < 2^{s_{\mu}}$
    and $r\geq 2^{s_{\mu}}$.
    \\
    {\it Case 1: $0\le r \le 1$}\\
    Here we only use the moment property and estimate
    \begin{eqnarray*}
        (\ref{eq:schoens})
        &\lesssim &
        2^{-2s_{\mu}}\int_0^1 2^{-s_\lambda M}2^{-s_{\mu}M} r^{2M+1}dr\\
        & \le &
        2^{-s_{\mu}(2+M)}2^{-s_\lambda M}\\
        &\le& 2^{-(A+3/4)(s_{\mu}-s_\lambda)}.
    \end{eqnarray*}
    \\
    {\it Case 2: $1\le r \le 2^{s_\lambda}$}\\
    For this case, we estimate
    \begin{eqnarray*}
        (\ref{eq:schoens})
        &\lesssim &
        2^{-2s_{\mu}}\int_1^{2^{s_\lambda}} 2^{-s_{\mu}M} r^{M}\left(2^{-s_\lambda}r\right)^{-N_1}r dr
        \\
        & = &2^{-(M+2)s_{\mu}}2^{N_1s_\lambda}\int_1^{2^{s_\lambda}}r^{M+1-N_1}dr
        \\
        &\le & 2^{-(M+2)s_{\mu}}2^{N_1s_\lambda}2^{(M+2-N_1)s_\lambda}\\
        & =&2^{-(M+2)(s_{\mu}-s_\lambda)}
        \\
        &\le & 2^{-(A+3/4)(s_{\mu}-s_\lambda)}.
    \end{eqnarray*}
    {\it Case 3: $2^{s_\lambda}\le r \le 2^{s_{\mu}}$}\\
    For this case, we estimate
    \begin{eqnarray*}
        (\ref{eq:schoens})
        &\lesssim &
        2^{-2s_{\mu}}\int_{2^{s_\lambda}}^{2^{s_{\mu}}}\left(2^{-s_{\mu}}r\right)^M
        \left(2^{-s_\lambda}r\right)^{-N_1}rdr
        \\
        &=&
        2^{-(2+M)s_{\mu}}2^{N_1s_{\lambda}}\int_{2^{s_\lambda}}^{2^{s_{\mu}}}r^{M+1-N_1}dr
        \\
        &\lesssim &
        2^{-(2+M)s_{\mu}}2^{N_1s_{\lambda}}2^{(M+2-N_1)s_{\mu}}\\
        &=&2^{-N_1(s_{\mu}-s_\lambda)}
        \\
        &\le & 2^{-(A+3/4)(s_{\mu}-s_\lambda)}.
    \end{eqnarray*}
    {\it Case 4: $2^{s_{\mu}}\le r$}\\
    For this case, we estimate
    \begin{eqnarray*}
        (\ref{eq:schoens})
        &\lesssim &
        2^{-2s_{\mu}}\int_{2^{s_{\mu}}}^\infty \left(2^{-s_\lambda}r\right)^{-N_1}
        \left(2^{-s_{\mu}}r\right)^{-N_1}rdr
        \\
        &=&
        2^{-2s_{\mu}}2^{N_1 s_{\mu}}2^{N_1s_{\lambda}}\int_{2^{s_{\mu}}}^\infty
        r^{-2N_1+1}dr
        \\
        &=&
        2^{-2s_{\mu}}2^{N_1 s_{\mu}}2^{N_1s_{\lambda}}2^{(-2N_1+2)s_{\mu}}
        \\
        &=&
        2^{-N_1(s_{\mu}-s_\lambda)}\\
        &\le& 2^{-(A+3/4)(s_{\mu}-s_\lambda)}.
    \end{eqnarray*}
    The proof is completed.
\end{proof}
%
%
\subsection{Almost Orthogonality}\label{sec:almostorth}
%
We now have all ingredients to prove our main result, which is Theorem \ref{thm:almostorth}.
\begin{proof}[Proof of Theorem \ref{thm:almostorth}]
    To keep the notation simple, we assume that $\theta_\lambda = 0$ and
    define $s_0:=\min(s_\lambda , s_{\mu})$.
    Further, we set
    $$
        \delta x := x_\lambda - x_\mu,\quad \delta \theta:= \theta_\lambda - \theta_\mu.
    $$
    By definition, we can write
    $$
        m_\lambda(\cdot) = 2^{\frac{3}{4}
         s_\lambda}a^{(\lambda)}\left(D_{2^{s_\lambda}}R_{\theta_\lambda}(\cdot - x_\lambda)\right),
        \quad
        p_\mu(\cdot) = 2^{\frac{3}{4} s_\mu}b^{(\mu)}\left(D_{2^{s_\mu}}R_{\theta_\mu}
        (\cdot - x_\mu)\right),
    $$
    where both $a^{(\lambda)}$ and $b^{(\mu)}$ satisfy (\ref{eq:molcond}).
    We have the equality
    \begin{eqnarray}\nonumber
            \left\langle m_\lambda,p_{\mu}\right\rangle
            &=&
            \left\langle \hat m_\lambda,\hat p_{\mu}\right\rangle
            =
            2^{-\frac{3}{4}(s_\lambda + s_{\mu})}\int_{\mathbb{R}^2}
            \hat a^{(\lambda)}\left(D_{2^{-s_\lambda}}R_{\theta_\lambda}\xi \right) \overline{\hat b^{(\mu)}
            \left(D_{2^{-s_\mu}}R_{\theta_\mu}\xi\right)}\exp\left(-2\pi i \xi \cdot \delta x\right)
            d\xi \nonumber \\
            \label{eq:parint1}
            &=&
            2^{-\frac{3}{4}(s_\lambda + s_{\mu})}\int_{\mathbb{R}^2}
            \mathcal{L}^k\left(\hat a^{(\lambda)}\left(D_{2^{-s_\lambda}}R_{\theta_\lambda}\xi \right) \overline{\hat b^{(\mu)}
            \left(D_{2^{-s_\mu}}R_{\theta_\mu}\xi\right)}\right)
            \mathcal{L}^{-k}\left(\exp\left(-2\pi i \xi \cdot \delta x\right)\right)
            d\xi,
    \end{eqnarray}
    where $\mathcal{L}$ is the symmetric differential operator (acting on the frequency
    variable) defined by
    $$
        \mathcal{L} :=
        I - 2^{s_0}\Delta_\xi - \frac{2^{2s_0}}{1 + 2^{s_0}|\delta \theta|^2}
        \frac{\partial^2}{\partial \xi_1^2}.
    $$
    We have
    \begin{equation}\label{eq:parint2}
        \mathcal{L}^{-k}\left(\exp\left(-2\pi i \xi \cdot \delta x\right)\right)
        = \left(1 + 2^{s_0}|\delta x|^2 + \frac{2^{2s_0}}{1 + 2^{s_0}|\delta \theta|}\langle
        e_\lambda , \delta x\rangle^2\right)^{-k}
        \exp\left(-2\pi i \xi \cdot \delta x\right),
    \end{equation}
    where $e_\lambda$ denotes the unit vector pointing in the direction described by the angle
    $\theta_\lambda$.
    By Lemma \ref{lem:diffopprod} and for $k\le \frac{R}{2}$, we have the inequality
    \begin{equation*}
        \mathcal{L}^k\left(\hat a^{(\lambda)}\left(D_{2^{-s_\lambda}}R_{\theta_\lambda}\xi \right)
         \overline{\hat b^{(\mu)}
        \left(D_{2^{-s_\mu}}R_{\theta_\mu}\xi\right)}\right)
        \lesssim S_{\lambda,M-N_2,N_1,N_2}(\xi)S_{\mu,M-N_2,N_1,N_2}(\xi).
    \end{equation*}
    Then, by (\ref{eq:parint1}) and (\ref{eq:parint2}) it follows that
    \begin{equation*}
        \left|\langle m_\lambda , p_\mu \rangle \right|
        \lesssim  2^{-\frac{3}{4}(s_\lambda + s_{\mu})}\hspace*{-0.2cm}\int_{\mathbb{R}^2}
        S_{\lambda,M-N_2,N_1,N_2}(\xi)S_{\mu,M-N_2,N_1,N_2}(\xi)d\xi
        \left(1 + 2^{s_0}|\delta x|^2 + \frac{2^{2s_0}}{1 + 2^{s_0}|\delta \theta|}\langle
        e_\lambda , \delta x\rangle^2\right)^{-k}
    \end{equation*}
    for all $k\le \frac{R}{2}$.
    Now we can use Lemma \ref{lem:freqangdec} and the fact that $R\geq 2N$ to establish that
    \begin{eqnarray*}
        \left|\langle m_\lambda , p_\mu \rangle \right|
        &\lesssim &
        2^{-2N|s_\lambda - s_{\mu}|}
        \left(1 + 2^{s_0}|\delta\theta|^2 \right)^{-N}
        \left(1 + 2^{s_0}|\delta x|^2 + \frac{2^{2s_0}}{1 + 2^{s_0}|\delta \theta|}\langle
        e_\lambda , \delta x\rangle^2\right)^{-N}
        \\
        &\le &
        2^{-2N|s_\lambda - s_{\mu}|}
        \left(1 + 2^{s_0}|\delta\theta|^2+
        2^{s_0}|\delta x|^2 + \frac{2^{2s_0}}{1 + 2^{s_0}|\delta \theta|}\langle
        e_\lambda , \delta x\rangle^2\right)^{-N}
        \\
        &\lesssim &
        2^{-2N|s_\lambda - s_{\mu}|}
        \left(1 + 2^{s_0}\left(|\delta \theta |^2 + |\delta x|^2+ |\langle e_\lambda ,
        \delta x\rangle|\right) \right)^{-N}
        = \omega(\lambda,\mu)^{-N}.
    \end{eqnarray*}
    The last inequality follows from the equation in the last line of the proof of \cite[Lemma 2.3]{CD02}.
    This proves the desired statement.
\end{proof}
\begin{lemma}\label{lem:diffopprod}
    Assume that (\ref{eq:conds}) holds for two systems of parabolic molecules of order
    $(R,M,N_1,N_2)$. Utilizing the notion of the proof of Theorem \ref{thm:almostorth},
    we have
    \begin{equation*}
        \mathcal{L}^k\left(\hat a^{(\lambda)}\left(D_{2^{-s_\lambda}}R_{\theta_\lambda}\xi \right)
         \overline{\hat b^{(\mu)}
        \left(D_{2^{-s_\mu}}R_{\theta_\mu}\xi\right)}\right)
        \lesssim S_{\lambda,M-N_2,N_1,N_2}(\xi)S_{\mu,M-N_2,N_1,N_2}(\xi)
    \end{equation*}
    for all $k\le R/2$.
\end{lemma}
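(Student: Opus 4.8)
The plan is to expand $\mathcal{L}^k$ by the Leibniz and chain rules and to verify that the anisotropic prefactors built into $\mathcal{L}$ precisely compensate the factors that differentiation extracts from the matrices $D_{2^{-s_\lambda}}R_{\theta_\lambda}$ and $D_{2^{-s_\mu}}R_{\theta_\mu}$. Following the normalization of the proof of Theorem \ref{thm:almostorth} I would take $\theta_\lambda = 0$, so that $\theta_\mu = -\delta\theta$, and abbreviate $g_\lambda(\xi):=\hat a^{(\lambda)}(D_{2^{-s_\lambda}}R_{\theta_\lambda}\xi)$ and $g_\mu(\xi):=\overline{\hat b^{(\mu)}(D_{2^{-s_\mu}}R_{\theta_\mu}\xi)}$; the quantity to be estimated is then $\mathcal{L}^k(g_\lambda g_\mu)$.

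First I would record the chain-rule bookkeeping. Each $\partial_{\xi_1}$ applied to $g_\lambda$ produces a factor $2^{-s_\lambda}$ and each $\partial_{\xi_2}$ a factor $2^{-s_\lambda/2}$ (the two diagonal entries of $D_{2^{-s_\lambda}}$). For $g_\mu$, whose preferred direction is tilted by $\delta\theta$, a $\partial_{\xi_1}$ extracts the competing factors $2^{-s_\mu}$ and $2^{-s_\mu/2}|\sin(\delta\theta)| \lesssim 2^{-s_\mu/2}|\delta\theta|$, while a $\partial_{\xi_2}$ is controlled by $2^{-s_\mu/2}$. Since (\ref{eq:molcond}) is imposed on all derivatives of order $\le R$, every $\partial^\gamma \hat a^{(\lambda)}$ and $\partial^\gamma\hat b^{(\mu)}$ with $|\gamma|\le R$ satisfies the same decay estimate as the function itself; combining this with the polar bound (\ref{eq:moldecaypolar}) and Lemma \ref{lem:polarest} applied with $L = N_2$ yields $|\partial^\alpha g_\lambda| \lesssim c_\lambda(\alpha)\,S_{\lambda,M-N_2,N_1,N_2}$ and $|\partial^\gamma g_\mu| \lesssim c_\mu(\gamma)\,S_{\mu,M-N_2,N_1,N_2}$ for $|\alpha|,|\gamma|\le R$, where $c_\lambda$, $c_\mu$ are the chain factors just described. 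The passage through Lemma \ref{lem:polarest} is what trades $N_2$ powers of the moment factor for the clean, $r$-independent angular weight appearing in $S_{\cdot,M-N_2,N_1,N_2}$, and is essential for later decoupling the radial and angular integrations.

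The core of the argument is then coefficient bookkeeping. Expanding $\mathcal{L}^k(g_\lambda g_\mu)$ gives a finite sum of terms $(\text{prefactor})\,(\partial^\alpha g_\lambda)(\partial^\gamma g_\mu)$ with $|\alpha|+|\gamma|\le 2k\le R$, the prefactors being products of the elementary coefficients $1$, $2^{s_0}$ (with $\partial_{\xi_1}^2$ or $\partial_{\xi_2}^2$) and $\frac{2^{2s_0}}{1+2^{s_0}|\delta\theta|^2}$ (with $\partial_{\xi_1}^2$) occurring in $\mathcal{L}$. Because each derivative is generated by exactly one such elementary factor, it suffices to check that every elementary factor, multiplied by the chain factors of the one or two derivatives it produces (distributed in any way over $g_\lambda$ and $g_\mu$), is $\lesssim 1$. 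The benign cases all reduce to $2^{s_0}\le\min(2^{s_\lambda},2^{s_\mu})$ together with the boundedness of $|\delta\theta|$ on $\mathbb{T}$. The one genuinely delicate case --- and the main obstacle --- is a $\partial_{\xi_1}^2$ hitting $g_\mu$ twice: its chain factor is $2^{-s_\mu}|\delta\theta|^2$, and only the engineered denominator saves it, via
$$\frac{2^{2s_0}}{1+2^{s_0}|\delta\theta|^2}\,2^{-s_\mu}|\delta\theta|^2 = \frac{2^{s_0}|\delta\theta|^2}{1+2^{s_0}|\delta\theta|^2}\,2^{s_0-s_\mu}\le 2^{s_0-s_\mu}\le 1,$$
whereas the corresponding Laplacian contribution $2^{s_0}2^{-s_\mu}|\delta\theta|^2\le|\delta\theta|^2\lesssim 1$ is controlled only because $|\delta\theta|$ is bounded. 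The mixed term with one $\partial_{\xi_1}$ on $g_\lambda$ and one on $g_\mu$ is handled the same way, reducing to $\frac{u}{1+u^2}\le\frac{1}{2}$ with $u:=2^{s_0/2}|\delta\theta|$.

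Multiplying the at most $k$ bounded elementary contributions and collecting the finitely many resulting terms then gives $|\mathcal{L}^k(g_\lambda g_\mu)|\lesssim S_{\lambda,M-N_2,N_1,N_2}(\xi)\,S_{\mu,M-N_2,N_1,N_2}(\xi)$, which is the assertion. The hypothesis $k\le R/2$ enters exactly once and decisively: it guarantees that no derivative of order exceeding $R$ is ever required, so that (\ref{eq:molcond}) continues to supply the needed symbol bounds throughout.
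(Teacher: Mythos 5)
Your proposal is correct and follows essentially the same route as the paper: expand $\mathcal{L}^k$ via the product and chain rules, verify that the prefactors $2^{s_0}$ and $\frac{2^{2s_0}}{1+2^{s_0}|\delta\theta|^2}$ absorb the chain factors (with the same delicate terms isolated), and finish with Lemma \ref{lem:polarest} at $L=N_2$. The only differences are organizational — the paper iterates a one-step reduction (Lemma \ref{lem:prodrule}, lowering $R$ by $2$ per application of $\mathcal{L}$) rather than expanding $\mathcal{L}^k$ in one pass, and your bound $\frac{u}{1+u^2}\le\frac12$ for the mixed term is a slightly cleaner substitute for the paper's two-case analysis.
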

\begin{proof}
    We show that
    \begin{multline}\label{eq:product}
        \left|\mathcal{L}^k\left(\hat a^{(\lambda)}\left(D_{2^{-s_\lambda}}R_{\theta_\lambda}
        \xi \right)
         \overline{\hat b^{(\mu)}
        \left(D_{2^{-s_\mu}}R_{\theta_\mu}\xi\right)}\right)\right|
        \lesssim
        \\
        \min \left(1,2^{-s_\lambda }(1+r)\right)^{M}
        \left(1 + 2^{-s_\lambda}r
        \right)^{-N_1}
        \left(1 + 2^{-s_\lambda/2}r
        |\sin(\varphi + \theta_\lambda)|\right)^{-N_2}
        \\
        \cdot \min \left(1,2^{-s_\mu }(1+r)\right)^{M}
        \left(1 + 2^{-s_\mu}r
        \right)^{-N_1}
        \left(1 + 2^{-s_\mu/2}r
        |\sin(\varphi + \theta_\mu)|\right)^{-N_2}
    \end{multline}
    which, using Lemma \ref{lem:polarest} with $L=N_2$, implies the desired statement.
    To show  (\ref{eq:product}) we use induction in $k$, namely we show that
    if we have two functions
    $a^{(\lambda)}, b^{(\mu)}$ satisfying
    (\ref{eq:molcond}) for $R,M,N_1,N_2$, then
    the expression
    $$
        \mathcal{L}\left(\hat a^{(\lambda)}\left(D_{2^{-s_\lambda}}R_{\theta_\lambda}\xi \right)
        \overline{\hat b^{(\mu)}
        \left(D_{2^{-s_\mu}}R_{\theta_\mu}\xi\right)}\right)
    $$
    can be written as a finite linear combination of terms of the form
    $$
        \hat c^{(\lambda)}\left(D_{2^{-s_\lambda}}R_{\theta_\lambda}\xi \right) \overline{\hat
        d^{(\mu)}
        \left(D_{2^{-s_\mu}}R_{\theta_\mu}\xi\right)}
    $$
    with $c,d$ satisfying (\ref{eq:molcond}) and $R$ replaced by $R-2$, see
    Lemma \ref{lem:prodrule}.
    Iterating this argument we can establish that for $k\le R/2$
    \begin{equation}\label{eq:estt}
        \mathcal{L}^k\left(\hat a^{(\lambda)}\left(D_{2^{-s_\lambda}}R_{\theta_\lambda}\xi \right)
        \overline{\hat b^{(\mu)}
        \left(D_{2^{-s_\mu}}R_{\theta_\mu}\xi\right)}\right)
    \end{equation}
    can be expressed as a finite linear combination of terms of the form
    \begin{equation}\label{eq:product1}
        \hat c^{(\lambda)}\left(D_{2^{-s_\lambda}}R_{\theta_\lambda}\xi \right) \overline{\hat
        d^{(\mu)}
        \left(D_{2^{-s_\mu}}R_{\theta_\mu}\xi\right)}
    \end{equation}
    with
    \begin{equation}\label{eq:product2}
        \left| \hat c^{(\lambda)}(\xi)\right|
        \lesssim \min\left(1,2^{-s_\lambda} + |\xi_1| + 2^{-s_\lambda/2}|\xi_2|\right)^M
        \left\langle |\xi|\right\rangle^{-N_1} \langle \xi_2 \rangle^{-N_2},
    \end{equation}
    and an analogous estimate for $d^{(\mu)}$.
    Combining (\ref{eq:product1}) and (\ref{eq:product2}), we obtain that
    \begin{multline*}
        |(\ref{eq:estt})| \lesssim
        \\
        \min \left(1,2^{-s_\lambda}+\left|\left(D_{2^{-s_\lambda}}R_{\theta_\lambda}\xi\right) _1
        \right|
        +2^{-s_\lambda /2}\left|\left(D_{2^{-s_\lambda}}R_{\theta_\lambda}\xi \right)_2 \right|
        \right)^M
        \left\langle \left |D_{2^{-s_\lambda}}R_{\theta_\lambda}\xi\right|\right\rangle^{-N_1}
        \langle \left|\left(D_{2^{-s_\lambda}}R_{\theta_\lambda}\xi \right)_2 \right|  \rangle^{-
        N_2}
        \\
        \cdot \min \left(1,2^{-s_\mu}+\left|\left(D_{2^{-s_\mu}}R_{\theta_\mu}\xi\right) _1 \right|
        +2^{-s_\mu /2}\left|\left(D_{2^{-s_\mu}}R_{\theta_\mu}\xi \right)_2 \right|\right)^M
        \left\langle \left |D_{2^{-s_\mu}}R_{\theta_\mu}\xi\right |\right\rangle^{-N_1}
        \langle \left|\left(D_{2^{-s_\mu}}R_{\theta_\mu}\xi \right)_2 \right|  \rangle^{-N_2}.
    \end{multline*}
    Transforming this inequality into polar coordinates as in (\ref{eq:moldecaypolar}) yields (\ref{eq:product}).
    This finishes the proof.
\end{proof}
\begin{lemma}\label{lem:prodrule}
    Given two functions
    $a^{(\lambda)}, b^{(\mu)}$ satisfying
    (\ref{eq:molcond}) for $R,M,N_1,N_2$. Then
    the expression
    $$
        \mathcal{L}\left(\hat a^{(\lambda)}\left(D_{2^{-s_\lambda}}R_{\theta_\lambda}\xi \right)
        \overline{\hat b^{(\mu)}
        \left(D_{2^{-s_\mu}}R_{\theta_\mu}\xi\right)}\right)
    $$
    can be written as a finite linear combination of terms of the form
    $$
        \hat c^{(\lambda)}\left(D_{2^{-s_\lambda}}R_{\theta_\lambda}\xi \right) \overline{\hat
        d^{(\mu)}
        \left(D_{2^{-s_\mu}}R_{\theta_\mu}\xi\right)}
    $$
    with $c,d$ satisfying (\ref{eq:molcond}) for $R-2,M,N_1,N_2$.
\end{lemma}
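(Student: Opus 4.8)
The plan is to expand $\mathcal{L}(AB)$, where $A(\xi):=\hat a^{(\lambda)}(D_{2^{-s_\lambda}}R_{\theta_\lambda}\xi)$ and $B(\xi):=\overline{\hat b^{(\mu)}(D_{2^{-s_\mu}}R_{\theta_\mu}\xi)}$, by the Leibniz rule and to check that each resulting summand is again of the advertised product form with the molecule bound intact. Since $\mathcal{L}=I-2^{s_0}\Delta_\xi-\frac{2^{2s_0}}{1+2^{s_0}|\delta\theta|^2}\partial_{\xi_1}^2$ is second order, applying it to $A\cdot B$ yields the term $A\cdot B$ itself (for which $c=a$, $d=b$ serve trivially) together with terms in which one or two derivatives fall on $A$, on $B$, or are split between them. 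First I would record the chain-rule identity $\partial_{\xi_i}A=\sum_j(D_{2^{-s_\lambda}}R_{\theta_\lambda})_{ji}(\partial_j\hat a^{(\lambda)})(D_{2^{-s_\lambda}}R_{\theta_\lambda}\xi)$ and its analogue for $B$; in the normalization $\theta_\lambda=0$ of the ambient proof one has $D_{2^{-s_\lambda}}R_{\theta_\lambda}=\mathrm{diag}(2^{-s_\lambda},2^{-s_\lambda/2})$, while the entries of $D_{2^{-s_\mu}}R_{\theta_\mu}$ are bounded in modulus by $2^{-s_\mu/2}$ and carry the trigonometric factors $\cos\delta\theta,\sin\delta\theta$.

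The core of the argument is then bookkeeping of scalar prefactors. Each summand has the shape (scalar prefactor)$\times(\partial^\alpha\hat a^{(\lambda)})(D_{2^{-s_\lambda}}R_{\theta_\lambda}\xi)\times\overline{(\partial^\gamma\hat b^{(\mu)})(D_{2^{-s_\mu}}R_{\theta_\mu}\xi)}$ with $|\alpha|+|\gamma|\le 2$, the prefactor collecting the relevant coefficient of $\mathcal{L}$, the anisotropic scalings produced by the chain rule, and a product of bounded factors $\cos\delta\theta,\sin\delta\theta$. The plan is to show every such prefactor is bounded by a uniform constant independent of $\lambda,\mu$, and then to absorb it into $\hat c^{(\lambda)}:=(\text{prefactor})\,\partial^\alpha\hat a^{(\lambda)}$ while setting $\hat d^{(\mu)}:=\partial^\gamma\hat b^{(\mu)}$ (or symmetrically). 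Since $\hat a^{(\lambda)}$ obeys \eqref{eq:molcond} up to order $R$, for any $|\beta|\le R-2$ one has $|\partial^\beta\hat c^{(\lambda)}|\lesssim|\partial^{\beta+\alpha}\hat a^{(\lambda)}|$ with $|\beta+\alpha|\le R$, so $\hat c^{(\lambda)}$ (and likewise $\hat d^{(\mu)}$) inherits \eqref{eq:molcond} for $R-2,M,N_1,N_2$, exactly as claimed. For the genuinely second-order terms one checks, for instance, that $2^{s_0}$ times any second derivative of $A$ contributes at most $2^{s_0-s_\lambda}\le1$ (the extremal case being $\partial_{\xi_2}^2$), and that for split derivatives $2^{s_0}\cdot2^{-s_\lambda/2}\cdot2^{-s_\mu/2}\le1$ since $s_0=\min(s_\lambda,s_\mu)$.

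The delicate point, and the step I expect to be the main obstacle, is the term carrying the coefficient $\frac{2^{2s_0}}{1+2^{s_0}|\delta\theta|^2}$, since here the weight $2^{2s_0}$ is the largest and it interacts with the rotation in the argument of $B$. When a $\partial_{\xi_1}$ lands on $B$ it produces the entry $-2^{-s_\mu/2}\sin\delta\theta$ (besides the harmless $2^{-s_\mu}\cos\delta\theta$), so the product of two such, or of this entry with one $\partial_{\xi_1}$ on $A$, yields prefactors such as $\frac{2^{2s_0}\,2^{-s_\lambda}2^{-s_\mu/2}|\sin\delta\theta|}{1+2^{s_0}|\delta\theta|^2}$. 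To control these I would substitute $u:=2^{s_0/2}|\delta\theta|$, use $|\sin\delta\theta|\le|\delta\theta|$, and invoke the elementary bound $\frac{u}{1+u^2}\le\frac{1}{2}$; this exactly absorbs the surplus power of $2^{s_0}$ and leaves a factor $2^{(3s_0-2s_\lambda-s_\mu)/2}\le1$, using $s_\lambda,s_\mu\ge s_0$. The same substitution, now with $\frac{u^2}{1+u^2}\le1$, handles the term where both $\partial_{\xi_1}$ fall on $B$ through its $\sin^2\delta\theta$ factor. This is precisely the computation for which the particular shape of $\mathcal{L}$—notably the denominator $1+2^{s_0}|\delta\theta|^2$—was designed, and once it is settled the finitely many remaining summands succumb to the same routine, completing the proof.
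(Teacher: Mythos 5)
Your proposal is correct and follows essentially the same route as the paper's proof: expand $\mathcal{L}(AB)$ by the product and chain rules, absorb the uniformly bounded scalar prefactors into derivatives of $\hat a^{(\lambda)}$ and $\hat b^{(\mu)}$ (which satisfy \eqref{eq:molcond} with $R$ lowered by the order of differentiation), and isolate the weighted $\partial_{\xi_1}^2$ term as the delicate case. Your bound via $u/(1+u^2)\le\tfrac12$ with $u=2^{s_0/2}|\delta\theta|$ is just a streamlined version of the paper's two-case estimate \eqref{eq:diffopbound}, so the arguments coincide in substance.
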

\begin{proof}
    Recall the definition
    $$
        \mathcal{L} :=
        I - 2^{s_0}\Delta_\xi - \frac{2^{2s_0}}{1 + 2^{s_0}|\delta \theta|^2}
        \frac{\partial^2}{\partial \xi_1^2}
    $$
    To show this statement we treat the three summands of the operator $\mathcal{L}$
    separately. The first part is the identity, and therefore the statement is trivial.
    To handle the second part, the frequency Laplacian
    $2^{s_0}\Delta$, we use the product rule
    \begin{equation*}
        \Delta(fg) = 2\left(\partial^{(1,0)}f\partial^{(1,0)}g
        + \partial^{(0,1)}f\partial^{(0,1)}g\right)
        + (\Delta f)g + f(\Delta g).
    \end{equation*}
    Therefore we need to estimate the derivatives of degree $1$
    and the Laplacians of the two factors in the product
    \begin{equation*}
        \hat a^{(\lambda)}\left(D_{2^{-s_\lambda}}R_{\theta_\lambda}\xi \right)
        \overline{\hat b^{(\mu)}
        \left(D_{2^{-s_\mu}}R_{\theta_\mu}\xi\right)}=:
        A(\xi)B(\xi).
    \end{equation*}
    We start with the first factor,
    $$
        A(\xi)=\hat a^{(\lambda)}\left(2^{-s_\lambda}\cos(\theta_\lambda)\xi_1
        - 2^{-s_\lambda}\sin(\theta_\lambda)\xi_2 ,
        2^{-s_\lambda/2}\sin(\theta_\lambda)\xi_1
        + 2^{-s_\lambda/2}\cos(\theta_\lambda)\xi_2\right).
    $$
    Define
    \begin{equation*}
        A_1(\xi):=\partial^{(1,0)}\hat a^{(\lambda)}\left(D_{2^{-
        s_\lambda}}R_{\theta_\lambda}\xi\right)
        \ \mbox{ and }\
        A_2(\xi):=\partial^{(0,1)}\hat a^{(\lambda)}\left(D_{2^{-
        s_\lambda}}R_{\theta_\lambda}\xi\right).
    \end{equation*}
    By definition, the functions $A_1,\ A_2$ satisfy (\ref{eq:molcond}) with
    $R$ replaced by $R-1$.
    An application of the chain rule shows that
    \begin{equation*}
        \partial^{(1,0)}A(\xi)
        =2^{-s_\lambda}\cos(\theta_\lambda)A_1(\xi)
        + 2^{-s_\lambda/2}\sin(\theta_\lambda)A_2(\xi).
    \end{equation*}
    Analogously, one can compute
    \begin{equation*}
        \partial^{(0,1)}A(\xi)
        =-2^{-s_\lambda}\sin(\theta_\lambda)A_1(\xi)
        + 2^{-s_\lambda/2}\cos(\theta_\lambda)A_2(\xi),
    \end{equation*}
    and the exact same expressions for $B$ using the obvious definitions
    for $B_1,\ B_2$.
    We get
    \begin{multline*}
        \partial^{(1,0)}A\partial^{(1,0)}B
        =2^{-s_\lambda-s_\mu}\cos(\theta_\lambda)
        \cos(\theta_\mu)A_1B_1
        +2^{-s_\lambda/2-s_\mu}\sin(\theta_\lambda)\cos(\theta_\mu)
        A_2B_1\\
        +2^{-s_\mu/2-s_\lambda}\sin(\theta_\mu)\cos(\theta_\lambda)
        A_1B_2
        + 2^{-s_\lambda/2-s_\mu/2}\sin(\theta_\lambda)
        \sin(\theta_\mu)A_2B_2.
    \end{multline*}
    It follows that
    $2^{s_0}\partial^{(1,0)}A\partial^{(1,0)}B$ can be written
    as a linear combination as claimed (recall that $s_0 = \min(s_\lambda,s_\mu))$.
    The same argument applies to
    the product $2^{s_0}\partial^{(0,1)}A\partial^{(0,1)}B$.

    It remains to consider the factor
    $$
        (\Delta A)B + A(\Delta B),
    $$
    where, for symmetry reasons, we only treat the summand
    $$
        (\Delta A)B.
    $$
    In fact, it suffices to only consider
    \begin{equation*}
        (\partial^{(2,0)}A)B
        =
        \left(2^{-2s_\lambda}\cos(\theta_\lambda)^2A_{11}
        +2^{-3s_\lambda/2+1}\sin(\theta_\lambda)
        \cos(\theta_\lambda)A_{12}
        - 2^{-s_\lambda}\sin(\theta_\lambda)^2A_{22}\right)B
    \end{equation*}
    with $A_{ij}$ defined in an obvious way, satisfying (\ref{eq:molcond})
    with $R$ replaced by $R-2$. The term $(\partial^{(2,0)}A)B$, and hence
    $(\Delta A)B$, can be handled in the same way, as can $A(\Delta B)$.
    This takes care of the term $2^{s_0}\Delta$ in the definition of
    $\mathcal{L}$.

    Finally we need to handle the last term in the definition of $\mathcal{L}$,
    namely
    $$
        \frac{2^{2s_0}}{1 + 2^{s_0}| \theta_\mu|^2}
        \frac{\partial^2}{\partial \xi_1^2}
    $$
    for $\theta_\lambda = 0$ (otherwise the second order derivative
    would be in the direction of the unit vector with angle $\theta_\lambda$ with
    obvious modifications in the proof).
    With our notation and using the product rule we need to consider terms of the form
    $$
        \left(\partial^{(2,0)}A\right) B,\quad
        \left(\partial^{(1,0)}A\right)\left(\partial^{(1,0)} B\right),\quad
        A\left(\partial^{(2,0)} B\right),
    $$
    and show that each of them, multiplied by the factor
    $\frac{2^{2s_0}}{1 + 2^{s_0}|\theta_\mu|^2}$, satisfies the desired representation.
    Let us start with
    $$
        \left(\partial^{(2,0)}A\right) B,
    $$
    which, using the fact that $\sin(\theta_\lambda) = 0$, can be written as
    $$
        2^{-2s_\lambda}A_{11}B,
    $$
    and which clearly satisfies the desired assertion.
    Now consider the expression
    $$
        \left(\partial^{(1,0)}A\right)\left(\partial^{(1,0)} B\right)
    $$
    which can be written as
    $$
        2^{-s_\lambda}2^{-s_\mu}\cos(\theta_\mu)A_1B_1
        + 2^{-s_\lambda}2^{-s_\mu/2}\sin(\theta_\mu)A_1B_2.
    $$
    The first summand in this expression clearly causes no problems.
    To handle the second term we need to show that
    \begin{equation}\label{eq:diffopbound}
        \frac{2^{2s_0}}{1 + 2^{s_0}|\theta_\mu|^2}
        2^{-s_\lambda}2^{-s_\mu/2}\sin(\theta_\mu) \lesssim 1.
    \end{equation}
    Here we have to distinguish two cases. First, assume that $|\theta_\mu|\le 2^{-s_0/2}$.
    Then we can estimate
    $$
        \sin(\theta_\mu)\lesssim 2^{-s_0/2},
    $$
    which readily yields the desired bound for (\ref{eq:diffopbound}).
    For the case $|\theta_\mu|\geq 2^{-s_0/2}$ we estimate
    $$
        \frac{2^{2s_0}}{1 + 2^{s_0}|\theta_\mu|^2}
        2^{-s_\lambda}2^{-s_\mu/2}\sin(\theta_\mu)
        \lesssim
        \frac{2^{2s_0}}{1 + 2^{s_0/2}|\theta_\mu|}
        2^{-s_0}2^{-s_0/2}|\theta_\mu|
        \le
        \frac{2^{2s_0}}{2^{s_0/2}|\theta_\mu|}
        2^{-s_0}2^{-s_0/2}|\theta_\mu| = 1
    $$
    which shows (\ref{eq:diffopbound}) also for this case.

    We are left with estimating the term
    $$
        A\left(\partial^{(2,0)} B\right)
    $$
    which can be written as
    $$
        2^{-2s_\mu}\cos(\theta_\mu)^2 A B_{11}
        + 2^{-3s_\mu/2+1}\sin(\theta_\mu)\cos(\theta_\mu)AB_{12}
        + 2^{-s_\mu}\sin(\theta_\mu)^2A B_{22}.
    $$
    The first two terms are of a form already treated, and the last term can
    be handled using the fact that $\sin(\theta_\mu)^2\le \theta_\mu^2$.
\end{proof}
%

%
\section*{Acknowledgements}
%
G.~Kutyniok would like to thank Wolfgang Dahmen, David Donoho, Wang-Q
Lim, and Pencho Petrushev for
enlightening discussions on this and related topics.
She acknowledges support by the Einstein Foundation Berlin, by
Deutsche Forschungsgemeinschaft
(DFG) Grant SPP-1324 KU 1446/13 and DFG Grant KU 1446/14, and by the
DFG Research Center {\sc Matheon}
``Mathematics for key technologies'' in Berlin.
The research of P.~Grohs was in part funded by the
European Research Council under Grant ERC Project STAHDPDE No. 247277.
\bibliographystyle{abbrv}
\bibliography{molecules}
\end{document}